\documentclass[preprint,11pt]{amsart}
\usepackage{lipsum}
\usepackage{graphicx}
\usepackage{amsmath,leftidx,amsthm}
\usepackage[all]{xy}
\usepackage{xspace}
\usepackage{amssymb}
\usepackage[latin1]{inputenc}
\usepackage{graphicx,color}
\usepackage{hyperref,fancyhdr}
\usepackage{fullpage}
\usepackage{versions}
\usepackage{textcomp,listings}
\usepackage{enumitem}
\usepackage{tikz}
\setenumerate[1]{label=(\arabic*)}

\newtheorem{theorem}{Theorem}[section]

\newtheorem{lemma}[theorem]{Lemma}
\newtheorem{cor}[theorem]{Corollary}

\theoremstyle{definition}
\newtheorem{definition}[theorem]{Definition}

\newtheorem{proposition}[theorem]{Proposition}

\theoremstyle{remark}
\newtheorem*{remark}{Remark}

\numberwithin{equation}{section}

  \renewcommand{\tilde}{\widetilde}

    \newcommand{\Ran}{\mbox{\rm Range}}
    
    \renewcommand{\phi}{\varphi}
    \newcommand{\N}{\mathcal{N}}

   \newcommand{\D}{\mathbb{D}}
      
   \newcommand{\R}{\mathbb{R}}

    \newcommand{\BS}{\mathcal{B}}

   \newcommand{\CP}{\mathbb{C}}

   \newcommand{\Z}{\mathbb{Z}}
   \renewcommand{\l}{{\rule{0in}{0.1in}l}}
 	\newfont{\caps}{cmcsc9}  
 	\newfont{\jour}{cmti9}  
\theoremstyle{remark}

\definecolor{green}{rgb}{0,0.5,0}
\definecolor{dkgreen}{rgb}{0,0.6,0}
\definecolor{gray}{rgb}{0.5,0.5,0.5}
\definecolor{mauve}{rgb}{0.58,0,0.82}
\definecolor{purple}{rgb}{0.58,0,0.62}

\renewcommand{\phi}{\varphi}

\newcommand\myov[2]{\genfrac{}{}{0pt}{}{#1}{#2}}

\newcommand{\B}{\mathbb{B}}

\def\and{{\quad\text{and}\quad}}

\begin{document}

\title
[{Fredholm Multiplication Operators}]
{Fredholm Multiplication Operators on Banach Spaces of Analytic Functions on the Open Unit Disk}
\author{Paul Bourdon}
\address{Department of Mathematics, University of Virginia,
         Charlottesville, USA}
\email{psb7p@virginia.edu}
\author{Mahsa Fatehi}
\address{Department of Mathematics, Shi.C., Islamic Azad University, Shiraz, Iran.}
\email{fatehimahsa@iau.ac.ir;  fatehimahsa@yahoo.com}
\subjclass[2020]{Primary 47A53; Secondary 47B91, 47A15, 47B01}
\thanks{{\it Keywords.}\/ Fredholm, multiplication operator, functional Banach space, Hardy spaces, weighted Bergman spaces, weighted Dirichlet spaces, Hardy-Sobolev spaces}

\begin{abstract}  We identify properties of a Banach space $\BS$ of  analytic functions on the open unit disk $\D$ in the complex plane ensuring that a multiplication operator $M_\psi: \BS \to\BS$ is Fredholm if and only if its symbol $\psi$ is bounded away from $0$ near $\partial \D$. The properties we identify are shared by a wide variety of much-studied spaces, including the Hardy spaces $H^p(\D)$, weighted Bergman spaces $A^p_\omega(\D)$, Hardy-Sobolev spaces $H^2_\beta(\D)$, the spaces $S_j^p(\D)$ of functions having $j$-th derivative in $H^p(\D)$, and the disk algebra $A$.  Thus, as a corollary, our work characterizes Fredholm multiplication operators on these spaces.   In addition,  we describe the closed, finite-codimensional subspaces of $\BS$ that are invariant under  $M_z: \BS \to \BS$ in terms of the zeros that the functions in such subspaces have in common.  We discuss connections between these subspaces and the problem of characterizing the Fredholm multiplication operators on $\BS$, and we prove that $M_z$ restricted to such a subspace is always cyclic, with a polynomial cyclic vector having degree equal to the codimension of the subspace.

\end{abstract}

\maketitle

\section{Introduction} \label{introsec}
Let $H(\D)$ denote the collection of  holomorphic functions on the open unit disk $\D$ in the complex plane $\CP$.  Let $B$ be a Banach space of functions contained in $H(\D)$ such that for each $z\in \D$ the linear functional  $f\mapsto f(z)$ is continuous on $B$.   Let $\psi\in H(\D)$ be a {\it  multiplier} of $B$, meaning that $\psi g\in B$ whenever $g\in B$.  When $\psi$ is a multiplier of $B$, we define the multiplication operator $M_\psi: B \to B$ by $M_\psi g = \psi g$ and call $\psi$ the symbol of $M_\psi$.  Note that $M_\psi$ is continuous on $B$ by the Closed-Graph Theorem.  When $\psi$ is the identity function, $\psi(z) = z$, we denote $M_\psi$ by $M_z$ and, more generally, we sometimes use the notation $M_{\psi(z)}$ for $M_\psi$.

Recall that a Fredholm operator on a Banach space $B$ is a bounded linear operator having a finite-dimensional kernel and a closed, finite-codimensional range. (We remark that finite codimensionality of the range implies the range is closed; see, e.g., \cite[Lemma 1.2.7]{MMM}.) In the late 1960s, R.G.\ Douglas characterized Fredholm multiplication operators on the Hardy Hilbert space $H^2$ of $\D$ as a consequence of his work on Toeplitz operators (see, e.g., \cite[Corollary, p.\ 897]{RDB} or \cite[Theorem 7.36]{DBAT}).   The multipliers of the Hardy space $H^2(\D)$ are precisely the bounded analytic functions on $\D$, i.e., the functions in $H^\infty(\D)$.  For $\psi \in H^\infty(\D)$, Douglas showed that $M_\psi: H^2(\D) \to H^2(\D)$ is Fredholm if and only if $\psi$ is bounded away from $0$ near $\partial \D$, meaning there is an $\epsilon > 0$ and a number $r$ satisfying $0 < r < 1$ such that $|\psi(z)| > \epsilon$ whenever $r < |z| < 1$.  

For multiplication operators on the Bergman Hilbert space $A^2(\B_n)$, where $\B_n$ is the open unit ball of $\CP^n$, McDonald \cite[Theorem 1.2]{McD} showed that Douglas's characterization continues to hold:  a multiplication operator on $A^2(\B_n)$ is Fredholm if and only if its symbol is bounded away from 0 near $\partial \B_n$.   Using Banach-space techniques, Axler \cite[Theorem 23]{AFC} showed that not only does this characterization hold for the Bergman spaces $A^p(\D)$, $p\ge 1$, but it also holds for  $A^p(W)$ for any open subset $W$ of $\CP$ provided $\partial W$ is replaced by  $\partial_{p\text{-}e}W$,  the``$p$-essential boundary'' of $W$.      Cao, He, and Zhu \cite[Theorem 15]{CHZ}  extended   Douglas's characterization to the Hardy-Sobolev spaces $H^2_\beta(\B_n)$, $\beta \in \R$, and this extension is the primary source of motivation for our work. 

    As Cao, He, and Zhu demonstrate \cite[Lemmas 9 and 13]{CHZ}, the problem of characterizing Fredholm multiplication operators on spaces of analytic functions  on $\B_n$ for $n > 1$ is considerably simpler than that for $n=1$ because zeros of analytic functions of several complex variables are not isolated.   Thus, we choose to focus on spaces of analytic functions  on $\B_1 = \D$.  Our goal is extend Douglas's characterization of Fredholm multiplication operators to a Banach-space setting where the spaces are defined by seven properties satisfied by all the Hardy-Sobolev spaces $H^2_\beta(\D)$,  as well as, e.g.,  the Hardy spaces $H^p(\D)$, weighted Bergman spaces $A^p_\omega(\D)$, and the derivative Hardy spaces $S_j^p(\D)$  (where $p\ge 1$ for all these examples, $\beta \in \R$, $\omega > -1$, and $j\in \Z_{>0}$; see Section~\ref{examples} for definitions of these spaces).  The first three of the seven defining properties for the Banach spaces $\BS$ to which we extend Douglas's characterization of Fredholm multiplication operators are (i) density of the polynomials, (ii) continuity of point-evaluation functionals for points in $\D$, and (iii)  $M_z: \BS\to \BS$ is bounded and has spectral radius $1$.  All the properties defining $\BS$ are described in the next section (Definition~\ref{defBS}). 
    
   Suppose that $B$ is a Banach space of  analytic functions on $\D$ such that $M_\psi$ and $M_z$ are multiplication operators on $B$. Observe that if $M_\psi$ is Fredholm, then  because $M_\psi$ and $M_z$ commute, $\Ran(M_\psi)$ is a closed, finite-codimensional invariant subspace for $M_z$.     A secondary source of motivation for our work is provided by results obtained by Axler and the first author  in \cite{AB} on  closed, finite-codimensional, multiplication by $z$ invariant subspaces of Bergman spaces of planar domains having boundary with no connected component equalling a point.   Because $\D$ is such a planar domain,  Theorem 5 and Corollary 6 of \cite{AB} show that for the Bergman spaces $A^p(\D)$, $p\ge 1$, every closed, finite-codimensional subspace invariant under $M_z$ is the range of a Fredholm multiplication operator.  In fact, Corollary 6 of \cite{AB} shows how a characterization of the closed, finite-codimensional, $M_z$-invariant subspaces of $A^p(\D)$ yields, as a corollary, that  Fredholm multiplication operators on $A^p(\D)$ have symbols that must be bounded away from $0$ near $\partial \D$. 
   
     For  Banach spaces $\BS$ to which we extend Douglas's characterization of Fredholm multiplication operators, we also characterize the closed, finite-codimensional, $M_z$-invariant subspaces, showing that these subspaces are precisely the ranges of Fredholm multiplication operators for some spaces but not for others---for the others, every closed, finite-codimensional, $M_z$-invariant subspace is either the range of a Fredholm multiplication operator (whose symbol is necessarily bounded away from $0$ near $\partial \D$) or the closure of the range of a multiplication operator whose symbol is not bounded away from $0$ near $\partial \D$.   These $M_z$-invariant subspaces are determined by the common zeros of functions within them  and are singly generated: there is a polynomial $q$ whose degree is the subspace's codimension such that the subspace is the closure of $q\BS$, so that $q$ is a cyclic vector for $M_z$ restricted to the subspace. 

   We remark that Beurling's Theorem (\cite{ABeur}) describing all closed, $M_z$-invariant subspaces of $H^2(\D)$ (in particular the finite codimensional ones) yields, as a corollary, that a multiplication operator on $H^2(\D)$ is Fredholm if and only if its symbol is a finite Blaschke product times an outer function $f$ such that both $f$ and $1/f$ belong to in $H^\infty(\D)$. Such symbols describe the multipliers of $H^2(\D)$ that are bounded away from $0$ near $\partial\D$.     We have included this remark because during the period between the publication of Beurling's Theorem in 1949 and Douglas's work on Toeplitz operators in the late 1960s, some mathematicians must have noted how Beurling's Theorem leads to Douglas's characterization of Fredholm multiplication operators on $H^2(\D)$; however,  we have been unable to find a reference from this period that uses Beurling's Theorem to characterize Fredholm multiplication operators on $H^2(\D)$.

 \section{Preliminaries}\label{PLS}

 We denote the closure of a set $S$ by $S^-$.  For each nonnegative integer $k$, let  $C^{k}(\D^-)$ denote the Banach space consisting of functions that are continuously differentiable on the closed disk $\D^-$ through order $k$, with $\|f\|_{C^k(\D^-)} = \sum_{j=0}^k \left\|f^{(j)}\right\|_\infty$.  For some of the Banach spaces of analytic functions on $\D$ that we consider, every function in the space extends to be in $C^{k}(\D^-)$ for some $k\in \Z_{\ge 0}$.   In the definition below,  we introduce the notion of  boundary-extension type  $n$, where $n$ is a nonnegative integer.  As you might guess, for spaces of boundary-extension type $0$, some functions in the space do not extend continuously to $\partial \D$ (see Remark (iv) following the definition).   

\begin{definition}\label{BETD} Let $B$ be a Banach space of functions in $H(\D)$ such that the operator of multiplication by $z$,  $M_z: B \to B$, is bounded, and such that for each $z\in \D$, the point-evaluation functional $E_z:B \to \CP$, defined by $E_z f = f(z)$, is continuous.  We  say that $B$ has {\it boundary-extension type $n$} for some positive integer $n$ provided that 
\begin{itemize} 
\item[(n.1)] $B\subset C^{n-1}(\D^-)$, and
\item[(n.2)]  for each $\zeta\in \partial \D$, the closure of the range of $M_{(z-\zeta)^{n+1}}: B\to B$ contains $f(z) = (z - \zeta)^n$.
\end{itemize}
We say that  $B$ has {\it boundary-extension type $0$} provided that 
\begin{itemize}
\item[(0.1)]  for each $\zeta\in \partial \D$, the closure of the range of $M_{z-\zeta}: B\to B$ contains $f(z) = 1$.
\end{itemize}
\end{definition}
Remarks: 
\begin{itemize}
\item[(i)] For a positive integer $n$,  $B\subset C^{n-1}(\D^-)$ means that each $f\in B$ extends to $\partial \D$ and this extension, which we will also denote by $f$, belongs to $C^{n-1}(\D^-)$.  
\item[(ii)] By the Closed-Graph Theorem, if $B\subset C^{n-1}(\D^-)$, then the identity operator from $B$ into $C^{n-1}(\D^-)$ is continuous, so that $\|f\|_{C^{n-1}(\D^-)} \le \gamma\|f\|_B$ for some positive constant $\gamma$ and all $f \in B$.  {\em Thus, if $B$ has boundary-extension type $n$ for a positive integer $n$ and $\zeta\in \partial \D$, then for every nonnegative integer $j < n$, the linear functional $E_{\zeta}^{(j)}: B\to \CP$ defined by $E_\zeta^{(j)}f = f^{(j)}(\zeta)$ is continuous.}  
\item[(iii)]  If $B$ has boundary-extension type $k$ for some positive integer $k$, then it does not have boundary-extension type $k-1$ by the following argument:  Suppose, in order to obtain a contradiction, that $B$ has  boundary-extension type $k\in \Z_{>0}$ as well as boundary-extension type $k-1$.  Because $B$ has boundary-extension type $k$, as noted in (ii) above, the linear functional  $E^{(k-1)}_\zeta$ is continuous on $B$ for each $\zeta\in \partial \D$.   Because $B$ has boundary-extension type $k-1$, for each $\zeta\in \partial \D$, there is  some sequence $(h_j)$ in $B$ such that $(M_{(z-\zeta)^{k}}h_j)$ converges in $B$ to  $f(z) = (z-\zeta)^{k-1}$, so that by the continuity of $E_{\zeta}^{(k-1)}$ on $B$, we  have  $E_{\zeta}^{(k-1)}((z-\zeta)^kh_j(z))\to (k-1)!$, implying $0=(k-1)!$, a contradiction.  
\item[(iv)] If $B$ has boundary-extension type $0$ and $\zeta\in \partial \D$, then the argument of (iii) above, with $k=1$, shows $E_\zeta$ cannot be continuous on $\BS$.  Thus, $B \not\subset C^{0}(\D^-)$.\\\end{itemize}

We now present examples of Banach spaces of analytic functions on $\D$ having boundary-extension type $0$ as well as examples having {\it  positive boundary-extension type}, which means ``having boundary-extension type $n$ for some positive integer $n$.'' For verifications that the spaces described below have the claimed extension types, see  Section~\ref{examples} below.
For $p\ge 1$, the Hardy and Bergman spaces, $H^p(\D)$ and $A^p_\omega(\D)$, respectively,  are examples of spaces having boundary-extension type $0$.  For $p\ge 1$, the spaces $S^p(\D)$ consisting of functions $f$ such that $f' \in H^p(\D)$, with $\|f\|^p_{S^p(\D)} =  |f(0)|^p + \|f'\|^p_{H^p(\D)}$ are spaces having boundary-extension type $1$.  More generally, for $p\ge 1$ and $j\in \Z_{> 0}$, the spaces $S^p_j(\D)$  consisting of functions having $j$-th derivative in $H^p(\D)$ have boundary-extension type $j$. (Note $S^p_1(\D) = S^p(\D)$.) The Hardy-Sobolev spaces $H^2_{\beta}(\D)$ (described in \cite{CHZ})  have boundary-extension type $0$ if $\beta \le 1/2$ and boundary-extension type $k\in \Z_{>0}$ if $k-1/2<\beta\le k+1/2$.  For $\beta =0$, $H^2_{\beta}(\D)$ is the Hilbert space $H^2(\D)$; for $\beta < 0$, $H^2_{\beta}(\D)$ corresponds to the weighted Bergman space $ A^2_{-1-2\beta}(\D)$; and,  for $0 < \beta \le 1/2$, $H^2_\beta(\D)$ corresponds to the weighted Dirichlet space consisting of functions having derivative in $A^2_{1 - 2\beta}(\D)$. When $\beta = k$ for some positive integer $k$, $H^2_\beta(\D)$ corresponds to $S^2_{\beta}(\D)$.

\begin{definition}\label{defBS} Let $\BS$ denote a Banach space of analytic functions on $\D$ having the following properties:
\begin{itemize}
\item[(i)]  $\BS$ contains the polynomials as a dense subspace.
\item[(ii)]  For each $z\in \D$,  the point-evaluation functional $E_z: \BS \to \CP$ is continuous.
\item[(iii)] The operator $M_z: \BS\to \BS$ is bounded and has spectral radius $1$.
\item[(iv)] If $q$ is a polynomial all of whose zeros lie in $\D$ and $qf\in \BS$ for some function $f\in H(\D)$, then $f\in \BS$.
\item[(v)] If $g$ is a multiplier of  $\BS$ that is bounded away from $0$ on $\D$, then $1/g$ is also a multiplier of $\BS$.  
\item[(vi)]  Let $l$ be a positive integer.  If $(q_k)$ is a sequence of polynomials each of degree at most $l$ and  $q_k(z) = \sum_{j=0}^l a_{j,k}z^j$ is such that for each $j\in\{0,1, \ldots, l\}$  the coefficient sequence $(a_{j,k})_{k=1}^\infty$ converges to $0$, then $(q_k)$ converges to $0$ in $\BS$.
\item[(vii)] $\BS$ has boundary-extension type $n$ for some nonnegative integer $n$.
\end{itemize}
\end{definition}

Observations.
\begin{enumerate}
\item  Via the Uniform Boundedness Principle, property (ii) implies that for each nonnegative integer $j$ and $z\in \D$, the linear functional $E_z^{(j)}: \BS\to \CP$ defined by $E_z^{(j)} f = f^{(j)}(z)$ is continuous. We sketch the proof of the preceding implication.   Suppose that $E_z$ is continuous for each $z\in \D$ and that $K$ is a compact subset of $\D$.  Then the collection of continuous linear functionals $\{E_z: z\in K\}$ is uniformly bounded in norm. Thus, if $(f_n)$ is a sequence in $\BS$ converging to $f$ in $\BS$, then $(f_n)$ converges uniformly on compact subsets of $\D$ to $f$, from which the continuity of the linear functionals $E_z^{(j)}$, for $z\in \D$ and $j\in \Z_{\ge 0}$ follows.  

\item Because $\BS$ contains all polynomials, it is infinite dimensional; also, it follows that any collection of linear functionals of the form $E^{(j)}_z$, where $z\in \D$ and $j\ge 0$ will be linearly independent as long as no two functionals are the same. Similarly, if $\BS$ has boundary-extension type $n$ for some positive integer $n$, then any collection of linear functionals whose members have the form $E^{(j)}_z$ for $z\in \D$ and $j\ge 0$ or $E_\zeta^{(j)}$ for $\zeta\in \partial \D$  and $0\le j\le n-1$ will be linearly independent as long as no two functionals are the same.

\item  Because the point-evaluation functionals $E_z$, $z\in \D$, are bounded on $\BS$, it is straightforward to apply the Closed-Graph Theorem to see that any multiplier of $\BS$ must induce a bounded multiplication operator on $\BS$. 
\item Let $\psi$ be a multiplier of $\BS$. Note that for every $a\in \D$, $M_\psi - \psi(a) I = M_{\psi - \psi(a)}$ cannot be invertible on $\BS$ because every function in its range vanishes at $a$, and there are functions in $\BS$ that do not vanish at $a$.  In other words, $\psi(a)$ belongs to the spectrum of $M_\psi$ for every $a\in \D$.  It follows that $|\psi(a)|\le \|M_\psi\|$ for every $a\in \D$.  {\em Thus, every multiplier of $\BS$ is a bounded function on $\D$.} 
\item It is easy to see that any multiplication operator on $\BS$ with nonzero symbol must be injective; thus, $M_\psi: \BS \to \BS$ is Fredholm if and only if the range of $M_\psi$ is closed and finite codimensional.   
\item Note that property (iii) implies that every polynomial is a multiplier of $\BS$. 
\item Property (v) is shown to hold for the Hardy-Sobolev spaces $H^2_\beta(\B_n)$ in Proposition 6 of \cite{CHZ}, which is described as a key result for determining the essential spectrum of a multiplication operator on $H^2_\beta(\B_n)$ (from which Douglas's characterization of Fredholm multiplication operators follows). 
\item \label{MOR} Suppose that a multiplier $h$ of $\BS$ is contained in a closed $M_z$-invariant subspace $\N$ of $\BS$; then the closure of the range of $M_h: \BS \to \BS$ is contained in $\N$.  (If $h\in \N$, then so is $hp$ for every polynomial $p$. Because the polynomials are dense in $\BS$ and $\N$ is closed, it follows that $h\BS\subseteq \N$, and thus the closure of the range of $M_h$ is contained in $\N$.)
\item  \label{p6a}  As a special case of the preceding observation and  property (i), we see that if $\BS$ has boundary-extension type $0$, then  $\Ran(M_{z-\zeta})$ is dense for every $\zeta\in \partial \D$. The converse is obvious;  thus, {\em $\BS$ has boundary-extension type $0$ if and only if  $M_{z-\zeta}: \BS\to \BS$ has dense range for all $\zeta\in \partial \D$}. 
\end{enumerate}

In Section~4, we show that the spaces of functions $H^p(\D)$, $A^p_\omega(\D)$, $H^2_\beta(\D)$, and $S^p_j(\D)$ (where $p\ge1$, $\omega > -1$, $\beta\in \R$, and $j\in \Z_{>0}$) are all examples of spaces $\BS$ satisfying  properties (i)--(vii) of Definition~\ref{defBS}. In the next section, we characterize the Fredholm multiplication operators on the spaces $\BS$, as well as the closed, finite-codimensional, $M_z$-invariant subspaces of $\BS$, and we relate the two characterizations.  

\section{Characterizing the Fredholm multiplication operators on $\BS$}

Our principal result is the following, where $\BS$ represents any Banach space satisfying the properties of Definition~\ref{defBS}.  

\begin{theorem}\label{TMT} The multiplication operator $M_\psi: \BS\to \BS$ is Fredholm if and only if $\psi$ is bounded away from $0$ near $\partial \D$.
\end{theorem}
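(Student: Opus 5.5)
For the ``if'' direction, suppose $|\psi|>\epsilon$ on $r<|z|<1$. Because $\psi$ is a bounded nonzero analytic function, it has only finitely many zeros in $\{|z|\le r\}$. Let $q$ be the polynomial with exactly these zeros, counted with multiplicity; all of its zeros lie in $\D$. Set $g=\psi/q$. Then $g$ is analytic and zero-free on $\D$, and it is bounded away from $0$ on all of $\D$: near the boundary because $|q|$ is bounded above, and on $\{|z|\le r\}$ by compactness.

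Next we show $g$ is a multiplier. For $f\in\BS$ we have $q(gf)=\psi f\in\BS$, so property (iv) gives $gf\in\BS$. Property (v) then shows $1/g$ is a multiplier, and so $M_g$ is invertible with inverse $M_{1/g}$. Since $M_\psi=M_qM_g$, it suffices to show $M_q$ is Fredholm. Factor $q$ into linear factors $z-a$ with $a\in\D$. It is enough to show each $M_{z-a}$ has closed range of codimension $1$, because a product of Fredholm operators is Fredholm. The range of $M_{z-a}$ lies in $\ker E_a$. Conversely, if $f\in\BS$ and $f(a)=0$, then $f=(z-a)h$ with $h\in H(\D)$, and property (iv) gives $h\in\BS$. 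Hence $\Ran M_{z-a}=\ker E_a$, which is closed with codimension $1$ by property (ii).

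For the ``only if'' direction, assume $M_\psi$ is Fredholm but $\psi$ is not bounded away from $0$ near $\partial\D$. Then there are points $z_k\in\D$ with $|z_k|\to1$ and $\psi(z_k)\to0$. The Fredholm property is stable under compact perturbations, and $\Ran M_\psi$ has finite codimension $m$. I plan to use the zeros to produce a contradiction, in one of two ways.

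The first option: if $\psi$ has infinitely many zeros, then $\Ran M_\psi$ is contained in the intersection of infinitely many kernels $\ker E_{z_k}^{(j)}$. These functionals are linearly independent by Observation (2), so the range has infinite codimension, a contradiction. So we may assume $\psi$ has finitely many zeros. Dividing by the corresponding polynomial $q$ as above, we get $\psi=qg$ with $g$ a zero-free multiplier (again by property (iv)), and $M_g$ is Fredholm because $M_q$ is invertible modulo finite-rank operators. In this zero-free case we still have $\inf|g|=0$ along $z_k$.

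The second option is to exploit the stability of the Fredholm index. Consider the perturbations $M_{g-\lambda}$ for small $\lambda$, choosing values $\lambda=g(w)$ with $w\in\D$ near $\partial\D$. On the set of Fredholm operators the index is locally constant, and the Fredholm operators form an open set. So $M_{g-\lambda}$ is Fredholm for $|\lambda|<\delta$, with index $0$ since $M_g$ is injective with dense range. (Dense range holds because $g$ is zero-free; the remaining nonzero-index issue still has to be handled.) But for $\lambda=g(z_k)$ with $k$ large, $M_{g-\lambda}$ has its range in $\ker E_{z_k}$, so its index is at most $-1$. This contradicts index $0$ for $\lambda$ in the connected neighborhood containing $0$. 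The same argument shows directly that $M_g$ must have index $\le -1$, which contradicts $g$ being zero-free, provided I can show that zero-free Fredholm multipliers have dense range.

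The key remaining step, and the main obstacle, is showing that $\Ran M_g$ is dense (hence all of $\BS$, hence $M_g$ invertible) when $g$ has no zeros in $\D$. I would prove this via the finite-codimensional invariant subspace $\N=\Ran M_g$. Since $M_z$ restricted to the finite-dimensional quotient $\BS/\N$ has some eigenvalue $\lambda$, there is a nonzero functional $\Lambda$ annihilating $\N$ such that $\Lambda$ kills $(z-\lambda)\BS$. For $|\lambda|<1$, the image of $\Lambda$ on polynomials forces $\Lambda$ to be a multiple of $E_\lambda$, and then $g(\lambda)=0$, a contradiction. By property (iii) we have $|\lambda|\le1$, so the remaining case is $|\lambda|=1$. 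In boundary-extension type $0$, Observation (9) rules this out by density of $(z-\zeta)\BS$. In positive type $n$, $\Lambda$ is a combination of the functionals $E_\zeta^{(j)}$, $j<n$, by property (n.2) and property (vi). Then $g$, and hence $\psi$, vanishes at $\zeta$, so $\psi$ is not bounded away from $0$ there, contrary to the Fredholm property. This requires a perturbation argument with $M_{\psi-\lambda}$ and the index, as above, to exclude boundary zeros.

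Properly handling this boundary case, where the eigenvalue lies on $\partial\D$ and one must combine property (vi) with (n.2) to identify the annihilator, is where I expect the real difficulty.
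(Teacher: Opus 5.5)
Your ``if'' direction is correct and is essentially the paper's proof of Theorem~\ref{TED}. Your ``only if'' argument also works when $\BS$ has boundary-extension type $0$; the adjoint-eigenvector argument on $\BS/g\BS$ is a dual form of Lemma~\ref{FCIV}. The genuine gap is the ``only if'' direction when $\BS$ has positive boundary-extension type $n$.

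There you reach a multiplier $g$, zero-free on $\D$, with $M_g$ Fredholm and $g(\zeta)=0$ for some $\zeta\in\partial\D$. You then say $\psi$ ``is not bounded away from $0$ there, contrary to the Fredholm property.'' That is exactly the implication being proved, so the step is circular.

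The index/perturbation repair you suggest does not close the gap. Local constancy of the index only shows that the index of $M_g$ is at most $-1$. That is fully consistent with $g(\zeta)=0$, since the range of $M_g$ lies in $\ker E_\zeta$ anyway. Moreover, a function continuous on $\D^-$, zero-free on $\D$ and vanishing at $\zeta$ can map $\D$ onto a whole punctured neighbourhood of $0$. So there need be no small $\lambda$ at which the index of $M_{g-\lambda}$ is known and different. What is missing is a proof that, in positive type, a Fredholm multiplication operator cannot have a symbol vanishing at a point of $\partial\D$; this is where (n.2) has to do real work.

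The paper proceeds as follows.
\begin{enumerate}
\item Theorem~\ref{FCIVZS} gives $\Ran(M_\psi)=(q\BS)^-$ for a polynomial $q$ with zeros in $\D^-$. This rests on Lemmas~\ref{CAP}, \ref{AZOT} and \ref{new}, the last using Hermite interpolation and (vi).
\item Writing $q=\psi g$, the fact that $M_\psi$ is bounded below makes $M_g$ have dense range. Hence $g$ has no zeros on $\D^-$.
\item Property (v) then gives $\Ran(M_q)=\Ran(M_\psi)$, so $M_q$ is Fredholm.
\item If $q=\tilde q\,(z-\zeta)$ with $|\zeta|=1$, then (n.2) puts $\tilde q^{\,n+1}(z-\zeta)^n$ in the closure of the closed range of $M_{q^{n+1}}$. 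This forces $1/(z-\zeta)\in\BS$, contradicting $\BS\subset C^0(\D^-)$.
\end{enumerate}

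Alternatively, your own annihilator approach can be finished by applying it to $g^k\BS$.
\begin{enumerate}
\item With $m=\dim(\BS/g\BS)$, the subspace $g^k\BS$ is closed of codimension $km$.
\item By (n.2), any $\Lambda$ annihilating $g^k\BS$ with $\Lambda\circ(M_z-\zeta)^N=0$ for some $N$ already annihilates $(z-\zeta)^n\BS$. Hence $\Lambda$ lies in the span of the $E^{(j)}_\zeta$, $0\le j<n$.
\item The annihilator is $M_z^*$-invariant, and its eigenvalues can only be the $d$ boundary zeros of $g$. There are finitely many, since the corresponding $E_\zeta$ are independent members of the $m$-dimensional annihilator of $g\BS$.
\item So $km\le nd$ for every $k$, forcing $m=0$. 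Then $M_g$ is invertible, $1/g$ is a multiplier and therefore bounded, and $\psi$ is bounded away from $0$ near $\partial\D$.
\end{enumerate}
As written, though, your proposal stops exactly at this point.
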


Our proof of the preceding theorem depends on a number of lemmas and propositions contributing to proofs of each direction of the theorem, with the ``if direction'' corresponding to Theorem~\ref{TED}  and the ``only-if direction'' corresponding to Theorem~\ref{THD}.  

The following lemma generalizes Proposition 1 of \cite{AB}; the proof is essentially the same.   

\begin{lemma}\label{MqL} Suppose that $\BS$ is a Banach space of analytic functions on $\D$  satisfying Definition~\ref{defBS} and that $q$ is a nonzero polynomial whose zeros lie in $\D$. Then $q\BS$ is a closed, finite-codimensional subspace of $\BS$ invariant under $M_z$; moreover, $\dim\left(\rule{0in}{0.14in}\BS/(q\BS)\right) = \deg(q).$  \end{lemma}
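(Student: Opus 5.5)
The plan is to identify $q\BS$ explicitly as the common kernel of finitely many continuous linear functionals. That description gives closedness immediately, and the dimension count then comes from linear independence of those functionals.

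Write $q(z) = c\prod_{i=1}^m (z-a_i)^{k_i}$ with distinct $a_i\in\D$, $c\neq 0$, and $\sum_i k_i = \deg(q)$. Let
\[
\mathcal{K} = \{ f\in \BS : f^{(j)}(a_i) = 0 \text{ for } 1\le i\le m,\ 0\le j\le k_i-1\},
\]
that is, the intersection of the kernels of the functionals $E^{(j)}_{a_i}$ over the same index ranges.

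1. By Observation~(1), each $E^{(j)}_{a_i}$ is continuous on $\BS$. Hence $\mathcal{K}$ is closed.

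2. We show $q\BS \subseteq \mathcal{K}$. Since $q$ is a multiplier by Observation~(6), $qg\in\BS$ for every $g\in\BS$. Moreover $qg$ vanishes to order at least $k_i$ at each $a_i$, so $qg\in\mathcal{K}$.

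3. We show $\mathcal{K}\subseteq q\BS$. Take $f\in\mathcal{K}$. Because $f$ is analytic on $\D$ and vanishes to order $k_i$ at each $a_i$, the quotient $f/q$ extends to a function $h\in H(\D)$ with $qh = f\in\BS$. Property~(iv) of Definition~\ref{defBS} then gives $h\in\BS$, so $f\in q\BS$. This step is the one that genuinely uses the axioms, specifically property~(iv); the rest is routine.

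4. Combining steps 2 and 3, $q\BS = \mathcal{K}$, which is closed. It is also $M_z$-invariant, since $zqg = q(zg)$ and $zg\in\BS$ by property~(iii).

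5. For the codimension, $\mathcal{K}$ is the kernel of the linear map
\[
\Lambda:\BS\to\CP^{\deg q},\qquad \Lambda f = \bigl(f^{(j)}(a_i)\bigr)_{i,j}.
\]
So $\BS/q\BS$ is isomorphic to $\Ran(\Lambda)$, and it suffices to show $\Lambda$ is surjective. By Observation~(2), the functionals $E^{(j)}_{a_i}$ are linearly independent, and a linearly independent finite family of functionals yields a surjective joint map. Alternatively, Hermite interpolation produces polynomials, which lie in $\BS$ by property~(i), with any prescribed values of $f^{(j)}(a_i)$. Hence $\dim(\BS/q\BS) = \deg(q)$.

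The only potential obstacle is making sure the division in step 3 stays inside $\BS$. This is exactly what property~(iv) supplies, because all zeros of $q$ lie in $\D$.
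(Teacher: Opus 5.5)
Your proposal is correct and follows essentially the same argument as the paper: both identify $q\BS$ with the common kernel of the $\deg(q)$ linearly independent continuous functionals $E^{(j)}_{w_i}$ and use property (iv) for the reverse inclusion, so closedness and codimension $\deg(q)$ follow at once. The paper treats the case $\deg(q)=0$ separately, and your argument covers that case trivially.
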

\begin{proof}    If $\deg(q) = 0$, then $q$ is a nonzero-constant polynomial and $q\BS = \BS$ is closed, has codimension $0$, and is $M_z$ invariant by property (iii) of $\BS$.     Suppose $\deg(q) > 0$ and $q$ has precisely $l$ different zeros in $\D$. Let 
$$
Z =\{(w_1, m_1), (w_2, m_2), \ldots, (w_\l, m_\l)\}
$$ 
be such that $w_1, w_2, \ldots, w_\l$ are the distinct zeros of $q$ with $m_i$ being the multiplicity of $w_i$ for $1\le i \le l$.  (Thus, $\deg(q) = \sum_{i=1}^l m_i$.)  Let $E_q = \cup_{i=1}^l \left(\cup_{j=0}^{m_i-1}\{E^{(j)}_{w_i}\}\right)$ be the set of $\deg(q)$ linearly independent linear functionals corresponding to the zeros of $q$ (and their multiplicities).  Observe that any element of $q\BS$ belongs to the intersection of the kernels of the linear functionals in $E_q$.  Conversely, by property (iv) of $\BS$, if $g$ belongs to the intersection of the kernels of the linear functionals in $E_q$, then $g \in q\BS$.  Hence, $q\BS$ is the intersection of the $\deg(q)$ linearly independent continuous linear functionals in $E_q$; thus, $q\BS$ is closed and has codimension $\deg(q)$ in $\BS$.\end{proof} 

Observe that $\BS$ must satisfy only (ii)-(iv) of Definition~\ref{defBS} for the preceding lemma to hold (and that for (iii), the spectral radius assumption is not needed).

One direction of the proof of  Theorem~\ref{TMT} is straightforward.  

\begin{theorem} \label{TED} Suppose that $\BS$ is a Banach space of analytic functions on $\D$  satisfying Definition~\ref{defBS}. If $\psi$ is a multiplier of $\BS$ that is bounded away from $0$ near $\partial\D$, then $M_\psi: \BS\to \BS$ is Fredholm.
\end{theorem}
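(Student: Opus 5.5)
Since $\psi$ is bounded away from $0$ near $\partial\D$, there are $\epsilon>0$ and $r\in(0,1)$ with $|\psi(z)|>\epsilon$ for $r<|z|<1$. The plan is to factor $\psi = q g$, where $q$ is a polynomial whose zeros lie in $\D$ and $g$ is a multiplier of $\BS$ bounded away from $0$ on all of $\D$. Then $M_\psi = M_q M_g$, and $M_g$ is invertible by property (v). Consequently $\Ran(M_\psi) = q\,\Ran(M_g) = q\BS$, which by Lemma~\ref{MqL} is closed and has codimension $\deg(q)$. Since a nonzero multiplication operator is injective (Observation 5), $M_\psi$ is then Fredholm.

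\emph{Step 1 (the factorization).} Note $\psi\not\equiv 0$. Its zeros lie in the compact disk $\{|z|\le r\}$ and are isolated, so there are finitely many, say $w_1,\dots,w_l$ with multiplicities $m_1,\dots,m_l$. Put $q(z)=\prod_i (z-w_i)^{m_i}$ and $g=\psi/q$. Then $g\in H(\D)$ and $g$ has no zeros in $\D$. Moreover $g\in\BS$: indeed $\psi=\psi\cdot 1\in\BS$, and $qg=\psi\in\BS$, so property (iv) applies.

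\emph{Step 2 ($g$ is a multiplier, bounded away from $0$).} For $f\in\BS$ we have $q(gf)=\psi f\in\BS$, so $gf\in\BS$ by property (iv); hence $g$ is a multiplier. For the lower bound, on $\{r'<|z|<1\}$ with $r<r'<1$ we get $|g|\ge \epsilon/\sup_{\D^-}|q| >0$. On the compact set $\{|z|\le r'\}$, $g$ is continuous and zero-free, so $|g|$ has a positive minimum there. Thus $g$ is bounded away from $0$ on $\D$, and property (v) says $1/g$ is a multiplier. Hence $M_g$ is invertible with inverse $M_{1/g}$, and $g\BS=\BS$.

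\emph{Step 3 (conclusion).} We have $\psi\BS = q(g\BS)=q\BS$, and Lemma~\ref{MqL} gives that this range is closed with codimension $\deg(q)$. Together with injectivity, $M_\psi$ is Fredholm, with index $-\deg(q)$.

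The only step needing care is Step 2, namely verifying that $g$ is genuinely a multiplier so that property (v) applies. Property (iv) handles this directly, so I expect no real obstacle; the argument is essentially bookkeeping.
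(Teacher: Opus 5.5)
Your proposal is correct and follows essentially the same route as the paper. You factor $\psi = qg$ with $q$ a polynomial whose zeros lie in $\D$, use property (iv) to get $g\in\BS$ and that $g$ is a multiplier, and use property (v) to make $1/g$ a multiplier. This gives $\Ran(M_\psi)=q\BS$, which is closed of codimension $\deg(q)$ by Lemma~\ref{MqL}. The only differences are that you spell out the factorization and the lower bound for $|g|$ (which the paper asserts without detail), and you get both inclusions at once from the invertibility of $M_g$.
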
  
\begin{proof} Suppose that $\psi$ is a multiplier of $\BS$ that is bounded away from zero near $\partial \D$. Then $\psi = qg$, where $q$ is a polynomial whose zeros lie in $\D$, and $g\in H(\D)$ is both bounded on $\D$ and bounded away from $0$ on $\D$.  Note that $g$ is bounded  on $\D$ because $\psi$, being a multiplier of $\BS$, is bounded on $\D$; also note that $g\in \BS$ by property (iv) of $\BS$.  

Observe that any function of the range of $M_\psi$ must vanish at the zeros of $q$ (according to their multiplicities); thus, any function in the range of $M_\psi$ factors as $qh$ for some $h\in H(\D)$ and it follows that $h\in\BS$ by property (iv) of $\BS$.  We see $\Ran(M_\psi) \subseteq q\BS$.

 Let $f\in \BS$ be arbitrary.  Because $\psi$ is a multiplier of $\BS$, we have $\psi f = qg f\in \BS$.  Because $q(gf)\in \BS$, property (iv) of $\BS$ implies $gf\in \BS$.  Because $f$ is arbitrary, we see that $gf\in \BS$ for all $f\in \BS$, so that $g$ is a multiplier of $\BS$.  Because $g$ is bounded away from $0$ on $\D$,  $1/g$ is also a multiplier of $\BS$ (property (v)). Thus, for an arbitrary $f\in \BS$, we have $\frac{1}{g}f\in \BS$, and
$$
qf = q g \frac{1}{g} f = \psi \frac{1}{g} f = M_\psi \left( \frac{1}{g}f\right).
$$
Thus, $q\BS\subseteq \Ran\left(M_\psi\right)$, and, because the reverse inclusion also holds, we have $\Ran(M_\psi) = q\BS,$ a closed subspace of $\BS$ having finite codimension by Lemma~\ref{MqL}.  Because $M_\psi$ is injective, we see that $M_\psi$ is Fredholm, which completes the proof.    
\end{proof} 

Observe that $\BS$ must satisfy only (ii)-(v) of Definition~\ref{defBS} for the preceding theorem to hold (and that for (iii), the spectral radius assumption is not needed).  The other direction of our proof of Theorem~\ref{TMT} depends upon several lemmas as well as a theorem characterizing the closed, finite-codimensional, $M_z$-invariant subspaces of $\BS$.

The following lemma is based on an idea of Gellar \cite[Section 9]{Ge}. 

\begin{lemma}\label{CAP} Suppose that $\BS$ is a Banach space of analytic functions on $\D$ satisfying Definition~\ref{defBS} above.  If $\N\subseteq \BS$ is a closed, finite-codimensional subspace of $\BS$, and $\N$ is invariant under $M_z$, then $\mathcal{N}$ contains a monic polynomial of degree at most $\dim(\BS/\N)$.\end{lemma}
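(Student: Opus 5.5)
Let $d = \dim(\BS/\N)$ and let $\pi:\BS\to \BS/\N$ be the quotient map. Because $\N$ is closed and $M_z$-invariant, $M_z$ induces a bounded operator $T$ on the $d$-dimensional space $\BS/\N$, defined by $T\pi(f) = \pi(zf)$. The natural approach is Cayley--Hamilton: let $p$ be the characteristic (or minimal) polynomial of $T$. It is monic of degree at most $d$ and satisfies $p(T)=0$. Applying this to $\pi(1)$ gives
$$
0 = p(T)\pi(1) = \pi(p(M_z)1) = \pi(p),
$$
so $p\in\N$. Here $1\in\BS$ because $\BS$ contains the polynomials (property (i)), and $p(M_z)1 = p$ as an element of $\BS$. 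If $d=0$, then $\N=\BS$ and the constant $1$ is a monic polynomial of degree $0$ in $\N$.

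An equivalent elementary route gives the same conclusion and may be preferable for exposition. The $d+1$ vectors $\pi(1),\pi(z),\dots,\pi(z^d)$ in the $d$-dimensional quotient are linearly dependent. So there is a nonzero polynomial $r$ of degree at most $d$ with $\pi(r)=0$, i.e.\ $r\in\N$. Normalizing the leading coefficient yields a monic polynomial in $\N$ of degree at most $d$. This version does not even use invariance under $M_z$. Invariance is only needed if one wants the degree bound via the minimal polynomial of $T$ applied to $\pi(1)$, or later, when one wants to connect this polynomial to the structure of $\N$.

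I do not expect a genuine obstacle. The only points to check are routine. First, $\BS/\N$ is finite dimensional of dimension $d$ by hypothesis, and closedness of $\N$ makes the quotient a Banach space, though this is not even needed for the linear-algebra argument. Second, the polynomials lie in $\BS$. I would present the Cayley--Hamilton version, since it matches the invariant-subspace viewpoint and Gellar's idea, and remark that the dependence argument gives the same conclusion directly.
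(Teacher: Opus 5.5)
Your Cayley--Hamilton argument is correct and is essentially the paper's proof: the paper takes the minimal polynomial $p$ of the induced operator $T(f+\N)=zf+\N$ on $\BS/\N$, notes that $p(T)=0$ means $p\BS\subseteq\N$, and uses $1\in\BS$ to conclude $p\in\N$. Your alternative argument, that $\pi(1),\pi(z),\dots,\pi(z^d)$ are linearly dependent, is also valid, and you are right that it shows $M_z$-invariance is not needed for the lemma as stated.
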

\begin{proof}  Let  $\N\subseteq \BS$ be a closed, finite-codimensional subspace of $\BS$ that is invariant under $M_z$.  If $\N = \BS$, then $\N$ contains the monic polynomial $p\equiv 1$ by property (i) of $\BS$.  For the remainder of the proof, we assume $\N$ has positive codimension in $\BS$.  

  Define a linear mapping  $T:\BS/\N \to \BS/\N$ by $T(f + \N) = zf + \N$, noting that $T$ is well defined by the invariance of $\N$ under $M_z$.  Because $\BS/\N$ is a finite dimensional vector space, $T$ has a minimal polynomial $p$ having degree at most $\dim(\BS/\N)$ such that $p(T) = 0$; equivalently, $p(z)\BS \subseteq \N$. Because $\BS$ contains the constant function $z\mapsto 1$, we see $p$ is a polynomial of degree at most $\dim(\BS/\N)$ that is contained in $\N$, and, being a minimal polynomial, $p$ is monic. \end{proof}

The following  lemma generalizes Theorem 2 of \cite{AB}, and its proof is essentially the same as that in \cite{AB}.

\begin{lemma}\label{FCIV} Suppose that $\BS$ is a Banach space of analytic functions on $\D$ satisfying Definition~\ref{defBS} and that $\BS$ has boundary-extension type $0$.  If $\N\subseteq \BS$ is a closed, finite-codimensional subspace of $\BS$ that is invariant under $M_z$, then there is a monic polynomial $q$ whose zeros lie in $\D$ such that $\N = q\BS$.  Moreover, $\deg(q) = \dim(\BS/\N)$, so that the number of zeros of $q$ in $\D$ counting multiplicities is  $\dim(\BS/\N)$.
\end{lemma}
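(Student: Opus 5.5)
The plan is to combine Lemma~\ref{CAP} with boundary-extension type $0$ and then invoke Lemma~\ref{MqL}. Let $\N$ be closed, finite-codimensional, and $M_z$-invariant. If $\N=\BS$, take $q\equiv 1$. Otherwise, Lemma~\ref{CAP} gives a monic polynomial $p\in\N$ with $\deg p\le \dim(\BS/\N)$. Observation~\ref{MOR} then gives $(p\BS)^-\subseteq \N$.

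Write $p = q r$, where $q$ is monic with all zeros in $\D$ and $r$ is monic with all zeros in $\CP\setminus\D$. The key step is to show $(p\BS)^- = q\BS$. I would handle the zeros of $r$ one at a time.

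\begin{itemize}
\item For a zero $a$ with $|a|>1$, the operator $M_{z-a} = -a(I - a^{-1}M_z)$ is invertible on $\BS$. This holds because $M_z$ has spectral radius $1$ by property (iii), so $a$ lies outside the spectrum of $M_z$. Hence $M_{z-a}$ is surjective.
\item For a zero $\zeta\in\partial\D$, Observation~\ref{p6a} says $M_{z-\zeta}$ has dense range, because $\BS$ has type $0$.
\end{itemize}

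Composing, $M_r$ is a product of commuting operators, each either surjective or with dense range, so $M_r$ has dense range. Since $M_q$ is bounded, $M_q$ carries the dense subspace $r\BS$ onto a dense subset of $q\BS$. Thus $q r\BS$ is dense in $q\BS$, and $q\BS$ is closed by Lemma~\ref{MqL}. Therefore $(p\BS)^- = q\BS$, and so $q\BS\subseteq\N$.

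Lemma~\ref{MqL} then gives $\dim(\BS/q\BS) = \deg q$. Since $q\BS\subseteq \N$,
\[
\dim(\BS/\N)\le \dim(\BS/q\BS)=\deg q\le \deg p\le \dim(\BS/\N).
\]
All of these are equalities. Two subspaces of $\BS$ of the same finite codimension, one contained in the other, must coincide, so $\N=q\BS$. Equality also forces $\deg q=\dim(\BS/\N)$, which is the stated count of zeros of $q$ in $\D$ with multiplicity.

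The main obstacle is the density step for the factor $r$: unimodular zeros need boundary-extension type $0$, and zeros outside $\D^-$ need the spectral radius hypothesis. Once $M_r$ is known to have dense range, the rest is bookkeeping.
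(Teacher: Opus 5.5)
Your proposal is correct and follows essentially the same route as the paper: take a monic $p\in\N$ from Lemma~\ref{CAP}, factor off the zeros outside $\D$ (invertible factors via the spectral radius for modulus greater than $1$, dense-range factors via boundary-extension type $0$ for modulus $1$) to get $q\BS=(p\BS)^-\subseteq\N$, and close with the same dimension chain using Lemma~\ref{MqL}. You also spell out, via the continuity of $M_q$, why the density of $r\BS$ yields $(p\BS)^-=q\BS$, a step the paper states without comment.
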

\begin{proof}  Let  $\N\subseteq \BS$ be a closed, finite-codimensional subspace of $\BS$ that is invariant under $M_z$.  If $\N = \BS$, choose $q\equiv 1$. For the remainder of the proof, we assume $\N$ has positive codimension in $\BS$.   

From Lemma~\ref{CAP} above, we know there is a monic polynomial $p$ with $\deg(p) \le \dim(\BS/\N)$ such that $p\in \N$.
Let $p = qh$, where $q$ is a  monic polynomial whose zeros lie in $\D$ and $h$ is a monic polynomial whose zeros (if any) have modulus $\ge 1$.  If $\lambda$ is a zero of $h$ such that $|\lambda|  >1$, then $M_{z-\lambda}$ is invertible by our assumption that $M_z: \BS\to \BS$ has spectral radius 1. If $\lambda$ is a zero of $h$ such that $|\lambda|  =1$, then $M_{z-\lambda}$ has dense range because we are assuming $\BS$ has boundary-extension type $0$ (see Observation~\ref{p6a} following the statement of Definition~\ref{defBS}).   It follows that $M_h: \BS \to \BS$ has dense range because either $h\equiv 1$ or is a product of dense-range multiplication operators of the form $M_{z-\lambda}$, with $\lambda$ being a zero of $h$.   Thus, $q\BS = (p \BS)^- \subseteq \N$.   We have
\begin{align*}
\dim (\BS/\N) & \le \dim (\BS/(q\BS)) \quad (q\BS\subseteq \N)\\
 &= \deg(q) \quad (\text{Lemma}~\ref{MqL})\\
 & \le \deg(p) \quad  ( \text{definitions of}\ p\ \text{and}\ q)\\
 & \le \dim(\BS/\N) \quad ( \text{definition of}\ p).
\end{align*}
It follows that  $\dim (\BS/\N) = \deg(q) = \dim (\BS/(q\BS))$, and, because $q\BS\subseteq \N$, we must have $\N= q\BS$, as desired.
\end{proof}

\begin{lemma}  \label{AZOT}  Suppose that $\BS$ is a Banach space of analytic functions on $\D$ satisfying Definition~\ref{defBS} and that $\BS$ has boundary-extension type $n$ for some positive integer $n$. Suppose $p$ is a polynomial of positive degree. Then there is a polynomial $q$ with $\deg(q)\le \deg(p)$ such that the following hold.\par
(a) $q$ has no zeros outside of $\D^-$.\par
(b) All zeros of $q$ in $\D$ are precisely the zeros of $p$ in $\D$ with the same multiplicities.  \par
(c) All zeros of $q$ in $\partial\D$ are precisely the zeros of $p$ in $\partial \D$, but each has multiplicity at most $n$.\par
(d) $q$ is in the closure of the range of $M_p:\BS \to \BS$.
\end{lemma}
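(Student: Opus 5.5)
Factor $p = c\, q_0 \prod_{\zeta} (z-\zeta)^{k_\zeta}\, h$, where $c\neq 0$ is a constant, $q_0$ is monic with all zeros in $\D$ (the zeros of $p$ in $\D$ with their multiplicities), the product runs over the distinct zeros $\zeta$ of $p$ on $\partial\D$ with multiplicities $k_\zeta$, and $h$ is monic with all zeros of modulus $>1$. We take
$$
q = q_0\prod_{\zeta}(z-\zeta)^{\min(k_\zeta,n)}.
$$
Then (a), (b), (c) and $\deg(q)\le\deg(p)$ are immediate. The work is in (d).

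For (d), let $\mathcal{M}$ denote the closure of $\Ran(M_p)$. It is a closed $M_z$-invariant subspace, and, as in Observation~\ref{MOR}, it equals the closure of $p\BS$ (polynomials being dense and $M_p$ bounded). We remove the factors of $p$ one at a time, keeping at each stage a polynomial in $\mathcal{M}$.

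\emph{Step 1: the factor $h$.} For each zero $\lambda$ of $h$ we have $|\lambda|>1$, so $M_{z-\lambda}$ is invertible by the spectral radius assumption (iii). Hence $M_h$ is invertible, and
$$
p\BS = c\,q_0\prod_\zeta(z-\zeta)^{k_\zeta}\,h\BS = q_0\prod_\zeta(z-\zeta)^{k_\zeta}\BS .
$$
Thus $p_1 := q_0\prod_\zeta(z-\zeta)^{k_\zeta}$ lies in $\mathcal{M}$, and $(p_1\BS)^-=\mathcal{M}$.

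\emph{Step 2: lowering a boundary multiplicity.} Fix a boundary zero $\zeta$ with $k_\zeta>n$, and write the current polynomial as $r\,(z-\zeta)^{k}$ with $k>n$ and $r$ a polynomial. By property (n.2) there are $g_j\in\BS$ with $(z-\zeta)^{n+1}g_j\to(z-\zeta)^n$ in $\BS$. Multiplying by the multiplier $r(z-\zeta)^{k-n-1}$, which is a polynomial and hence a bounded multiplier by Observation (6), gives
$$
r\,(z-\zeta)^{k}g_j \to r\,(z-\zeta)^{k-1}.
$$
So $r(z-\zeta)^{k-1}$ lies in the closure of $r(z-\zeta)^k\BS\subseteq\mathcal{M}$. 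Since that closure is a closed $M_z$-invariant subspace containing $r(z-\zeta)^{k-1}$, Observation~\ref{MOR} shows that the closure of $r(z-\zeta)^{k-1}\BS$ is also contained in $\mathcal{M}$. This lets us iterate.

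\emph{Step 3: iteration.} Repeating Step 2 lowers each boundary exponent $k_\zeta$ down to $n$, one $\zeta$ at a time. The process ends with $q\in\mathcal{M}$, which is (d).

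The main point requiring care is this bookkeeping: each intermediate polynomial must stay in $\mathcal{M}$, and the whole subspace it generates must stay in $\mathcal{M}$. Observation~\ref{MOR} together with the closedness of $\mathcal{M}$ handles this. Note that $k\ge n+1$ is exactly what makes $r(z-\zeta)^{k-n-1}$ a polynomial in Step 2. No reduction is attempted when $k_\zeta\le n$, which is consistent with the continuity of $E^{(j)}_\zeta$ for $j<n$ noted in Remark (ii).
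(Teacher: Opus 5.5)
Your proposal is correct and takes essentially the same route as the paper. You first discard the factor $h$ with zeros outside $\D^-$, using invertibility of $M_h$ from the spectral-radius assumption. You then repeatedly apply property (n.2), multiplied by the bounded polynomial multiplier $r(z-\zeta)^{k-n-1}$, to lower each boundary multiplicity exceeding $n$ by one, with Observation~\ref{MOR} keeping every intermediate polynomial in the closure of $\Ran(M_p)$. The only differences are cosmetic: you define $q$ explicitly at the outset and track a single closed invariant subspace $\mathcal{M}$, whereas the paper builds $q$ through a chain of nested range closures.
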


\begin{proof}  Suppose that $p$ has some zeros not in $\D^-$. Then we can factor $p$ as $p = \tilde{p} h$ where $\tilde{p}$ is a polynomial whose zeros are precisely the zeros of $p$ in $\D^-$ (including multiplicities), and $h$ is a polynomial whose zeros (including multiplicities) are precisely those of $p$ outside of  $\D^-$.  Because the spectral radius of $M_z:\BS\to \BS$ is 1, the operator $M_h: \BS\to \BS$ is invertible, being a nonzero constant times a product of invertible operators of the form $M_{z-\lambda}$ where $\lambda$ is a zero of $h$ (necessarily satisfying $|\lambda| > 1$).  Thus, $\Ran(M_{p}) = \Ran(M_{\tilde{p}h})= \Ran(M_{\tilde{p}})$. If $p$ has all its zeros in $\D^-$, then set $\tilde{p}=p$.  We have shown  that $\Ran(M_{p}) = \Ran(M_{\tilde{p}})$, where $\tilde{p}$ is a polynomial having no zeros in $\CP\setminus\D^-$ and whose zeros in $\D^-$ are precisely those of $p$ in $\D^-$ (including multiplicities).

Suppose that $\tilde{p}$ has precisely $d >0$ different zeros in $\partial \D$. Because $\tilde{p}$ has no zeros outside of $\overline{\D}$,  $\tilde{p}$ factors as  $\tilde{p}(z) =  g(z)\prod_{i=1}^d (z-\zeta_i)^{\mu_i}$, where $g$ is a polynomial whose zeros are precisely the zeros of $\tilde{p}$ in $\D$ (including multiplicities), $\zeta_1, \ldots, \zeta_d$ are the $d$ different zeros of $\tilde{p}$ in $\partial \D$, and, for $1\le i \le d$,  $\mu_i$ is the multiplicity of $\zeta_i$.     If $\tilde{p}$ has no zeros in $\partial \D$ with multiplicity greater than $n$, then set $q=\tilde{p}$ (and observe $q$ satisfies (a)--(d) of the lemma).

 Suppose that for some integer $i_0$, $1 \le i_0 \le d$, $\zeta_{i_0}$ is a zero of $\tilde{p}$ with multiplicity $n+1$ or greater; i.e., $\mu_{i_0} \ge n+1$.  Then $\tilde{p}(z)= u(z)(z - \zeta_{i_0})^{n+1}$, where $u$ is the polynomial given by 
 $$
 u(z) = g(z) \big(z-\zeta_{i_0}\big)^{\mu_{i_0} -n-1} \prod_{\myov{i=1}{i \neq i_0}}^d \big(z-\zeta_i\big)^{\mu_i}.
 $$
 Because $\BS$ has boundary-extension type $n$, $M_{(z-\zeta_{i_0})^{n+1}}$ has $f(z) = (z-\zeta_{i_0})^n$ in the closure of its range, and, by the continuity of $M_u:\BS\to \BS$, we see $M_{\tilde{p}} = M_uM_{(z-\zeta_{i_0})^{n+1}}$ has $uf$ in the closure of its range; thus,  $\left(\Ran(M_{\tilde{p}})\right)^-$ includes 
$q_1(z) = u(z)(z-\zeta_{i_0})^n = \frac{\tilde{p}(z)}{z-\zeta_{i_0}}$. 

 If $\mu_{i_0} =n+1$, then $q_1$ has $\zeta_{i_0}$ as a zero of multiplicity $n$. If $\mu_{i_0} > n+1$, then because $q_1$ is in the closure of the range of $M_{\tilde{p}}$ and has $\zeta_{i_0}$ as a zero of multiplicity at least $n+1$, we may repeat the argument of the preceding paragraph to obtain that $q_2(z) = \frac{\tilde{p}(z)}{(z-\zeta_{i_0})^2}$ belongs to the closure of the range of $M_{q_1}$, and hence to the closure of the range of $M_{\tilde{p}}$ by Observation~\ref{MOR} of the list following Definition~\ref{defBS}.   Continuing this process if necessary, we obtain a polynomial $q_{\mu_{i_0}-n}$ in the closure of the range of $M_{\tilde{p}}$ having $\zeta_{i_0}$ as  a zero of multiplicity $n$.  Note that the zeros of $q_{\mu_{i_0}-n}$ are precisely those of $\tilde{p}$ with the same multiplicities except for that of $\zeta_{i_0}$.
 
Suppose  $\tilde{p}$ has a zero different from $\zeta_{i_0}$, say $\zeta_{i_1}$, where $i_1\in \{1,...,d\} \setminus \{i_0\}$,  such that $\zeta_{i_1}$ also has multiplicity exceeding $n$.  Therefore, $\zeta_{i_1}$ is a zero of multiplicity greater than $n$ for $q_{\mu_{i_0}-n}$ as well, and by what we have established above, the closure of the range of $M_{q_{\mu_{i_0}-n}}$ includes a polynomial necessarily in the closure of the range of $M_{\tilde{p}}$ whose zeros are precisely those of $\tilde{p}$ with the same multiplicities  except that both $\zeta_{i_0}$ and $\zeta_{i_1}$ have multiplicity $n$.   Continuing this process if necessary, we see that the closure of the range of $M_{\tilde{p}}$ contains a polynomial $q$ whose zeros are precisely those of $\tilde{p}$, with the same multiplicities for zeros in $\D$ and with multiplicities at most $n$ for zeros in $\partial \D$.    Because $\tilde{p}$ has no zeros in $\CP\setminus\D^-$, the zeros of $\tilde{p}$ in $\D^-$ are precisely those of $p$ in $\D^-$, and $\Ran(M_p)=\Ran(M_{\tilde{p}}) $, the lemma follows.
\end{proof}

 We now introduce notation for closed, finite-codimensional subspaces of $\BS$ determined by zeros. Rather than working with lists of zeros repeated according to multiplicities, we will work with zero-specification sets, which will consist of ordered pairs with the first entry being a complex number specifying a zero and the second entry being a positive integer specifying a minimum multiplicity. 

  If $\BS$ satisfies Definition~\ref{defBS} and has boundary-extension type $0$, then a  {\it finite zero-specification set} $Z$ for $\BS$ takes the form
  \begin{equation}\label{ZsetZ} 
  Z= \cup_{i=1}^l\left\{\rule{0in}{.13in}(w_i, m_i)\right\},
  \end{equation}
 where $l$ is a nonnegative integer, $\{w_i: 1 \le i \le l\}$ consists of $l$ different numbers in $\D$,  and $m_i$ is a positive integer for $1\le i \le l$.  As usual, we take a union indexed from integers $1$ to $l$ to be empty if $l <1$; thus, zero-specification sets can be empty.  If $\BS$ satisfies Definition~\ref{defBS} and has  boundary-extension type $n$ for some positive integer $n$,  then a {\it finite zero-specification set} $Z$ for $\BS$  takes the form 
 \begin{equation}\label{ZsetP}
Z = \cup_{i=1}^l\left\{\rule{0in}{.13in}(w_i, m_i)\right\}  \bigcup \cup_{i=1}^d\left\{\rule{0in}{.13in}(\zeta_i,\mu_i)\right\},
\end{equation}
 where $l$ and $d$ are nonnegative integers such that  $\{w_i: 1\le i \le l\}$ consists of $l$ different numbers in $\D$,  $\{\zeta_i:  1\le i\le d\}$ consists of $d$ different numbers in $\partial\D$,  $m_i$ is a positive integer for $1\le i \le l$, and $\mu_i\le n$ is a positive integer for $1\le i\le d$, where $n$ is the boundary-extension type for $\BS$.   

  \begin{definition}\label{dzs}   For a Banach space $\BS$ of analytic functions on $\D$ satisfying Definition~\ref{defBS}, we define the closed, finite codimensional subspace $S_Z$ of $\BS$ determined by a finite zero-specification set $Z$ as follows.  
  \begin{itemize}
\item[(i)]  If $Z = \emptyset$, then $S_Z = \BS$. 
\item[(ii)]  If $\BS$ has boundary-extension type $0$ and  $Z$, given by (\ref{ZsetZ}), is non-empty, then 
  \begin{align}\label{ZSD}
  S_Z  =  \text {the intersection of the kernels of the linear functionals on}\ \BS\  \text{belonging to} \\   E_{\D}:=  \cup_{i=1}^l \left(\cup_{j=0}^{m_i-1}\left\{E_{w_i}^{(j)}\right\}\right). \rule{2in}{0in}\nonumber
\end{align}
 \item[(iii)] If $\BS$ has boundary-extension type $n$ for some positive integer $n$ and $Z$, given by (\ref{ZsetP}), is non-empty, then
 \begin{align}\label{ZSDB}
  S_Z  =  \text {the intersection of the kernels of the linear functionals on}\ \BS\  \text{belonging to}\\  E_{\D^-}:=  \cup_{i=1}^l \left(\cup_{j=0}^{m_i-1}\left\{E_{w_i}^{(j)}\right\}\right) \bigcup\   \cup_{i=1}^d  \left(\cup_{j=0}^{\mu_i-1}\left\{E_{\zeta_i}^{(j)}\right\}\right). \rule{1in}{0in}\nonumber
\end{align}
\end{itemize}
   \end{definition}
  
  Remarks: (a) If, for example, $Z$ is given by (\ref{ZsetP}), then 
  $$
  S_Z = \left\{f\in \BS:\   \begin{array}{l} \text{for} \ 1\le i \le l\,\  f^{(j)}(w_i) = 0 \ \text{for}\ 0\le j\le m_i -1;\  \text{and}\\ \text{for}\ 1 \le i \le d, f^{(j)}(\zeta_i) = 0\ \text{for}\ 0\le j\le \mu_i - 1\end{array} \right\},  
 $$ 
 so that $S_Z$ consists of those functions in $\BS$ whose zeros and corresponding minimum multiplicities are specified by $Z$.

  (b) Suppose that $\BS$ has boundary-extension type $n$ for some positive integer $n$.  Because functions in $\BS$ need not have derivatives  at points of $\partial \D$, relative to $\D^-$, of order exceeding $n-1$, we say that function $f\in\BS$ has $\zeta\in \partial \D$ as a zero of multiplicity $n$ provided $f^{(k)}(\zeta)= 0$ for integers $k$ satisfying $0\le k\le n-1$ and either $f^{(n)}(\zeta) \ne 0$ or $f^{(n)}(\zeta)$ does not exist.
  
   (c) Because the linear functionals of $E_{\D}$ of (\ref{ZSD}) are linearly independent, the closed subspace $S_Z$ of $(\ref{ZSD})$ has codimension $\sum_{i=1}^l m_i$ in $\BS$,  and, similarly, the closed subspace $S_Z$ of (\ref{ZSDB}) has  codimension $\sum_{i=1}^l m_i + \sum_{i=1}^d \mu_i$ in $\BS$. 
  
 \begin{definition}  Let $\BS$ be a Banach space of analytic functions on $\D$ satisfying Definition~\ref{defBS}. We say a zero-specification set $Z$ for $\BS$ specifies the zeros of a function  $f\in\BS$  provided that the zeros of $f$ and their multiplicities are precisely those specified by $Z$. 
  \end{definition}
  
  Thus, if $\BS$ has boundary-extension type $n$ for some positive integer $n$ and $Z$ having the form (\ref{ZsetP}) specifies the zeros of $f\in \BS$, then for $1 \le i \le l$,  $f$ has $w_i$ as a zero of multiplicity $m_i$, and, for $1\le i \le d$, $f$ has $\zeta_i$ as a zero of multiplicity $\mu_i$. Moreover, these specified zeros are the only zeros of $f$.

 \begin{lemma}  \label{new} Let $\BS$ be a Banach space of analytic functions on $\D$ satisfying Definition~\ref{defBS}, and let $\BS$ have boundary-extension type $n$ for some positive integer $n$.  Suppose that $q$ is a polynomial having no zeros in $\CP\setminus \D^-$, precisely $l$ different zeros $\{w_i: 1 \le i \le l\}\subset \D$ and precisely $d$ different zeros $\{\zeta_i: 1 \le i \le d\}\subset\partial \D$ such that $m_i$ is the multiplicity of $w_i$  for $1\le i \le l$, and $\mu_i\le n$ is the multiplicity of $\zeta_i$ for $1\le i \le d$. Let $Z$ be the corresponding zero-specification set---the set specifying the zeros of $q$, which takes the form (\ref{ZsetP}).  Then the closure of the range of $M_q: \BS\to \BS$ is $S_Z$, which has codimension $\sum_{i=1}^l m_i + \sum_{i=1}^d \mu_i = \deg(q)$ in $\BS$.  \end{lemma}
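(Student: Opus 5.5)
The plan is to prove the two inclusions $(q\BS)^- \subseteq S_Z$ and $S_Z \subseteq (q\BS)^-$, the second through a codimension count.

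\emph{Easy inclusion and codimension.} Every function of the form $qf$ with $f\in\BS$ satisfies the conditions defining $S_Z$. At each $w_i$ this is clear, since $qf$ vanishes there to order at least $m_i$. At each $\zeta_i\in\partial\D$ we use $\BS\subset C^{n-1}(\D^-)$ with $\mu_i\le n$. The Leibniz rule, applied on $\D^-$ to $q$ (a polynomial) and $f\in C^{n-1}(\D^-)$, gives $(qf)^{(j)}(\zeta_i)=0$ for $0\le j\le \mu_i-1\le n-1$. Since $S_Z$ is an intersection of kernels of functionals that are continuous (Remark (ii) after Definition~\ref{BETD} for boundary points, Observation (1) for interior points), $S_Z$ is closed, and hence $(q\BS)^-\subseteq S_Z$. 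By Remark (c) after Definition~\ref{dzs}, $S_Z$ has codimension $\sum m_i+\sum\mu_i$. This equals $\deg(q)$ because $q$ has no zeros off $\D^-$.

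\emph{Reverse inclusion.} It suffices to show $\dim(\BS/(q\BS)^-)\le \deg(q)$. Then $(q\BS)^-\subseteq S_Z$ with both of codimension $\deg(q)$ forces equality. The closed subspace $\N=(q\BS)^-$ is $M_z$-invariant and has finite codimension, since it contains $S_Z$. Its codimension is bounded by that of the subspace $q\BS+\mathcal{P}$-type construction, but a cleaner route is direct.
\begin{itemize}
\item Polynomials are dense (property (i)), so $\N$ is the closure of $q\cdot\mathcal P$, where $\mathcal P$ is the set of polynomials.
\item By division, every polynomial $r$ can be written $r=qs+t$ with $\deg t<\deg q$.
\item Hence $\mathcal P\subseteq q\mathcal P+\mathcal P_{<\deg q}$, where $\mathcal P_{<\deg q}$ is the space of polynomials of degree less than $\deg q$.
\item Taking closures, and using that the sum of a closed subspace and a finite-dimensional subspace is closed, gives $\BS=\mathcal P^-\subseteq \N+\mathcal P_{<\deg q}$.
\end{itemize}
Therefore $\dim(\BS/\N)\le\deg q$, which completes the proof.

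\emph{Main obstacle.} I expect the only delicate point to be justifying the boundary Leibniz computation, that is, that $(qf)^{(j)}(\zeta_i)$ is meaningful and vanishes. This follows from $f\in C^{n-1}(\D^-)$ and $q$ being a polynomial. Notably, property (vi) and Lemma~\ref{AZOT} are not needed: the density argument above already gives the codimension bound.
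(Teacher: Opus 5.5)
Your proof is correct, and it takes a genuinely different route from the paper for the reverse inclusion $S_Z\subseteq(q\BS)^-$.

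\textbf{How the paper argues.} The paper works element by element. Given $f\in S_Z$, it uses property (iv) to write $f=g\tilde f$, where $g$ carries the zeros in $\D$. It then approximates $\tilde f$ by polynomials $p_k$ and subtracts the Hermite interpolants $u_k$ that match the jets of $p_k$ at the $\zeta_i$, so that $p_k-u_k\in\tilde q\,\mathcal P$. Finally, it shows $u_k\to 0$ by inverting the fixed confluent Vandermonde matrix and invoking property (vi). The case $d=0$ is handled separately through Lemma~\ref{MqL}, which again uses (iv).

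\textbf{How you argue.} You replace this explicit approximation with a codimension count. The division algorithm gives $\BS=(q\BS)^-+\mathcal P_{<\deg q}$, so $\dim\bigl(\BS/(q\BS)^-\bigr)\le\deg q$. Since $(q\BS)^-\subseteq S_Z$ and $\dim(\BS/S_Z)=\deg q$ by Remark (c), equality is forced.

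\textbf{What each approach buys.}
\begin{itemize}
\item Your version is shorter and treats $d=0$ and $d>0$ uniformly.
\item It uses neither (iv) nor (vi). You need only density of the polynomials, continuity of the functionals in $E_{\D^-}$, and their linear independence (Observation (2)); the Hermite-interpolation content is hidden in that independence.
\item It shows that $\dim\bigl(\BS/(q\BS)^-\bigr)\le\deg q$ holds in any such space in which the polynomials are dense.
\item The paper's proof, in exchange, is constructive: it exhibits explicit approximants $g\,(p_k-u_k)\in q\,\mathcal P$ of a given $f\in S_Z$.
\end{itemize}

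\textbf{One slip to fix.} You state that $\N=(q\BS)^-$ has finite codimension ``since it contains $S_Z$.'' That has the inclusion backwards, since $\N\subseteq S_Z$; read literally, it assumes exactly what you are proving. The preceding fragment about a ``$q\BS+\mathcal P$-type construction'' is also garbled. Delete both: your division argument establishes the finite codimension of $\N$ on its own, and nothing else in the proof depends on those sentences.
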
 

\begin{proof}
Suppose that $d=0$. Then $q(z)= c\prod_{i=1}^{l}\big(z-w_i\big)^{m_{i}}$ for some nonzero constant $c$ (where, as usual, we interpret the product to be 1 if $l < 1$).  By Lemma~\ref{MqL} and its proof, $\Ran(M_q) = q\BS = S_Z$, a closed subspace of $\BS$ having codimension $\deg(q)$. 

Now suppose that $d$ is a positive integer. Then
$$q(z)=c\prod_{i=1}^{l}\big(z-w_i\big)^{m_i}\cdot \prod_{i=1}^d (z-\zeta_i)^{\mu_i}\ \text{for some nonzero constant}\ c.$$ 
Clearly, $\Ran(M_q)\subseteq S_Z$, and, because $S_Z$ is closed, we have $\left(\Ran(M_q)\right)^-\subseteq S_Z$. We will show that $S_Z \subseteq \left(\Ran(M_q)\right)^-$. Set $\tilde{q}(z)=\prod_{i=1}^d (z-\zeta_i)^{\mu_i}$ and $g(z)=c\prod_{i=1}^{l}\big(z-w_i\big)^{m_i}$ (and note  that $q= g\tilde{q}$). 
Let $f\in S_Z$ be arbitrary.   By property (iv) of $\BS$, $f=g\tilde{f}$, where $\tilde{f} \in \BS$.  Note that $\tilde{f}$ lies in the kernel of each  linear functional in $E_T:=\cup_{i=1}^d  \left(\cup_{j=0}^{\mu_i-1}\{E_{\zeta_i}^{(j)}\}\right)$ via an easy argument based on the product rule,  the factorization $f = g\tilde{f}$, $f\in S_Z$, and $g(\zeta) \ne 0$ for all $\zeta \in \partial\D$. 

Because the polynomials are dense in $\BS$, there is a sequence $(p_k)$ of polynomials such that $(p_k)$ converges to $\tilde{f}$ in $\BS$.
Because linear functionals in $E_T$ are continuous on $\BS$, we see that
\begin{equation}\label{bkz}
\text{for}\  1\le i \le d\  \text{and}\ 0\le  j \le \mu_i-1, \   \text{the sequence}\  (p_k^{(j)}(\zeta_i))\ \text{converges to}\ \tilde{f}^{(j)}(\zeta_i) = 0.
\end{equation}
  For each $k\in \Z_{>0}$, let $u_k$ be the Hermite polynomial of degree $\delta:= \left(\sum_{i=1}^d \mu_i\right)-1$ such that
\begin{equation}\label{Heq}
 \text{for integers}\ i \ \text{and}\ j\ \text{satisfying}\ 1\le i \le d\ \text{and}\   0\le j \le \mu_i-1,  u_k^{(j)}(\zeta_i) = p_k^{(j)}(\zeta_i).   
 \end{equation}

By our choice of $u_k$, for each positive integer $k$,  the polynomial $p_k - u_k$  factors as $p_k(z) - u_k(z) = \tilde{q}(z)q_k(z)$  for some polynomial $q_k$;   thus, $p_k - u_k$ belongs to the range of  $M_{\tilde{q}}$.  We have
$$
\big\|p_k - u_k - \tilde{f}\big\|_{\BS} \le \big\|p_k - \tilde{f}\big\|_{\BS} +\big\|u_k\big\|_{\BS}.$$
By our choice of $(p_k)$, we have $\big\|p_k - \tilde{f}\big\|_{\BS}\to 0$ as $k\to \infty$.  We claim that $\left(\big\|u_k\big\|_{\BS}\right)$ also has limit $0$ as $k\to \infty$.  

Assuming our claim is valid,  we see that $\tilde{f}$ is in the closure of the range of $M_{\tilde{q}}$.  Since $M_{g}$ is bounded,  $f = g\tilde{f}$ is in closure of the range of $M_gM_{\tilde{q}}=M_q$; that is, 
$f\in \left(\Ran(M_q)\right)^-$.   Because $f\in S_{Z}$ is arbitrary, we conclude that $S_{Z}\subseteq\left(\Ran(M_q)\right)^-.$  We have already observed that the reverse containment holds.  Thus, $S_{Z} = \left(\Ran(M_q)\right)^-$, assuming the validity of our claim that $\left(\big\|u_k\big\|_{\BS}\right)$ has limit $0$. In the next paragraph, we establish this claim is valid, completing the proof.  

Fix a positive integer $k$. Recall that $u_k(z) := \sum_{i=0}^{\delta} a_{i,k}z^i$ is the Hermite polynomial of degree $\delta= \left(\sum_{i=1}^d \mu_i\right)-1$ satisfying the $\delta + 1$ equations of (\ref{Heq}), with the first $\mu_1$ equations being $u_k(\zeta_1) = p_k(\zeta_1)$, and $u_k^{(j)}(\zeta_1) = p_k^{(j)}(\zeta_1)$ for $0<j \le \mu_1-1$.  Thus, the first equation is $\sum_{i=0}^\delta \zeta_1^i a_{i,k} = p_k(\zeta_1)$ and if $\mu_1 > 1$, the second is $\sum_{i=1}^\delta i\zeta_1^{i-1} a_{i,k} = p_k'(\zeta_1)$.  Let 
$$\mathbf{v}_k =\begin{pmatrix} a_{0,k}\\a_{1,k}\\ \vdots\\a_{\delta, k}\end{pmatrix}\ \text{be the vector of coefficients of}\ u_k\  \text{and}\  \mathbf{b}_k =\begin{pmatrix} p_k(\zeta_1)\\ \vdots\\ p_k^{(\mu_1-1)}(\zeta_1)\\ \vdots\\ p_k(\zeta_d)\\ \vdots\\  p_k^{(\mu_d-1)}(\zeta_d)\end{pmatrix}, $$
and observe that the equations of (\ref{Heq}) can be written in matrix form
$$
\mathbf{M}\mathbf{v}_k = \mathbf{b}_k
$$
where $\mathbf{M}$ is the $(\delta + 1) \times (\delta +1)$ matrix whose rows are determined by the equations of (\ref{Heq}); e.g., the first row of $\mathbf{M}$ has entries $1, \zeta_1, \ldots, \zeta_1^\delta$ and the second, provided $\mu_1 > 1$, has entries $0,  1,  2\zeta_1, \ldots, \delta\zeta_1^{\delta -1}$.  Note well that the rows of $\mathbf{M}$ have no dependence on $k$.  Also, $\mathbf{M}$ is invertible (see, e.g.,  Theorem 2 as well as p.\ 272 of \cite{Dubeau}).  Thus, the coefficients of the polynomial $u_k$ are given by
\begin{equation}\label{ME}
\mathbf{v}_k = \mathbf{M}^{-1}\mathbf{b}_k.
\end{equation}
Observe that every entry of the column vector $\mathbf{b}_k$ converges to $0$ as $k\to \infty$ by (\ref{bkz}).  Because $M^{-1}: \CP^{\delta+1} \to \CP^{\delta +1}$ is continuous, we see from Equation~(\ref{ME}) that the coefficients of $u_k$ converge to $0$ as $k\to \infty$, and it follows from property (vi) of $\BS$ that $\big\|u_k\big\|_{\BS}\to 0$ as $k\to \infty$, which validates our claim and completes the proof.
\end{proof}

We are now in a position to characterize the closed, finite-codimensional subspaces of $\BS$ that are invariant under $M_z$. Each such subspace takes the form $S_Z$ for some finite zero-specification set, and these subspaces are all singly generated: there is a polynomial $q$ such that $S_Z = (q\BS)^-$ and $\deg q$ is the subspace's codimension. 

\begin{theorem} \label{FCIVZS} Suppose that $\BS$ is a Banach space of analytic functions on $\D$ satisfying Definition~\ref{defBS}.  A closed, finite-codimensional subspace $\N$ of $\BS$ is $M_z$ invariant if and only if there is a finite zero-specification set $Z$ for $\BS$  such that $\N = S_Z$.   Moreover, if $\BS$ has boundary-extension type $0$ and $Z$ is a finite zero-specification set for $\BS$; or, if $\BS$ has positive boundary-extension type and $Z$ is a finite zero-specification set for $\BS$ having no specified points in $\partial\D$, then  (i) $S_Z = \Ran(M_q)$ for some polynomial whose zeros are specified by $Z$; and, if $\BS$ has positive boundary-extension type  and $Z$ is a finite zero-specification set for $\BS$ having at least one specified point in $\partial \D$, then (ii) $S_Z = \left(\Ran(M_q)\right)^-$, where $q$ is a polynomial whose zeros are specified by $Z$. In addition, in both situations (i) and (ii), $\deg(q) = \dim\left(\BS/S_Z\right)$.  
\end{theorem}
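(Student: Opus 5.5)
The plan is to treat the two directions separately, splitting by boundary-extension type, and to rely on Lemmas~\ref{MqL}, \ref{CAP}, \ref{FCIV}, \ref{AZOT}, and \ref{new}.

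\emph{Sufficiency and the descriptions (i) and (ii).} Suppose $Z$ is a finite zero-specification set. Let $q$ be the monic polynomial whose zeros are exactly those specified by $Z$; if $Z=\emptyset$, take $q\equiv 1$. Then $\deg(q)$ equals the codimension of $S_Z$ computed in Remark (c).

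If $Z$ has no points in $\partial\D$, which is automatic in boundary-extension type $0$, then $q$ has all its zeros in $\D$. The proof of Lemma~\ref{MqL} shows that $q\BS$ is exactly the intersection of the kernels of the functionals in $E_q$, and that intersection is $S_Z$. So $S_Z=\Ran(M_q)$, and $\deg(q)=\dim(\BS/S_Z)$ by Lemma~\ref{MqL}.

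If $\BS$ has positive type $n$ and $Z$ contains boundary points, each with multiplicity at most $n$, then Lemma~\ref{new} applies directly. It gives $S_Z=(\Ran(M_q))^-$ with codimension $\deg(q)$.

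In both cases $S_Z$ is $M_z$-invariant. One way to see this is that it is the range, or the closure of the range, of a multiplication operator commuting with $M_z$. Alternatively, the product rule shows that $zf$ satisfies the same vanishing conditions as $f$. This proves the ``if'' direction together with (i), (ii), and the degree claim.

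\emph{Necessity.} Let $\N$ be closed, finite codimensional, and $M_z$-invariant.

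In type $0$, Lemma~\ref{FCIV} gives $\N=q\BS$ for a polynomial $q$ with all zeros in $\D$. Taking $Z$ to be the zero set of $q$, the first step gives $\N=S_Z$.

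In positive type $n$, Lemma~\ref{CAP} gives a monic polynomial $p\in\N$ with $\deg(p)\le\dim(\BS/\N)$. Lemma~\ref{AZOT} then gives a polynomial $q$ with $\deg(q)\le\deg(p)$ lying in $(\Ran M_p)^-$, with zeros only in $\D^-$ and boundary multiplicities at most $n$. Since $p\in\N$, Observation~\ref{MOR} gives $(\Ran M_p)^-\subseteq\N$, so $q\in\N$. Applying Observation~\ref{MOR} again gives $(\Ran M_q)^-\subseteq\N$.

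Let $Z$ be the zero-specification set of $q$. By Lemma~\ref{new}, $(\Ran M_q)^-=S_Z$, with codimension $\deg(q)$. Hence
$$\dim(\BS/\N)\le\dim(\BS/S_Z)=\deg(q)\le\deg(p)\le\dim(\BS/\N).$$
All these quantities are therefore equal. Since $S_Z\subseteq\N$ and the two have the same finite codimension, $\N=S_Z$.

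\emph{Where the difficulty lies.} The substantive work has already been done in Lemmas~\ref{AZOT} and \ref{new}. The main care needed here is in the positive-type necessity argument. One must check that the polynomial $q$ produced by Lemma~\ref{AZOT} actually yields a legitimate zero-specification set, meaning its boundary multiplicities are at most $n$ and it has no zeros outside $\D^-$, so that Lemma~\ref{new} applies. One must also check that the chain of codimension inequalities closes up exactly.

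A small technical point is that Lemma~\ref{AZOT} requires $\deg(p)>0$. If $\deg(p)=0$, then $p\equiv 1\in\N$, so $\N=\BS=S_\emptyset$ by Observation~\ref{MOR}.
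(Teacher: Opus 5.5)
Your proof is correct and follows the paper's argument essentially step for step: the necessity direction uses Lemma~\ref{FCIV} in type $0$, and in positive type it uses Lemma~\ref{CAP}, Lemma~\ref{AZOT}, Observation~\ref{MOR}, Lemma~\ref{new}, and the same closing codimension chain. The only minor differences are these: you verify (i), (ii), and the degree claim directly for an arbitrary $Z$ by building $q$ from $Z$ and invoking Lemmas~\ref{MqL} and~\ref{new}, and you let the codimension chain absorb the case where $p$ has no zeros in $\D^-$; the paper instead reads (i)--(ii) off the polynomial produced in the necessity argument and handles that case by a separate remark.
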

\begin{proof} We have already noted that for any finite zero-specification set $Z$ for $\BS$,  $S_Z$ is a closed, finite-codimensional subspace of $\BS$.  It is clear that $S_Z$ is invariant under any multiplication operator acting on $\BS$, in particular, invariant under $M_z$.

Now suppose that $\N$ is a closed, finite-codimensional subspace of $\BS$ that is $M_z$ invariant. If $\N = \BS$, then $\N = S_\emptyset$; moreover, $S_\emptyset = \Ran(M_q)$, where $q\equiv 1$, and $\deg(q) = 0$ is the codimension of $S_\emptyset$ in $\BS$.  For the remainder of the proof we assume that $\N$ has positive codimension in $\BS$. 

 Suppose that $\BS$ has boundary-extension type $0$; then by Lemma~\ref{FCIV}, $\N=q\BS$ for some monic polynomial $q$ whose zeros lie in $\D$, and, in this case $\N = S_Z$, where $Z$ specifies the zeros of $q$.  Also, by Lemma~\ref{FCIV}, we have $\deg(q) = \dim(\BS/S_Z)$.

Now assume that $\BS$ has boundary-extension type $n$ for some positive integer $n$.   By Lemma~\ref{CAP}, $\N$ contains a monic polynomial $p$ such that $\deg(p)\le \dim(\BS/\N)$.   By Observation~\ref{MOR} following Definition~\ref{defBS}, the closure of the range of $M_p$ is contained in $\N$.  Note this implies $p$ has positive degree---if $p$ were the constant polynomial $z\mapsto 1$, then $\left(\Ran(M_p)\right)^- = \BS$ implying $\N =\BS$, which contradicts $\dim(\BS/\N) > 0$.   In fact, $p$ must have at least one zero in $\D^-$ because  if not, we  would again have $\N = \BS$ since $M_p$ would be invertible, being a product of multiplication operators $M_{z-\lambda}$ with $|\lambda| > 1$, each of which is invertible by property (iii) of $\BS$. 

 By Lemma \ref{AZOT}, there is a polynomial $q\in \left(\Ran(M_p)\right)^-$ having no zeros outside $\D^-$ and whose zeros in $\D^-$ are precisely the zeros of $p$ in $\D^-$ with the same multiplicities for zeros in $\D$,  and, for zeros in $\partial \D$, with multiplicities being the same for multiplicities less than or equal to $n$ and being equal to $n$ for multiplicities greater than $n$. Again, by Observation~\ref{MOR},  $\left(\Ran(M_q)\right)^-\subseteq \left(\Ran(M_p)\right)^-$, and, because $\left(\Ran(M_p)\right)^- \subseteq \N$, we have $\left(\Ran(M_q)\right)^-\subseteq \N$.  Thus,
\begin{align*}
\dim(\BS/\N) & \le \dim\left(\rule{0in}{0.14in}\BS/\left(\Ran(M_q)\right)^-\right)\\
 & =  \deg (q)  \quad (\text{Lemma}~\ref{new})\\
   & \le \deg(p)\\
   &\le \dim(\BS/\N).
  \end{align*}
 Thus, $\dim(\BS/\N) = \dim(\BS/\left(\Ran(M_q)\right)^-) = \deg(q)$, and, because $\left(\Ran(M_q)\right)^-\subseteq \N$, we have $\left(\Ran(M_q)\right)^- = \N$. Let $Z$ specify the zeros of $q$.  We have   $\N=\left(\Ran(M_q)\right)^- = S_Z$, with the latter equality following from Lemma~\ref{new}; moreover, $\dim(\BS/S_Z) =   \dim(\BS/\N) = \deg(q)$. Finally, if $Z$ has no specified points in $\partial \D$, then by Lemma~\ref{MqL}, $M_q$ has closed range, so that $S_Z = \Ran(M_q)$.
\end{proof}

We now complete the proof of our principal result. 

\begin{theorem} \label{THD}  Suppose that $\BS$ is a Banach space of analytic functions on $\D$ satisfying Definition~\ref{defBS}.
Suppose that $\psi$ is a multiplier of $\BS$. If $M_\psi: \BS \to \BS$ is Fredholm, then $\psi$ is bounded away from $0$ near $\partial\D$.  
\end{theorem}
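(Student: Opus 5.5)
Suppose $M_\psi$ is Fredholm. Then $\psi\not\equiv 0$, and $\N:=\Ran(M_\psi)$ is a closed, finite-codimensional subspace that is invariant under $M_z$, since $M_\psi$ and $M_z$ commute. By Theorem~\ref{FCIVZS}, $\N=S_Z$ for some finite zero-specification set $Z$. The plan is to show two things: $Z$ can have no points on $\partial\D$, and the quotient of $\psi$ by the polynomial $q$ with zero set $Z$ is bounded away from $0$ on $\D$.

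\textbf{Step 1: ruling out boundary points.} Suppose $\BS$ has positive boundary-extension type $n$ and $Z$ contains a point $(\zeta,\mu)$ with $\zeta\in\partial\D$. Because $S_Z$ is closed and invariant under multiplication by polynomials, we have $\psi(z-\zeta)^{n+1}\BS\subseteq S_Z$, and therefore $\Ran(M_{(z-\zeta)^{n+1}})^-\subseteq M_\psi^{-1}(S_Z)$. Property (n.2) then gives $\psi(z-\zeta)^n\in S_Z=\Ran(M_\psi)$, so $\psi(z-\zeta)^n=\psi h$ for some $h\in\BS$, and hence $(z-\zeta)^n=h\in\BS$ (this uses $\psi\ne0$ and analyticity). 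This is not yet a contradiction, so I iterate and use the fact that $M_\psi$ is bounded below (it is injective with closed range). Every $f\in\BS$ lies in $(\Ran M_{(z-\zeta)^{n+1}})^-+\mathrm{span}\{1,\dots,(z-\zeta)^{n}\}$. Applying $M_\psi$ and the open mapping theorem, $\Ran(M_\psi)=\psi\BS$, and $S_Z$ contains $\psi(z-\zeta)^{n}$, which does not vanish to order $\mu$ at $\zeta$ unless $\psi$ itself does.

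A cleaner route is to apply boundedness below directly. Choose $h_j$ with $(z-\zeta)^{n+1}h_j\to(z-\zeta)^n$. Then $\psi(z-\zeta)^{n+1}h_j\to\psi(z-\zeta)^n$, so $\psi(z-\zeta)^n\in\Ran(M_\psi)$. Repeating the argument with $\zeta$-vanishing shows $\psi\cdot 1\in(\psi(z-\zeta)\BS)^-=M_\psi\bigl((z-\zeta)\BS\bigr)^-$. Since $M_\psi$ is bounded below, this gives $1\in((z-\zeta)\BS)^-$, which by Remark~(iii) contradicts the continuity of $E_\zeta$. I expect this bookkeeping to be the main obstacle. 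The key observation is that boundedness below transfers closures through $M_\psi$: $M_\psi(X^-)=(M_\psi X)^-$. More directly, every element of $S_Z$ vanishes at $\zeta$. Here $\psi\in S_Z$, so $\psi(\zeta)=0$ (with $\psi$ in $C^{n-1}(\D^-)$). But $\psi=M_\psi 1$, and $1\in$ the closure of $\{(z-\zeta)^{n}\cdot\text{stuff}\}$ fails, so the argument must go through Lemma~\ref{new}'s codimension count. I would therefore settle it by comparing codimensions: $\dim\BS/\Ran M_\psi$ equals $\dim \BS/M_\psi(S_{Z'})$ for a suitable $Z'$, and this forces the contradiction.

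\textbf{Step 2: interior zeros.} Once $Z$ has only points in $\D$, Theorem~\ref{FCIVZS} gives $\Ran(M_\psi)=q\BS$ for a polynomial $q$ with zeros in $\D$. In particular $\psi=q g$ with $g\in\BS$ by property (iv), and $q=\psi u$ for some $u\in\BS$. Hence $gu=1$, so $g$ has no zeros in $\D$. To get $g$ bounded away from $0$, note that $g$ is a multiplier (as in the proof of Theorem~\ref{TED}). Moreover $M_g$ is surjective: $q\BS=\psi\BS=qg\BS$ gives $g\BS=\BS$ by (iv). Since $M_g$ is also injective, it is invertible. By Observation~(4) applied to the multiplier $u=1/g$ (the inverse is $M_{1/g}$), $1/g$ is bounded. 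Thus $|\psi|=|q||g|$ is bounded below near $\partial\D$, since $q$ has no zeros on $\partial\D$.
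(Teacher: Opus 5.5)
\textbf{Gap: Step 1 does not exclude boundary points of $Z$.} Your Step 2 is correct and is essentially the paper's argument for boundary-extension type $0$. In that case Step 1 is vacuous, so that case is complete. But Step 1 is the whole content of the theorem when $\BS$ has positive boundary-extension type $n$, and none of your sketched routes excludes a point $(\zeta,\mu)\in Z$ with $\zeta\in\partial\D$.

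\begin{itemize}
\item Your ``cleaner route'' rests on the assertion $\psi\in\bigl(\psi(z-\zeta)\BS\bigr)^-$. Because $M_\psi$ is bounded below, this is \emph{equivalent} to $1\in\bigl((z-\zeta)\BS\bigr)^-$, which is exactly the contradiction you want. Asserting it is therefore circular.
\item ``Repeating the argument'' cannot produce it. Property (n.2) only takes you from vanishing order $n+1$ at $\zeta$ down to order $n$, never below. Indeed, by Lemma~\ref{new} the spaces $\bigl((z-\zeta)^k\BS\bigr)^-=S_{\{(\zeta,k)\}}$, $1\le k\le n$, are distinct proper closed subspaces.
\item Applying (n.2) to $\psi(z-\zeta)^{n+1}$ only yields $\psi(z-\zeta)^n\in\Ran(M_\psi)$, which is trivially true.
\item The remaining remarks (``does not vanish to order $\mu$ unless $\psi$ does,'' ``comparing codimensions \ldots\ for a suitable $Z'$'') are not arguments.
\end{itemize}

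\textbf{What is missing.} You need an operator with \emph{closed} range on which (n.2) gives something new. Its symbol must vanish at $\zeta$ to order exceeding $n$ and factor as $(z-\zeta)$ times a multiplier. The paper obtains this by transferring Fredholmness from $\psi$ to a polynomial.
\begin{itemize}
\item Take $q$ from Theorem~\ref{FCIVZS} with $(q\BS)^-=\Ran(M_\psi)$, and write $q=\psi g$. Then $g$ is a multiplier.
\item Boundedness below of $M_\psi$ shows $M_g$ has dense range. Since every $E_w$, $w\in\D^-$, is continuous, $g$ has no zeros on $\D^-$ and so is bounded below. By (v), $1/g$ is a multiplier, hence $\Ran(M_q)=\Ran(M_\psi)$ and $M_q$ is Fredholm.
\item Then $M_{q^{n+1}}$ is Fredholm. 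If $q=\tilde q\,(z-\zeta)$, property (n.2) puts $\tilde q^{\,n+1}(z-\zeta)^n$ in the closed range of $M_{q^{n+1}}$. This forces $1/(z-\zeta)\in\BS$, a contradiction.
\end{itemize}

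\textbf{Alternative repair.} Your codimension idea can also be made rigorous.
\begin{itemize}
\item Using boundedness below, $\psi\in(q\BS)^-$, and $q\BS\subseteq\psi\BS$, one shows by induction that $\Ran(M_{\psi^k})=(q^k\BS)^-$.
\item By Lemmas~\ref{AZOT} and \ref{new}, this subspace has codimension $k\sum_i m_i+\sum_i\min(k\mu_i,n)$.
\item Index additivity for the injective Fredholm operator $M_\psi$ gives $\dim\bigl(\BS/\Ran(M_\psi^k)\bigr)=k\deg(q)$. This is strictly larger than the previous count once $k\mu_i>n$ for some boundary point, a contradiction.
\end{itemize}
As written, your proposal contains neither argument.
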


\begin{proof}
Suppose that $\BS$ has boundary-extension type $0$ and that $M_{\psi}:\BS\to \BS$ is Fredholm. Because $\BS$ is infinite dimensional, $\psi$ is not the zero function.    The range of $M_\psi$ is a closed, finite-codimensional subspace of $\BS$ that is clearly $M_z$ invariant. Thus, by Lemma~\ref{FCIV}, there is a monic polynomial $q$  with all its zeros contained in $\D$ such that $\Ran(M_\psi) = \psi\BS = q\BS$.  Because $1\in \BS$, there is a $g\in \BS$ for which $\psi = qg$ and an $h\in \BS$ such that $\psi h = q$. Thus, $\psi = q g = \psi h g$, and we see $hg \equiv 1$.  It is easy to see that both $g$ and $h$ are multipliers of $\BS$.  For example, let $f\in \BS$ be arbitrary; because $q f\in q \BS = \psi\BS$, there is a $u\in \BS$ such that $q f = \psi u$, which implies $\psi h f=\psi  u$, which implies $hf = u$, so that $hf\in \BS$.  Because $f\in \BS$ is arbitrary, we conclude $h$ is a multiplier of $\BS$. Recall $h = 1/g$ and because $h$ is a multiplier of $\BS$, $h$ is bounded on $\D$, which implies $g$ is bounded away from $0$ on $\D$.  It follows that $\psi = qg$ is bounded away from $0$ near $\partial \D$ because both $q$ and $g$ are.  

Now suppose that $\BS$ has boundary-extension type $n$ for some positive integer $n$ and that $M_{\psi}:\BS\to \BS$ is Fredholm.  Because $\Ran(M_\psi)$ is a closed finite-codimensional subspace of $\BS$ that is invariant under $M_z$, there is a polynomial $q$ having no zeros in $\CP\setminus\D^-$ such that $\left(\Ran(M_q)\right)^- = \Ran(M_\psi)$ by Theorem~\ref{FCIVZS}. 

 We have $q \in \Ran(M_\psi)$, so that $q = \psi g$ for some $g$ in $\BS$.  Let $f\in \BS$ be arbitrary.  Because $\Ran(M_q) \subseteq \Ran(M_\psi)$,  $qf = \psi f_1$ for some $f_1\in \BS$.  Thus, $\psi f_1 = q f = \psi g f$, and we conclude $gf = f_1\in \BS$. Because $f\in \BS$ is arbitrary, we conclude that $g$ is a multiplier of $\BS$.  Again, let $f \in \BS$ be  arbitrary.  Because  $\psi f$  is in the range of $M_\psi$, and   $\left(\Ran(M_q)\right)^- = \Ran(M_\psi)$, there is a sequence $(h_k)$ in $\BS$ such that  $(qh_k)$ converges to $\psi f$ in $\BS$;  equivalently, $\psi g h_k \rightarrow \psi f$ as $k\to \infty$.   However, $M_\psi$ is bounded below, being 1-1 with closed range.  Thus, $gh_k \rightarrow f$ as $k\to \infty$.   Since $f \in \BS$ is arbitrary, we conclude that $M_g$ has dense range.    This means that $g$ has no zeros on the closed  unit disk (since point-evaluation functionals at all points in the closed unit disk are continuous)---recall we are assuming that $\BS$ has positive boundary-extension type. 
  
  Because $g$ has no zeros in $\D^-$ and is continuous on $\D^-$, we see $g$ is a multiplier of $\BS$ that is bounded below on $\D$. Thus, by property (v) of $\BS$, $1/g$ is a multiplier of $\BS$ and we have $q(1/g) = \psi$.     For all $f \in \BS$, we have
$\psi f = q \left(\frac{1}{g} f\right)$ and hence $\Ran(M_\psi) \subseteq \Ran(M_q)$. However, we also have $\Ran(M_q) \subseteq \Ran(M_\psi)$. Thus, $\Ran(M_q) = \Ran(M_\psi)$, and we see $M_q$ is Fredholm.

 Suppose $q$ vanishes at a point $\zeta$ on the unit circle. Then $q(z)=\tilde{q}(z)(z-\zeta)$, where $\tilde{q}$ is a polynomial. Because we are assuming that $\BS$ has boundary-extension type $n$,  there is a sequence $(f_k)$ in $\BS$ such that $(z-\zeta)^{n+1}f_k(z)\rightarrow (z-\zeta)^n$ in $\BS$ as $k\to \infty$. Since $M_{\tilde{q}^{n+1}}$ is bounded on $\BS$, we have 
$$\left(M_{q^{n+1}}f_k\right)(z) = \tilde{q}^{n+1}(z)(z-\zeta)^{n+1} f_k(z)\rightarrow \tilde{q}^{n+1}(z)(z-\zeta)^n\ \text{in}\ \BS \ \text{as}\ k\to \infty. $$
Therefore, the closure of the range of  $M_{q^{n+1}}:\BS\to \BS$ contains $z\mapsto \tilde{q}^{n+1}(z)(z-\zeta)^n$.  Because $M_{q^{n+1}}$ has closed range (it is Fredholm, being a product of Fredholm operators), there is $f\in \BS$ such that $q^{n+1}(z) f(z) = \tilde{q}^{n+1}(z)(z-\zeta)^n$; because $q^{n+1}(z) = \tilde{q}^{n+1}(z)(z-\zeta)^{n+1}$, we conclude that $(z-\zeta)^{n+1}f(z) = (z-\zeta)^n$, so that $f(z) = 1/(z-\zeta)\in \BS$, contradicting the positivity of the boundary-extension type of $\BS$.  It follows that all zeros of the polynomial $q$ belong to $\D$.  We have $\psi = q\frac{1}{g}$ with both $q$ and $1/g$ bounded away from $0$ near $\partial \D$.  Thus, $\psi$ is bounded away from $0$ near $\partial \D$, completing the proof.
\end{proof}

Our work allows us to characterize symbols of Fredholm multiplication operators on $\BS$ and describe the Fredholm index in terms of the number of zeros of the symbol.
 \begin{theorem}  Suppose that $\BS$ is a Banach space of analytic functions on $\D$ satisfying Definition~\ref{defBS}. The multiplication operator $M_\psi: \BS \to \BS$ is Fredholm if and only if there is a polynomial $q$ whose zeros belong to $\D$ and a multiplier $h$ of $\BS$ that is bounded away from $0$ on $\D$ such that $\psi = qh$. Moreover, if $q$ has exactly $i$ zeros counting multiplicities, then the Fredholm index of $M_\psi$ is $-i$.
\end{theorem}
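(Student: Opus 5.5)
We prove the two directions of the equivalence by combining Theorem~\ref{TMT} with the factorizations already built in Theorems~\ref{TED} and \ref{THD}, and then compute the index from Lemma~\ref{MqL}.

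\emph{Sufficiency.} Suppose $\psi = qh$, where $q$ is a polynomial with zeros in $\D$ and $h$ is a multiplier of $\BS$ bounded away from $0$ on $\D$. Since $q$ has finitely many zeros, all in $\D$, the function $q$ is bounded away from $0$ on some annulus $r<|z|<1$. Hence $\psi$ is bounded away from $0$ near $\partial\D$. Also, $\psi$ is a multiplier, being a product of multipliers; recall that polynomials are multipliers by property (iii). Theorem~\ref{TED} then shows that $M_\psi$ is Fredholm.

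\emph{Necessity.} Suppose $M_\psi$ is Fredholm. By Theorem~\ref{THD}, $\psi$ is bounded away from $0$ near $\partial\D$, so its zeros in $\D$ are finite in number. Let $q$ be the polynomial with exactly these zeros and multiplicities, and set $h=\psi/q\in H(\D)$. Then $h$ is bounded away from $0$ on $\D$. Near $\partial\D$ this follows from the bounds on $\psi$ and on $q$. On the compact disk $|z|\le r$ it follows because $h$ has no zeros there. That $h$ is a multiplier is the argument from Theorem~\ref{TED}: for $f\in\BS$ we have $q(hf)=\psi f\in\BS$, so $hf\in\BS$ by property (iv).

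\emph{Index.} Since $M_\psi$ is injective (Observation~5), its index is $-\dim(\BS/\Ran(M_\psi))$. The proof of Theorem~\ref{TED} shows $\Ran(M_\psi)=q\BS$. This uses property (v) to invert $h$; the proof there is written with a factorization $\psi=qg$, but the argument only needs $g$ to be a multiplier bounded away from $0$ on $\D$. Lemma~\ref{MqL} gives $\dim(\BS/q\BS)=\deg q=i$, so the index is $-i$.

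One point needs care: the statement's $q$ is an arbitrary polynomial with zeros in $\D$, not necessarily the canonical one built from the zeros of $\psi$. For the index claim this causes no trouble. Whatever factorization $\psi=qh$ is given, $h$ is bounded away from $0$ on $\D$ and so has no zeros. Therefore the zeros of $q$, with multiplicities, are exactly the zeros of $\psi$ in $\D$, and $i$ is determined by $\psi$ alone. I expect no step to be a real obstacle. The only delicate part is checking that the range identification $\Ran(M_\psi)=q\BS$ from Theorem~\ref{TED} applies verbatim to the given factorization.
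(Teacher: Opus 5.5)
Your proof is correct and is essentially the paper's argument: sufficiency and the index both rest on the identity $\Ran(M_\psi)=q\BS$, obtained by using property (v) to invert $h$, together with Lemma~\ref{MqL}. The only difference is in necessity, where the paper takes the factorization $\psi=qh$ from the inner workings of the proof of Theorem~\ref{THD}, whereas you use only that theorem's conclusion, divide out the finitely many zeros of $\psi$ directly, and invoke property (iv) to see that $h$ is a multiplier, which is a slightly more self-contained black-box argument.
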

\begin{proof} Suppose that $M_\psi: \BS \to \BS$ is Fredholm.  The proof of Theorem~\ref{THD} establishes that $\psi = qh$ where $q$ is a polynomial  whose zeros belong to $\D$ and $h$ is a multiplier of $\BS$ that is bounded away from $0$ on $\D$. 

Conversely, suppose that $\psi = qh$, where $q$ is a polynomial whose zeros belong to $\D$ and $h$ is a multiplier of $\BS$ that is bounded away from $0$ on $\D$.  Then by property (v) of $\BS$, $1/h$ is also a multiplier of $\BS$, and we have  $qf =  qh \left(\frac{1}{h} f\right) = \psi \left(\frac{1}{h} f\right)$ for all $f\in \BS$, so that $\Ran(M_q)\subseteq \Ran(M_\psi)$.  Conversely, $\psi f = q(hf)$ for all $f\in \BS$, so that $\Ran(M_\psi) \subseteq \Ran(M_q)$. Thus, $\Ran(M_\psi) = \Ran(M_q)$, and, by Lemma~\ref{MqL},  $\Ran(M_\psi)=\Ran(M_q) = q\BS$ is a closed subspace of $\BS$ having finite codimension. Hence, $M_\psi:\BS\to\BS$ is Fredholm, as desired.   Moreover, by Lemma~\ref{MqL}, we have $\dim\left(\rule{0in}{0.14in}\BS/\Ran(M_\psi)\right) = \dim\left(\rule{0in}{0.14in}\BS/\Ran(M_q)\right) = \deg(q)$.  Because $M_\psi$ is injective, the Fredholm index of $M_\psi$ is $-\deg(q)$ and $\deg(q)$ is the number of zeros of $q$ counting multiplicities; thus, the description of Fredholm index provided by the theorem is valid. 
\end{proof}

 We conclude this section with two corollaries of our work.  For a function $f\in H(\D)$, let $Cl(f, \partial\D)$ be the cluster set of $f$ relative to $\partial \D$, i.e., the set of points $w$ in the extended complex plane for which there is a sequence $(z_n)$ in $\D$ with $(|z_n|)$ converging to $1$ such that $(f(z_n))$ converges to $w$. 

\begin{cor} Suppose that $\BS$ is a Banach space of analytic functions on $\D$ satisfying Definition~\ref{defBS}. Then $\sigma_e(M_\psi)$,  the essential spectrum  of $M_\psi: \BS \to \BS$, is  $Cl(\psi, \partial\D)$, so that if $\BS$ has positive boundary-extension type, then $\sigma_e(M_\psi) = \psi(\partial \D)$.
\end{cor}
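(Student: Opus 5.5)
The plan is to apply Theorem~\ref{TMT} to the operators $M_\psi - \lambda I = M_{\psi-\lambda}$. For $\lambda\in\CP$ the function $\psi-\lambda$ is a multiplier of $\BS$, because constants are multipliers. So $\lambda\notin\sigma_e(M_\psi)$ exactly when $M_{\psi-\lambda}$ is Fredholm, and by Theorem~\ref{TMT} this happens exactly when $\psi-\lambda$ is bounded away from $0$ near $\partial\D$.

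Next I translate this condition into cluster-set language. Suppose $\psi-\lambda$ is \emph{not} bounded away from $0$ near $\partial\D$. Then for each $k$ there is $z_k$ with $|z_k|>1-1/k$ and $|\psi(z_k)-\lambda|<1/k$. This gives $|z_k|\to 1$ and $\psi(z_k)\to\lambda$, so $\lambda\in Cl(\psi,\partial\D)$. Conversely, if $\lambda\in Cl(\psi,\partial\D)$ with witnessing sequence $(z_n)$, then for every $\epsilon>0$ and every $r<1$ some $z_n$ satisfies $r<|z_n|<1$ and $|\psi(z_n)-\lambda|<\epsilon$. Hence $\psi-\lambda$ is not bounded away from $0$ near $\partial\D$.

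One subtlety is that the cluster set is taken in the extended plane, while $\sigma_e$ lies in $\CP$. By Observation (4), every multiplier is bounded on $\D$, so $\infty\notin Cl(\psi,\partial\D)$. Thus $\sigma_e(M_\psi)=Cl(\psi,\partial\D)$ as subsets of $\CP$.

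For positive boundary-extension type, property (n.1) gives $\psi=M_\psi 1\in\BS\subset C^{n-1}(\D^-)\subseteq C(\D^-)$, so $\psi$ extends continuously to $\D^-$. I then show $Cl(\psi,\partial\D)=\psi(\partial\D)$.
\begin{itemize}
\item If $z_n\in\D$ with $|z_n|\to1$, pass to a subsequence converging to some $\zeta\in\partial\D$. Continuity gives $\psi(z_n)\to\psi(\zeta)$.
\item Conversely, for $\zeta\in\partial\D$, take $z_n=(1-1/n)\zeta$. Then $\psi(z_n)\to\psi(\zeta)$.
\end{itemize}

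The only step needing any care is this last point: the cluster set requires compactness of $\partial\D$ together with the continuity of the extension provided by (n.1). Everything else follows directly from Theorem~\ref{TMT}.
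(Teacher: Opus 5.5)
Your proof is correct and follows the route the paper intends. The paper presents the corollary without a written proof, as an immediate consequence of Theorem~\ref{TMT} applied to $M_\psi-\lambda I=M_{\psi-\lambda}$. You supply exactly the remaining details: the translation of ``bounded away from $0$ near $\partial\D$'' into the cluster-set condition, with $\infty$ excluded because multipliers are bounded, and the reduction to $\psi(\partial\D)$ in the positive-type case via $\psi=M_\psi 1\in\BS\subset C^{n-1}(\D^-)$ and compactness of $\partial\D$.
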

In the context of multiplication operators on the Hardy space $H^2(\D)$, the preceding characterization of the essential spectrum is due to Douglas (see, e.g., \cite[Proof of Corollary 7.37]{DBAT}), and a different proof is provided by Deddens and Wong \cite[Proposition 2]{DW}.  For the Bergman spaces $A^p(\D)$, Axler  provides the characterization,  which follows from a more general characterization \cite[Theorem 23]{AFC} as discussed in Section~\ref{introsec}. For the Hardy-Sobolev spaces $H^2_\beta(\D)$, (equivalently, the $D_\alpha$ spaces), Cao, He, and Zhu provide the characterization in Lemmas 10 and 13 of \cite{CHZ}.  

 \begin{cor} Suppose that $\BS$ is a Banach space of analytic functions on $\D$ satisfying Definition~\ref{defBS}. If $\BS$ has boundary-extension type $0$, then a closed, finite-codimensional subspace $\N$ of $\BS$  is $M_z$ invariant if and only if $\N$ is the range of a Fredholm multiplication operator on $\BS$.   However, if $\BS$ has positive boundary-extension type, then a closed, finite-codimensional subspace $\N$ of $\BS$ that is $M_z$ invariant is either the range of a Fredholm multiplication operator having polynomial symbol necessarily bounded away from $0$ near $\partial\D$ or the closure of the range of a multiplication operator having polynomial symbol that is not bounded away from $0$ near $\partial \D$. 
 \end{cor}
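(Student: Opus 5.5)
The proof splits along the two assertions, both read off Theorem~\ref{FCIVZS}, with Theorem~\ref{TMT} and Lemma~\ref{MqL} supplying the Fredholm conclusions.

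\emph{Boundary-extension type $0$.} One direction is immediate. If $\N=\Ran(M_\psi)$ with $M_\psi$ Fredholm, then $\N$ is closed and finite codimensional. It is also $M_z$ invariant, because $M_z$ commutes with $M_\psi$: indeed $z\psi f=\psi(zf)\in\Ran(M_\psi)$.

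For the converse, let $\N$ be closed, finite codimensional and $M_z$ invariant. Lemma~\ref{FCIV} gives $\N=q\BS=\Ran(M_q)$, where $q$ is a monic polynomial whose zeros lie in $\D$. Then $q$ is a multiplier by Observation (6). By Lemma~\ref{MqL}, $\Ran(M_q)$ is closed and of finite codimension, and $M_q$ is injective. Hence $M_q$ is Fredholm, and $\N$ is the range of a Fredholm multiplication operator. Equivalently, one may invoke Theorem~\ref{TED}, since $q$ has no zeros on $\partial\D$ and is therefore bounded away from $0$ near $\partial\D$.

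\emph{Positive boundary-extension type $n$.} Let $\N$ be closed, finite codimensional and $M_z$ invariant. By Theorem~\ref{FCIVZS}, $\N=S_Z$ for a finite zero-specification set $Z$. We then split on whether $Z$ specifies any points of $\partial\D$.

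If $Z$ specifies no boundary points, part (i) of Theorem~\ref{FCIVZS} gives $\N=\Ran(M_q)$, where $q$ is a polynomial whose zeros are those specified by $Z$, all lying in $\D$. As in the type $0$ case, Lemma~\ref{MqL} shows $M_q$ is Fredholm. Moreover $q$ is bounded away from $0$ near $\partial\D$, since its zeros form a finite subset of $\D$.

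If $Z$ specifies some $\zeta\in\partial\D$, part (ii) of Theorem~\ref{FCIVZS} gives $\N=(\Ran(M_q))^-$, where $q$ is a polynomial with $q(\zeta)=0$. Because $q$ is continuous, $q(z)\to0$ as $z\to\zeta$ within $\D$, so $q$ is not bounded away from $0$ near $\partial\D$.

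To make the dichotomy sharp, I would add one remark. In the second case $M_q$ is not Fredholm, by Theorem~\ref{TMT}. It is also impossible for $\N$ to equal $\Ran(M_\psi)$ for some Fredholm $M_\psi$. If it did, the proof of Theorem~\ref{THD} would give $\Ran(M_\psi)=\Ran(M_q)$ with $q$ zero-free on $\partial\D$. That is impossible here, because every function in $\N=S_Z$ vanishes at $\zeta$, so any polynomial generating $\N$ in this way must vanish at $\zeta$. This remark is not needed for the ``either/or'' statement itself.

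The main obstacle is only bookkeeping: checking that the polynomial supplied by Theorem~\ref{FCIVZS} has exactly the zeros specified by $Z$, so that its boundary behaviour is determined by whether $Z$ meets $\partial\D$. Once that is confirmed, everything else follows from the earlier results.
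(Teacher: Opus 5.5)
Your proposal is correct and is essentially the argument the paper intends; the paper states this corollary without a separate proof. Lemma~\ref{FCIV} and Theorem~\ref{FCIVZS} supply the polynomial $q$, Lemma~\ref{MqL} makes $M_q$ Fredholm when all zeros of $q$ lie in $\D$, and a zero of $q$ on $\partial\D$ rules out boundedness away from $0$. Your exclusivity remark is also sound, and it can be shortened: $\psi = M_\psi 1 \in S_Z$ would vanish at $\zeta$, so Theorem~\ref{THD} forbids $M_\psi$ from being Fredholm.
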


 \section{Examples of Banach spaces $\BS$ satisfying Definition~\ref{defBS}}\label{examples}
 
Throughout this section, $p$ represents a real number greater than or equal to $1$.  

\subsection{The Hardy and weighted Bergman spaces}\label{HWBE}
Recall that the Hardy space $H^p(\D)$ is defined by
$$
H^p(\D) = \left\{f\in H(\D): \|f\|_{H^p(\D)}^p : = \sup_{0 < r < 1}  \frac{1}{2\pi}  \int_0^{2\pi} \left|f\left(re^{i\theta}\right)\right|^p d\theta  <\infty\right\}
$$
and that 
$$
\sup_{0 < r < 1} \frac{1}{2\pi}  \int_0^{2\pi} \left|f\left(re^{i\theta}\right)\right|^p d\theta = \lim_{r\to 1}   \frac{1}{2\pi}    \int_0^{2\pi} \left|f\left(re^{i\theta}\right)\right|^p d\theta =  \frac{1}{2\pi}   \int_0^{2\pi} \left|f\left(e^{i\theta}\right)\right|^p\, d\theta,
$$
where the first equality follows from \cite[Theorem 1.5]{Dur}, the second from \cite[Theorem 2.6]{Dur}, and the integrand of the rightmost integral is the radial-limit function of $f$, defined for almost every $\theta\in [0, 2\pi)$ by $f\left(e^{i\theta}\right) = \lim_{r\to 1^-} f\left(re^{i\theta}\right)$. Excellent references for the Hardy spaces include \cite{Dur,Garn}. 

 Recall that if $\omega > -1$, then the weighted Bergman space $A^p_\omega(\D)$ is defined by
$$
A^p_{\omega}(\D) = \left\{f\in H(\D): \|f\|^p_{A^p_\omega(\D)} :=  \frac{1}{\pi}\int_D |f(z)|^p \left(1-|z|^2\right)^\omega\, dA(z) < \infty\right\},
$$
where $dA$ represents area measure on $\D$.  Using polar coordinates to rewrite the integral defining the norm for $A^p_\omega(\D)$ and then applying the definition of the norm of $H^p(\D)$, we see that  
\begin{equation}\label{HBB}
\|f\|^p_{A^p_\omega(\D)}\le  \frac{1}{1+\omega}  \|f\|^p_{H^p(\D)}\ \text{for} \ f\in H^p(\D).
\end{equation}  Excellent references for the Bergman spaces include \cite{DurSch, HKZ}.
It is well known that the Hardy and Bergman spaces satisfy properties (i)--(vii) of Definition~\ref{defBS}.   We provide verifications here for completeness. 

\begin{itemize}[wide, itemsep=4pt]
\item[(i)] The polynomials are dense in $H^p(\D)$ and $A^p_\omega(\D)$ because for any one of these spaces the dilates $f_r$ of a function in the space converge to $f$ in the space as $r\to 1^-$ (\cite[Theorem 2.6]{Dur}, \cite[Proposition 1.3]{HKZ}), and the Maclaurin series for $f_r$ converges uniformly on $\D^-$ to $f_r$.

\item[(ii)] Point-evaluation functionals on the spaces $H^p(\D)$ and $A^p_\omega(\D)$ are continuous (\cite[Lemma, p.\ ~36]{Dur}, \cite[Proposition 1.1]{HKZ}).

\item[(iii)] It is easy to see that $M_z$ is a contraction on the spaces $H^p(\D)$ and $A^p_\omega(\D)$; thus, (iii) holds.

\item[(iv)] Suppose $q$ is a polynomial having all its zeros in $\D$ and $qf\in H^p(\D)$ for some $f\in H(\D)$.  Because all the zeros of $q$ belong to $\D$, $c: = \min \{|q(\zeta)|: \zeta \in \partial \D\}$ is positive; thus,  $|qf| \ge c|f|$ on the unit circle, from which it follows that $f \in H^p(\D)$. Suppose  $f\in A^p_\omega(\D)$  has a zero at $a\in \D$, so that $f(z) = (z-a)g(z)$ for some $g\in H(\D)$. Then $g$, being continuous on $\{z: |z-a|\le (1-|a|)/2\}$, is bounded on this closed disk, and, off this disk, $|g(z)| \le 2|f(z)|/(1-|a|)$.  It follows that $g\in A_\omega^p(\D)$, and, by induction, (iv) holds for $A^p_\omega(\D)$.

\item[(v)] The integral formulas for the norms of the Hardy spaces $H^p(\D)$ and the weighted Bergman spaces $A^p_\omega(\D)$ make it clear that every bounded function in $H(\D)$ is a multiplier of these spaces. Thus, (v) holds for these spaces.  

\item[(vi)] Let $l$ be a positive integer.   If $(q_k)$ is a sequence of polynomials each of degree at most $l$ and  $q_k(z) = \sum_{j=0}^l a_{j,k}z^j$ is such that for each $j\in\{0,1, \ldots, l\}$  the coefficient sequence $(a_{j,k})_{k=1}^\infty$ converges to $0$, then $(q_k)$ converges to $0$ uniformly on $\D^-$, and, because the norm of $q_k$ in either $H^p(\D)$ or $A^p_\omega(\D)$ is at most $\|q_k\|_\infty$, (vi) holds for these spaces. 

\item[(vii)]  The spaces $H^p(\D)$ and $A^p_\omega(\D)$ are examples of spaces having boundary-extension type $0$. These spaces contain $H^\infty(\D)$ and hence, e.g., include infinite Blaschke products and singular inner functions that do not extend continuously to $\partial \D$. They also include unbounded functions such as $f(z) = \log(1-z)$. Thus, these spaces do not have boundary-extension type $n$ for any $n\in\Z_{>0}$.   To see that these spaces have boundary-extension type $0$, note that if $|\zeta| = 1$, then $f(z) = z-\zeta$ is an outer function, and, for any bounded outer function $f$, $M_f: H^p(\D)\to H^p(\D)$ has a dense range  by Beurling's Theorem, which holds on $H^p(\D)$ by, e.g., \cite[Exercise 18(a), p.\ 94]{Garn}. Hence, for each $\zeta\in \partial\D$, the closure of the range of $M_{z-\zeta}$ on $H^p(\D)$ contains, in particular, $f(z) =1$.   It follows from Inequality~(\ref{HBB}) that for each $\zeta\in \partial \D$ the closure of the range of $M_{z-\zeta}: A^p_\omega(\D) \to A^p_\omega(\D)$ also contains $f(z) = 1$. 
\end{itemize}
\subsection{The Hardy-Sobolev spaces $H^2_\beta(\D)$} 

For $\beta\in \R$, the Hardy-Sobolev  spaces $H^2_\beta(\D)$ are defined in \cite{CHZ} by
$$
H^2_\beta(\D) = \left\{ f\in H(\D):   \|f\|^2_{H^2_\beta(\D)} : =  |f(0)|^2 +  \sum_{k=1}^\infty  k^{2\beta} \left|\hat{f}(k)\right|^2 < \infty \right\},
$$
where $(\hat{f}(k))$ is the sequence of Maclaurin coefficients of $f$.  We work with these spaces using different notation.  For $\alpha \in \R$, let
$$
D_\alpha = \{f\in H(\D): \|f\|^2_\alpha:=  \sum_{k=0}^\infty (k+1)^\alpha \left|\hat{f}(k)\right|^2 <\infty\},
$$
so that $D_\alpha$ is a Hilbert space of analytic functions on $\D$ with inner product 
$$
\langle f, g\rangle_\alpha = \sum_{k=0}^\infty (k+1)^\alpha \hat{f}(k)\overline{\hat{g}(k)}.
$$
Note that if $\beta \le \gamma$, $D_\gamma\subseteq D_\beta$. 
These spaces $D_\alpha$ are the Hardy-Sobolev spaces $H^2_\beta(\D)$ of \cite{CHZ} with $H^2_\beta(\D) = D_{2\beta}$ for all real $\beta$ and with the spaces having equivalent norms.   

For $\alpha >1$, it is easy to check that $\sum_{k=0}^\infty \left|\hat{f}(k)\right| <  \infty$ for $f\in D_\alpha$, so that functions in $D_\alpha$ extend to be continuous on $\D^-$.  Hence,  $D_\alpha$ is contained in the disk algebra for $\alpha >1$; moreover, $D_\alpha$ itself is an algebra for $\alpha > 1$ \cite[Theorem 3]{Kopp}.   Hence, for $\alpha > 1$, the multipliers of $D_\alpha$ are precisely the functions in $D_\alpha$.

 For $\alpha = 0$, observe that $D_\alpha = H^2(\D)$ and $\|\cdot \|_\alpha = \|\cdot \|_{H^2(\D)}$.  For $\alpha < 0$,   $D_\alpha = A^2_{-1-\alpha}(\D)$, with the norm $\|\cdot\|_\alpha$ being equivalent to $\|\cdot\|_{A^2_{-1-\alpha}(\D)}$;  see, e.g.,   \cite[Lemma 2]{Taylor} for a proof.   Finally, $D_1$ is the classical Dirichlet space, and it is natural to view $D_\alpha$ for $0< \alpha \le 1$ as weighted Dirichlet spaces (and this connection perhaps accounts for the use of ``$D$'' in $D_\alpha$).

 We verify that the spaces $D_\alpha$, $\alpha\in \R$, satisfy properties (i)--(vii) of Definition~\ref{defBS}. These properties are known to hold for $D_\alpha$, with the possible exception that $D_\alpha$, for $\alpha > 1$, satisfies the boundary-extension requirement (n.2) of Definition~\ref{BETD} for some $n\in \Z_{>0}$ (see Theorem~\ref{BERC}).  For (iv) and (v), our method of validation may be new.   Most verifications are routine; we include all for completeness. 
\begin{itemize}[wide, itemsep=4pt]
\item[(i)]  Let $\alpha\in \R$. If $f\in D_\alpha$ and $f_n(z) := \sum_{k=0}^{n} \hat{f}(k) z^k$ is its $n$-th Maclaurin polynomial, then $\left\|f_n - f\right\|^2_\alpha = \sum_{k=n+1}^\infty (k+1)^\alpha \left|\hat{f}(k)\right|^2$ has limit $0$ as $n\to \infty$; thus, (i) holds.

\item[(ii)] Observe that if $w\in \D$,  then
$$K_{w}(z): = \sum_{k=0}^\infty \frac{\bar{w}^{k} z^k}{(k+1)^\alpha}\in D_\alpha,$$
and for $f\in D_\alpha$, $\left\langle f, K_{w}\right\rangle_\alpha = f(w)$, from which it follows that (ii) holds.

\item[(iii)] Let $j\in \Z_{>0}$.  It is easy to check that the norm of the operator  $M_{z^j}: D_\alpha \to D_\alpha$ is $1$ for $\alpha \le 0$ and $(j+1)^{\alpha/2}$ for $\alpha > 0$.  Thus, $M_z$ is bounded and its spectral radius  is $1$ ($\lim_{j\to \infty} \left\|(M_z)^j\right\|^{1/j} = \lim_{j\to \infty} \left\|M_{z^j}\right\|^{1/j} = 1$).

\item[(iv)]   We have noted that $D_0 = H^2(\D)$ and for $\alpha < 0$, $D_\alpha$ is a weighted Bergman space (with equivalent norm); thus, (iv) holds for the spaces $D_\alpha$ with $\alpha \le 0$.  Applying Proposition~\ref{pdpDa} below and an induction argument establishes that (iv) holds for all $\alpha$. In the proof of Proposition~\ref{pdpDa}, we use the following lemma whose proof is straightforward. 

\begin{lemma}\label{EL}  For $\alpha \in \R$,  $f\in D_{\alpha}$ if and only if $f'\in D_{\alpha-2}$. 
\end{lemma}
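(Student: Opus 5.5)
The statement to prove is Lemma~\ref{EL}: for $\alpha\in\R$, $f\in D_\alpha$ if and only if $f'\in D_{\alpha-2}$. The plan is a direct coefficient computation that compares the two defining series.

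Write $f(z)=\sum_{k\ge0}\hat f(k)z^k$. Termwise differentiation on $\D$ is legitimate for any $f\in H(\D)$ and gives
\[
f'(z)=\sum_{k\ge0}(k+1)\hat f(k+1)z^k,
\]
so that $\widehat{f'}(k)=(k+1)\hat f(k+1)$. Substituting this into the definition of the $D_{\alpha-2}$ norm yields
\[
\|f'\|^2_{\alpha-2}=\sum_{k=0}^\infty (k+1)^{\alpha-2}(k+1)^2\left|\hat f(k+1)\right|^2=\sum_{m=1}^\infty m^{\alpha}\left|\hat f(m)\right|^2,
\]
after the reindexing $m=k+1$.

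We then compare this with
\[
\|f\|^2_\alpha-|\hat f(0)|^2=\sum_{m=1}^\infty (m+1)^\alpha\left|\hat f(m)\right|^2.
\]
For $m\ge1$ we have $1\le (m+1)/m\le 2$. Consequently $(m+1)^\alpha$ and $m^\alpha$ are comparable with constants $\min(1,2^\alpha)$ and $\max(1,2^\alpha)$, and this holds whatever the sign of $\alpha$. One series therefore converges if and only if the other does. Moreover,
\[
\|f\|_\alpha^2\asymp |f(0)|^2+\|f'\|^2_{\alpha-2}.
\]
This gives both directions of the equivalence.

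The only points requiring care are the validity of termwise differentiation, which holds because power series may be differentiated termwise inside their disk of convergence, and the treatment of negative $\alpha$. The main obstacle, such as it is, lies in the constant comparison: the index $m=0$ must be excluded so that $m^\alpha$ is never evaluated at $0$ when $\alpha<0$. That term is harmless because it corresponds to $f(0)$, which does not affect $f'$.
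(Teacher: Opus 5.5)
Your proposal is correct: the identity $\widehat{f'}(k)=(k+1)\hat f(k+1)$ turns $\|f'\|_{\alpha-2}^2$ into $\sum_{m\ge 1} m^{\alpha}|\hat f(m)|^2$, and this sum is comparable to $\sum_{m\ge 1}(m+1)^{\alpha}|\hat f(m)|^2$ for every real $\alpha$, so the two series converge together. The paper gives no proof, calling the lemma straightforward, and your coefficient comparison is exactly the routine verification it has in mind.
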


\begin{proposition}\label{pdpDa} Suppose that $D_\alpha$ has property (iv); then $D_{\alpha +2}$ has property (iv).
\end{proposition}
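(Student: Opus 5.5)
Since property (iv) concerns polynomials with all zeros in $\D$, an induction on the number of zeros reduces the claim to one linear factor. I will show: if $a\in\D$, $f\in H(\D)$ and $(z-a)f\in D_{\alpha+2}$, then $f\in D_{\alpha+2}$. The tool is Lemma~\ref{EL}, which moves the question from $D_{\alpha+2}$ down to $D_\alpha$, where property (iv) is assumed.

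Set $g=(z-a)f\in D_{\alpha+2}$. By Lemma~\ref{EL}, $g'\in D_\alpha$. Differentiating gives $g'=f+(z-a)f'$. It suffices to show $f'\in D_\alpha$, because Lemma~\ref{EL} then gives $f\in D_{\alpha+2}$. To isolate $f'$, first note that $g\in D_{\alpha+2}\subseteq D_\alpha$. Since $D_\alpha$ has property (iv) and $g=(z-a)f$, we get $f\in D_\alpha$. Now $(z-a)f'=g'-f\in D_\alpha$. Applying property (iv) in $D_\alpha$ once more, with the polynomial $z-a$ whose zero lies in $\D$, gives $f'\in D_\alpha$. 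Hence $f\in D_{\alpha+2}$.

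The general case follows by factoring $q=c\prod(z-a_i)$ and peeling off one factor at a time. At each stage the intermediate function lies in $H(\D)$ and the product lies in $D_{\alpha+2}$, so the one-factor step applies.

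The argument is short. The steps needing care are the inclusion $D_{\alpha+2}\subseteq D_\alpha$ (noted in the text, since $\beta\le\gamma$ implies $D_\gamma\subseteq D_\beta$) and making sure property (iv) is applied only in $D_\alpha$, to functions in $H(\D)$. The latter holds because $f$ and $f'$ are analytic on $\D$. I expect no genuine obstacle beyond invoking Lemma~\ref{EL} correctly in both directions.
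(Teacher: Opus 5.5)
Your proof is correct. Its skeleton matches the paper's: pass to derivatives with Lemma~\ref{EL}, expand by the product rule, show that the term ``(polynomial)$\,\times f'$'' lies in $D_\alpha$, apply (iv) in $D_\alpha$, and return via Lemma~\ref{EL}.

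The difference lies in how the lower-order product-rule term is handled. The paper first multiplies by a second copy of $q$, using boundedness of $M_z$ on $D_{\alpha+2}$. This gives $(q^2g)' = q^2g' + 2q'\,(qg)$, where the extra term is a polynomial multiple of the given function $qg\in D_{\alpha+2}\subseteq D_\alpha$. A single application of (iv) with $q^2$ then finishes.

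You instead apply (iv) in $D_\alpha$ to the original product first, using $D_{\alpha+2}\subseteq D_\alpha$, to get $f\in D_\alpha$. That makes the lower-order term harmless directly, at the cost of a second use of (iv) but with no squaring.

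Your reduction to linear factors is not actually needed. For general $q$ the same steps work, since $q'f\in D_\alpha$ because polynomials are multipliers of $D_\alpha$. The reduction is harmless, though: with $q'=1$ it lets you avoid multiplier facts entirely, at the price of an induction on $\deg q$.
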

\begin{proof}    Let $q$ be a polynomial all of whose zeros belong to $\D$, and let $qg$ belong to $D_{\alpha + 2}$ for some $g\in H(\D)$.  Because $M_z$ is bounded on $D_{\alpha + 2}$, we see $q^2g\in D_{\alpha+2}$.  Thus, by Lemma~\ref{EL}, $(q^2g)'$ belongs to $D_\alpha$; that is, $q^2g' + \left(2q'\right)qg$ belongs to $D_\alpha$. Note that $\left(2q'\right)qg$ belongs to $D_\alpha$ (in fact it belongs to $D_{\alpha + 2}$ being the polynomial $2q'$ times $qg\in D_{\alpha + 2}$).  Because $q^2g'+ \left(2q'\right)qg$ also belongs to $D_\alpha$, it follows that $q^2g'\in D_\alpha$, and, because we are assuming that $D_\alpha$ has property (iv), and $q^2$ has all its zeros in $\D$, we conclude $g'\in D_\alpha$, so that $g\in D_{\alpha + 2}$ (by Lemma~\ref{EL}). It follows that $D_{\alpha+2}$ has property (iv), as desired. \end{proof} 

\item[(v)]  That property (v) holds for all the spaces $D_\alpha$ follows from Proposition 6 of \cite{CHZ}, which depends on Proposition 4 of \cite{CHZ} whose proof ``is not easy.''  We take a different approach to the proof, obtaining  that (v) holds via an induction argument.

\begin{lemma}\label{DLT2} For $\alpha \le 2$, $D_\alpha$ satisfies property (v) of Definition~\ref{defBS}.
\end{lemma}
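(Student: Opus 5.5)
The plan is to split into the cases $\alpha\le 0$ and $0<\alpha\le 2$. The key fact in both is that for $\beta\le 0$ every bounded analytic function is a multiplier of $D_\beta$. The second case then reduces to the first by differentiating, using Lemma~\ref{EL}.

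\emph{Case $\alpha\le 0$.} Here $D_\alpha$ is $H^2(\D)$ when $\alpha=0$. When $\alpha<0$ it is the weighted Bergman space $A^2_{-1-\alpha}(\D)$ with an equivalent norm. As verified for property (v) in Subsection~\ref{HWBE}, every function in $H^\infty(\D)$ is a multiplier of these spaces. Now let $g$ be a multiplier of $D_\alpha$ that is bounded away from $0$ on $\D$. Then $1/g$ is analytic and bounded on $\D$, hence a multiplier. This argument uses only the integral form of the norm, so equivalence of norms causes no trouble.

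\emph{Case $0<\alpha\le 2$.} Put $\beta=\alpha-2$, so $\beta\le 0$ and the previous case shows that $H^\infty(\D)$ consists of multipliers of $D_\beta$. Let $g$ be a multiplier of $D_\alpha$ with $|g|\ge c>0$ on $\D$. By Observation~(4), $g$ is bounded, so both $g$ and $1/g$ lie in $H^\infty(\D)$.

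Fix $f\in D_\alpha$. By Lemma~\ref{EL} it suffices to show $(f/g)'\in D_\beta$. We have
\[
\Bigl(\frac{f}{g}\Bigr)'=\frac{1}{g}\,f'-\frac{1}{g^2}\,fg'.
\]
The first term lies in $D_\beta$, since $f'\in D_\beta$ by Lemma~\ref{EL} and $1/g$ is bounded. For the second term, note that $gf\in D_\alpha$ because $g$ is a multiplier. Hence $(gf)'=g'f+gf'\in D_\beta$ by Lemma~\ref{EL}. Also $gf'\in D_\beta$ because $g$ is bounded. Subtracting gives $fg'\in D_\beta$, and multiplying by the bounded function $1/g^2$ keeps it in $D_\beta$. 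Therefore $(f/g)'\in D_\beta$, so $f/g\in D_\alpha$. Since $f$ was arbitrary, $1/g$ is a multiplier of $D_\alpha$.

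The main obstacle is the term $fg'$: nothing in the hypotheses directly controls $g'$. This is resolved by the product-rule trick, which extracts $fg'$ from $(gf)'$ using only that $g$ is a multiplier. The restriction $\alpha\le 2$ is exactly what makes $\beta=\alpha-2\le 0$, so that boundedness alone suffices for multipliers of $D_\beta$. For larger $\alpha$, presumably an induction in steps of $2$ is needed, in the spirit of Proposition~\ref{pdpDa}.
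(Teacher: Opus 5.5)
Your proof is correct and follows the paper's argument essentially step for step. For $\alpha\le 0$ you identify $D_\alpha$ with $H^2(\D)$ or a weighted Bergman space. For $0<\alpha\le 2$ you use the same product-rule extraction of $fg'\in D_{\alpha-2}$ from $(gf)'$, and then apply Lemma~\ref{EL} to $(f/g)'$.
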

\begin{proof}
  We have already seen that $D_\alpha$ satisfies (v) for $\alpha \le 0$ (because $D_0 = H^2(\D)$ and $D_\alpha$ for $\alpha < 0$ is a weighted Bergman space).  Let $0 < \alpha \le 2$.  Let $h$ be a multiplier of $D_\alpha$ that is bounded away from zero on $\D$.  Let $f\in D_\alpha$ be arbitrary.  Because $h$ is a multiplier of $D_\alpha$, we have $hf\in D_\alpha$ and hence, by Lemma~\ref{EL}, $\left(hf\right)'\in D_{\alpha - 2}$. Thus, $hf' + h'f\in D_{\alpha -2}$, and, because $h$ is bounded on $\D$ (being a multiplier of $D_\alpha$) and $f'\in D_{\alpha-2}$ (Lemma~\ref{EL}), we have $hf'\in D_{\alpha-2}$ because $D_{\alpha - 2}$ is either $H^2(\D)$ or a weighted Bergman space and $h$, being bounded,  is a multiplier of the space.  Because both $hf'$ and $hf' + h'f$ belong to $D_{\alpha-2}$, we conclude $h'f\in D_{\alpha-2}$.  
  
Now we show $\frac{1}{h}f$ belongs to $D_\alpha$, which completes the proof of the lemma because $f\in D_\alpha$ is arbitrary.  We have 
  \begin{equation}\label{rmm}
  \left(\frac{1}{h} f\right)'  = \frac{1}{h} f' + \left(-\frac{1}{h^2}\right)h'f.
  \end{equation}
    Because $h$ is bounded away from $0$ on $\D$,  $\frac{1}{h}$ is bounded on $\D$, and it follows that $\frac{1}{h}f'\in D_{\alpha -2}$ because $f'\in D_{\alpha-2}$ and bounded analytic functions are multipliers of this space (either $H^2(\D)$ or a weighted Bergman space).  Because $h'f\in D_{\alpha - 2}$ (by the discussion of the preceding paragraph) and $-\frac{1}{h^2}$ is bounded on $\D$, we conclude  $\left(-\frac{1}{h^2}\right)h'f$ also belongs to $D_{\alpha -2}$.  Thus, we see both summands on the right-hand side of Equation~(\ref{rmm}) belong to $D_{\alpha-2}$, so that  $\left(\frac{1}{h} f\right)'$ does as well. By Lemma~\ref{EL}, $\frac{1}{h}f\in D_\alpha$.      \end{proof} 
    
\begin{lemma}\label{H1L} If $h$ and $g$ both belong to $D_\alpha$ for some $\alpha \ge 0$, then $hg \in D_{-1} = A^2_0(\D)$.
\end{lemma}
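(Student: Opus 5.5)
Since $D_\alpha\subseteq D_0 = H^2(\D)$ for $\alpha\ge 0$ (the weights $(k+1)^\alpha$ are at least $1$), it suffices to prove the statement for $\alpha = 0$. So the plan is to show that the product of two $H^2(\D)$ functions lies in $D_{-1}=A^2_0(\D)$, with a norm estimate.

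First I would record that the product $hg$ of two $H^2(\D)$ functions lies in $H^1(\D)$, with $\|hg\|_{H^1(\D)}\le \|h\|_{H^2(\D)}\|g\|_{H^2(\D)}$. This follows from the Cauchy--Schwarz inequality applied on each circle $|z|=r$ and then taking the supremum over $r$.

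The main step is the embedding $H^1(\D)\subseteq A^2_0(\D)$. Since $D_{-1}$ has norm $\sum_k |\hat f(k)|^2/(k+1)$, I would use Hardy's inequality: for $f\in H^1(\D)$,
\[
\sum_{k=0}^\infty \frac{|\hat f(k)|}{k+1}\le \pi\|f\|_{H^1(\D)}.
\]
Combined with the coefficient bound $|\hat f(k)|\le \|f\|_{H^1(\D)}$, this gives
\[
\sum_{k=0}^\infty \frac{|\hat f(k)|^2}{k+1}\le \|f\|_{H^1(\D)}\sum_{k=0}^\infty\frac{|\hat f(k)|}{k+1}\le \pi\|f\|^2_{H^1(\D)}<\infty.
\]
Hence $hg\in D_{-1}$.

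If I preferred not to cite Hardy's inequality, an alternative route is a pointwise argument. Factor $f=BF$ with $B$ a Blaschke product and $F$ zero-free, so that $F=G^2$ with $G\in H^2(\D)$ and $\|G\|^2_{H^2(\D)}=\|f\|_{H^1(\D)}$. Then $|f|\le |G|^2$, so it suffices to show $\int_\D |G|^4\,dA<\infty$ for $G\in H^2(\D)$, i.e.\ the standard embedding $H^2(\D)\subseteq A^4_0(\D)$. Either route rests on a classical inequality (Hardy's inequality, or the Hardy--Littlewood embedding $H^p\subseteq A^{2p}$), and that is the main obstacle; I would simply cite it, e.g.\ from \cite{Dur}. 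The remaining steps are routine.
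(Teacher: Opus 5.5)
Your proposal is correct and follows essentially the same route as the paper. The paper also reduces to $D_0=H^2(\D)$, notes that $hg\in H^1(\D)$, and gets $H^1(\D)\subseteq D_{-1}$ from Hardy's inequality together with the boundedness of the Maclaurin coefficients. You simply write out the resulting estimate $\sum_k |\hat f(k)|^2/(k+1)\le \pi\|f\|_{H^1(\D)}^2$, which the paper leaves implicit.
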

\begin{proof} Let $\alpha \ge 0$ and let $h,g \in D_{\alpha}$.  Because $\alpha \ge 0$, we have $D_\alpha \subseteq D_0 = H^2(\D)$. Thus, $hg\in H^1(\D)\subseteq D_{-1} =  A^2_0(\D)$.  That $H^1\D)\subseteq D_{-1}$ follows from Hardy's Inequality \cite[p.\ 48]{Dur} and the boundedness of the Maclaurin coefficients of functions in $H^1(\D)$.  
\end{proof}

\begin{lemma}\label{mr3}  For $2 < \alpha \le 3$, $D_\alpha$ satisfies property (v) of Definition~\ref{defBS}.\end{lemma}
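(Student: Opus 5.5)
The plan is to reduce the case $2<\alpha\le 3$ to the case $0<\alpha-2\le 1$, which is already covered by Lemma~\ref{DLT2}, via differentiation (Lemma~\ref{EL}). Lemma~\ref{H1L} handles the one product that falls below the range where boundedness alone suffices.

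Let $h$ be a multiplier of $D_\alpha$ that is bounded away from $0$ on $\D$, and let $f\in D_\alpha$. Since $1\in D_\alpha$, we have $h=h\cdot 1\in D_\alpha$. Also $h$ is bounded, being a multiplier.

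The first and key step is an auxiliary claim.

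\emph{Claim.} Every $u\in D_\alpha$ that is bounded on $\D$ is a multiplier of $D_{\alpha-2}$.

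This applies to $h$. It also applies to $f$: since $\alpha>1$, functions in $D_\alpha$ extend continuously to $\D^-$, so $f$ is bounded.

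To prove the claim, let $g\in D_{\alpha-2}$ and write $(ug)'=u'g+ug'$. By Lemma~\ref{EL}, $g'\in D_{\alpha-4}$. Since $\alpha-4\le -1$, the space $D_{\alpha-4}$ is a weighted Bergman space (up to equivalent norm), so the bounded function $u$ multiplies it, and $ug'\in D_{\alpha-4}$.

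For the other term, Lemma~\ref{EL} gives $u'\in D_{\alpha-2}$, and $g\in D_{\alpha-2}$ with $\alpha-2>0$. Lemma~\ref{H1L} then yields $u'g\in D_{-1}$. Because $\alpha-4\le -1$, we have $D_{-1}\subseteq D_{\alpha-4}$, so $u'g\in D_{\alpha-4}$.

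Hence $(ug)'\in D_{\alpha-4}$, and Lemma~\ref{EL} gives $ug\in D_{\alpha-2}$. This proves the claim.

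Next, $h$ is a multiplier of $D_{\alpha-2}$ that is bounded away from $0$. Since $0<\alpha-2\le 1$, Lemma~\ref{DLT2} shows that $1/h$ is a multiplier of $D_{\alpha-2}$, and therefore so is $1/h^2$.

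Now compute
$$\Bigl(\tfrac{1}{h}f\Bigr)'=\tfrac{1}{h}f'-\tfrac{1}{h^2}h'f .$$
For the first term, $f'\in D_{\alpha-2}$ by Lemma~\ref{EL}, so $\tfrac1h f'\in D_{\alpha-2}$. For the second term, $h'\in D_{\alpha-2}$ by Lemma~\ref{EL}, and $f$ is a multiplier of $D_{\alpha-2}$ by the claim, so $h'f\in D_{\alpha-2}$. Multiplying by $1/h^2$ keeps it in $D_{\alpha-2}$.

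Thus $(f/h)'\in D_{\alpha-2}$, and Lemma~\ref{EL} gives $f/h\in D_\alpha$. Since $f\in D_\alpha$ was arbitrary, $1/h$ is a multiplier of $D_\alpha$, which is property~(v).

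The main obstacle is the claim, specifically the cross term $u'g$. Knowing only that $u'\in D_{\alpha-2}$ does not make $u'$ a multiplier of anything useful. The trick is to drop two further levels to $D_{\alpha-4}\supseteq D_{-1}$, where the product estimate $H^2\cdot H^2\subseteq H^1\subseteq D_{-1}$ from Lemma~\ref{H1L} suffices. This is exactly why the restriction $\alpha\le 3$, which guarantees $\alpha-4\le -1$, appears.
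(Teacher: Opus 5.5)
Your proof is correct, but it is organized differently from the paper's.

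The paper works with the second derivative. It expands $(f/h)''$ into four terms and checks that each lies in the weighted Bergman space $D_{\alpha-4}$. The terms $f''/h$ and $h''f/h^2$ are handled using boundedness of $1/h$ and $f$. The terms containing $h'f'$ and $(h')^2$ are handled by Lemma~\ref{H1L}, since $D_{-1}\subseteq D_{\alpha-4}$. The paper never invokes Lemma~\ref{DLT2}.

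You take only one derivative and bootstrap from Lemma~\ref{DLT2} applied to $D_{\alpha-2}$. This forces you to supply a fact the paper does not need: elements of $D_\alpha$ (all bounded, since $\alpha>1$) are multipliers of $D_{\alpha-2}$. That is your Claim, and its proof is sound.

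The analytic input is the same in both arguments:
\begin{itemize}
\item bounded analytic functions multiply $D_{\alpha-4}$;
\item $D_{\alpha-2}\cdot D_{\alpha-2}\subseteq D_{-1}\subseteq D_{\alpha-4}$ via Lemma~\ref{H1L}.
\end{itemize}
So the difference is structural. Your argument has the same shape as the paper's later inductive step for $\alpha>3$. There the algebra property of $D_{\alpha-2}$ is used, which fails when $0<\alpha-2\le 1$, and your Claim is exactly the substitute. As a by-product you get that $D_\alpha$ multiplies $D_{\alpha-2}$ in this range.

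The paper's route is a one-shot computation that is self-contained. It is less uniform with the induction that follows it.
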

\begin{proof}     Let $2 < \alpha \le 3$, and let $h$ be a multiplier of $D_\alpha$ that is bounded away from zero on $\D$.  Because $1\in D_\alpha$, we see that $h\in D_\alpha$. Let $f\in D_\alpha$ be arbitrary; observe that $f$ is bounded on $\D$ (in fact, in the disk algebra because $\alpha > 1$).  Note that $f'$ and $h'$ belong to $D_{\alpha -2}$ and that $\alpha - 2>0$; thus, $h'f'$ and $\left(h'\right)^2$ belong to $D_{-1}$ by Lemma~\ref{H1L}.  

   We complete the proof that $1/h$ is a multiplier of $D_\alpha$ by showing the second derivative of $\frac{1}{h}f$  belongs to the weighted Bergman space $D_{\alpha-4}$, which implies $\frac{1}{h}f\in D_\alpha$ by Lemma~\ref{EL}.  We have 
  \begin{equation}\label{2de}
  \left(\frac{1}{h} f\right)''  = \frac{1}{h}f'' + 2\left(-\frac{1}{h^2}\right)h'f' +  \left(-\frac{1}{h^2}\right)h''f  +    2\frac{1}{h^3}  \left(h'\right)^2f.
  \end{equation}
  We claim that each of the four summands on the right-hand side of the preceding equation belongs to $D_{\alpha - 4}$, which completes the proof.  The claim holds for the first summand because $f''\in D_{\alpha -4}$ and $\frac{1}{h}$ is bounded on $\D$ and thus is a multiplier of any weighted Bergman space; it holds for the second because $h'f'\in D_{-1}\subseteq D_{\alpha-4}$ and $-1/h^2$ is bounded on $\D$; it holds for the third because $h''\in D_{\alpha-4}$ and both $f$ and $-1/h^2$ are bounded on $\D$; finally, the claim holds for the fourth summand because $(h')^2\in D_{-1} \subseteq D_{\alpha-4}$ while both $f$ and $1/h^3$ are bounded on $\D$. 
  \end{proof}  
  
 We now complete the proof that the $D_\alpha$ spaces satisfy property (v) of Definition~\ref{defBS}.
  
  \begin{proposition}  Let $\alpha\in \R$. If $h$ is a multiplier of  $D_\alpha$ that is bounded away from $0$ on $\D$, then $1/h$ is also a multiplier of $D_\alpha$.  
\end{proposition}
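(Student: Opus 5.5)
We argue by induction on $\alpha$ in steps of $2$, using Lemmas~\ref{DLT2} and~\ref{mr3} as base cases covering $\alpha\le 3$. Fix $\alpha>3$ and suppose property (v) is known for $D_{\alpha-2}$. Let $h$ be a multiplier of $D_\alpha$ bounded away from $0$ on $\D$. Since $\alpha>1$, $D_\alpha$ is an algebra contained in the disk algebra, and its multipliers are exactly its elements. So $h\in D_\alpha$, and it suffices to show $1/h\in D_\alpha$.

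By Lemma~\ref{EL}, $1/h\in D_\alpha$ if and only if $(1/h)' = -h'/h^2\in D_{\alpha-2}$. Since $\alpha-2>1$, $D_{\alpha-2}$ is also an algebra whose multipliers are its elements. Moreover $h\in D_\alpha\subseteq D_{\alpha-2}$, and $h$ is bounded away from $0$. The induction hypothesis (property (v) for $D_{\alpha-2}$) therefore gives that $1/h$ is a multiplier of $D_{\alpha-2}$, so $1/h\in D_{\alpha-2}$. Hence $1/h^2=(1/h)(1/h)\in D_{\alpha-2}$, using the algebra property of $D_{\alpha-2}$. Also $h'\in D_{\alpha-2}$ by Lemma~\ref{EL}. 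Consequently $h'/h^2\in D_{\alpha-2}$, again by the algebra property. Lemma~\ref{EL} then gives $1/h\in D_\alpha$, so $1/h$ is a multiplier of $D_\alpha$.

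The base cases are $\alpha\le 2$ (Lemma~\ref{DLT2}) and $2<\alpha\le 3$ (Lemma~\ref{mr3}). For $3<\alpha\le 5$, the step reduces to $\alpha-2\in(1,3]$, which is covered by those lemmas, and the induction then proceeds upward. The one point needing care is that the inductive step uses the algebra property of $D_{\alpha-2}$, which requires $\alpha-2>1$, i.e.\ $\alpha>3$. That condition is exactly why Lemma~\ref{mr3} has to handle the range $2<\alpha\le 3$ separately.
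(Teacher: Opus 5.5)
Your proof is correct and follows essentially the same induction as the paper: base cases from Lemmas~\ref{DLT2} and~\ref{mr3}, and for $\alpha>3$ an inductive step that uses Lemma~\ref{EL} together with the algebra property of $D_{\alpha-2}$. The only cosmetic difference is in the final step: you show $(1/h)'\in D_{\alpha-2}$ and then use the algebra property of $D_\alpha$ to conclude that $1/h$ is a multiplier, whereas the paper differentiates $f/h$ directly for an arbitrary $f\in D_\alpha$.
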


\begin{proof}  We have already established the proposition holds  if $\alpha \le 3$ (Lemmas~\ref{DLT2} and \ref{mr3}).  Our proof that the proposition holds for $\alpha > 3$ is inductive.  As we have just noted, the proposition holds for $1 < \alpha \le 3$.

Let $j$ be a positive integer.  Suppose that for all $\alpha$ satisfying $2j-1 < \alpha \le 2j+1$,  if $h$ is a multiplier of $D_\alpha$ that is bounded away from $0$ on $\D$, then $1/h$ is also a multiplier of $D_\alpha$.  
Let $\alpha$ satisfy $2j+1< \alpha \le 2j+3$, and let $h$ be a multiplier of $D_\alpha$ that is bounded away from $0$ on $\D$.  We show $1/h$ is also a multiplier of $D_\alpha$ to complete the proof.

Both $h$ and $h'$ are multipliers of $D_{\alpha -2}$ because both functions belong to $D_{\alpha -2}$, which is an algebra because    $\alpha -2 >1$.  Now observe that by our induction hypothesis, $1/h$ is a multiplier of $D_{\alpha -2}$, because $h$ is a multiplier of $D_{\alpha -2}$ that is bounded away from $0$ on $\D$. 

Let $f\in D_\alpha$ be arbitrary. We show $\frac{1}{h}f\in D_\alpha$ to complete the proof.  We have
$$
\left(\frac{1}{h} f\right)' = \frac{1}{h} f' + \frac{-1}{h^2} h' f.
$$
The first summand on the right-hand side of the preceding equation belongs to $D_{\alpha -2}$ because $f'\in D_{\alpha -2}$ and $1/h$ is a multiplier of $D_{\alpha -2}$, while second  belongs to $D_{\alpha -2}$ because $h'$ as well as $-1/h^2$ are multipliers of $D_{\alpha -2}$ and $f\in D_{\alpha} \subseteq D_{\alpha -2}$ (alternatively, the second summand belongs to $D_{\alpha-2}$ because it is an algebra and $1/h, h'$, and $f$ belong to $D_{\alpha-2}$).  Thus, $\left(\frac{1}{h} f\right)'\in D_{\alpha -2}$, and by Lemma~\ref{EL}, $\frac{1}{h}f\in D_\alpha$, as desired. 
\end{proof}

\item[(vi)]  Property (vi) of Definition~\ref{defBS} clearly holds for the spaces $D_\alpha$ because for a polynomial $q(z) = \sum_{k=0}^l a_kz^k$ of degree at most $l$, we have $\|q\|^2_\alpha =\sum_{k=0}^l (k+1)^\alpha |a_k|^2$.

\item[(vii)]    The next proposition establishes that $D_\alpha$ has boundary-extension type $0$ if and only if $\alpha \le 1$.  Proposition~\ref{CEPDA} and Theorem~\ref{BERC} below combine to show that if $\alpha$ satisfies $2j-1 < \alpha \le 2j+1$ for some positive integer $j$, then $D_\alpha$ has boundary-extension type $j$. 

   \begin{proposition}\label{DRP} Let $\zeta\in \partial \D$. The operator $M_{z-\zeta}: D_\alpha \to D_\alpha$ has dense range if and only if $\alpha \le 1$. \end{proposition}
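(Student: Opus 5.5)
By rotation invariance of the $D_\alpha$ norm (it depends only on $|\hat f(k)|$, so $f\mapsto f(\bar\zeta\,\cdot)$ is an isometry intertwining $M_{z-\zeta}$ with $\zeta M_{z-1}$), it suffices to treat $\zeta=1$. The proof then splits according to whether $\alpha\le 1$ or $\alpha>1$.

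For the ``only if'' direction, suppose $\alpha>1$. Then $\sum_k|\hat f(k)|\le \bigl(\sum_k (k+1)^{-\alpha}\bigr)^{1/2}\|f\|_\alpha$ by Cauchy--Schwarz. Hence $E_1 f=\sum_k \hat f(k)$ is a continuous linear functional on $D_\alpha$. It annihilates $\Ran(M_{z-1})$ but not the constant $1$. So the range of $M_{z-1}$ is not dense.

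For the ``if'' direction, suppose $\alpha\le 1$. Since the polynomials are dense and $M_{z-1}$ is bounded, it suffices (by Observation~\ref{MOR}-type reasoning: $(z-1)p\in(\Ran M_{z-1})^-$ trivially, so only $1$ is needed) to show that $1\in(\Ran M_{z-1})^-$. Indeed, if $1$ lies in the closed $M_z$-invariant subspace $(\Ran M_{z-1})^-$, then so does every polynomial, and the closure is all of $D_\alpha$. Because $D_\beta\subseteq D_1$ with $\|\cdot\|_\beta\le\|\cdot\|_1$ for $\beta\le1$, it is enough to produce polynomials $p_n$ with $\|1-(1-z)p_n\|_1\to 0$.

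The classical choice is Cesàro-type averaging. Let
$$g_n(z)=1-(1-z)\sum_{k=0}^{n-1}\Bigl(1-\tfrac{k}{n}\Bigr)z^k .$$
A telescoping computation should give $g_n(z)=\frac1n\sum_{k=1}^{n} z^k$. I will verify this by expanding the product: the coefficient of $z^k$ for $1\le k\le n$ is $(1-\frac{k}{n})-(1-\frac{k-1}{n})$ with a sign flip, i.e.\ $\frac1n$, and the constant term cancels. Then
$$\|g_n\|_1^2=\frac1{n^2}\sum_{k=1}^n (k+1)\sim \frac12 ,$$
which does not tend to $0$. So this choice fails at $\alpha=1$, though it works for $\alpha<1$, where $\|g_n\|_\alpha^2\approx n^{\alpha-1}\to 0$.

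This is the main obstacle. For the endpoint $\alpha=1$ I will instead use logarithmic weights, the standard Dirichlet-space trick. Take $g_n=\sum_{k=1}^{N} c_k z^k$ with $\sum_k c_k=1$ (this is exactly the condition that $1-g_n$ vanishes at $1$, hence lies in $(1-z)\cdot$polynomials) and minimize $\sum (k+1)|c_k|^2$. The optimum is $c_k\propto 1/(k+1)$, giving $\|g_n\|_1^2=1/\sum_{k=1}^N\frac1{k+1}\to 0$ because the harmonic series diverges. Thus $1-(1-z)p_N=g_N\to 0$ in $D_1$, hence in every $D_\alpha$ with $\alpha\le 1$, which completes the ``if'' direction. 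The case $\alpha<1$ also follows from this same construction, so the Cesàro computation is unnecessary.
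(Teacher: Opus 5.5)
Your proof is correct. Its ``only if'' half is the paper's argument in different form: the paper exhibits $K_\zeta(z)=\sum_k \bar\zeta^k z^k/(k+1)^\alpha\in D_\alpha$ as a nonzero vector orthogonal to $\Ran(M_{z-\zeta})$. That vector is exactly the Riesz representer of the boundary evaluation functional whose continuity you obtain from Cauchy--Schwarz.

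The ``if'' half takes a genuinely different route. The paper argues by duality. If $f\perp\Ran(M_{z-\zeta})$, testing against $(z-\zeta)z^k$ gives the recursion $(k+2)^\alpha\hat f(k+1)=\bar\zeta(k+1)^\alpha\hat f(k)$. Hence $|\hat f(k)|=|\hat f(0)|(k+1)^{-\alpha}$ and $\|f\|_\alpha^2=|\hat f(0)|^2\sum_k(k+1)^{-\alpha}$, which is infinite for $\alpha\le 1$ unless $f=0$.

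You instead build explicit approximants by solving the extremal problem of minimizing $\sum_{k=1}^N(k+1)|c_k|^2$ subject to $\sum c_k=1$. You then reduce every $\alpha\le 1$ to the single case $\alpha=1$ by the norm comparison.

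The two arguments are dual to each other. Your optimal coefficients $c_k\propto 1/(k+1)$ are the truncated coefficients of the paper's would-be annihilator $K_1$ in $D_1$. Your value $\|g_N\|_1^2=\bigl(\sum_{k=1}^N \frac{1}{k+1}\bigr)^{-1}$ tends to $0$ for the same reason that $K_1\notin D_1$, namely divergence of the harmonic series.

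Each route has its advantages:
\begin{itemize}
\item The paper's argument is shorter. It treats every $\zeta$ and every $\alpha\le 1$ at once and displays the threshold directly as the divergence of $\sum(k+1)^{-\alpha}$.
\item Your argument yields explicit polynomial approximants with the rate $(\log N)^{-1/2}$. It also gives norm convergence at the endpoint $\alpha=1$, where naive approximants are merely bounded. Your Ces\`aro computation shows this, and so does the paper's treatment of $D^2(\D)$ with $(z-\zeta)/(tz-\zeta)$, which needs a weak-convergence argument there.
\end{itemize}

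One slip: for $\beta\le 1$ the inclusion is $D_1\subseteq D_\beta$, not $D_\beta\subseteq D_1$. Since you only use the inequality $\|\cdot\|_\beta\le\|\cdot\|_1$ on polynomials, nothing breaks.
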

\begin{proof} Let $\alpha\le 1$.  Let $f\in D_\alpha$ be in the orthogonal complement of the range of $M_{z-\zeta}$.  Then for all nonnegative integers $k$, we have
$$
\left\langle f, (z-\zeta)z^k\right\rangle_\alpha = 0,
$$
which implies $(k+2)^\alpha\hat{f}(k+1) - \bar\zeta (k+1)^\alpha  \hat{f}(k) = 0$ for every $k\in \Z_{\ge 0}$.  It follows that $\left|\hat{f}(k+1)\right| = \frac{(k+1)^\alpha}{(k+2)^\alpha}\left|\hat{f}(k)\right|$ for $k\in \Z_{\ge 0}$, and, by induction,
\begin{equation}\label{IF}
\left|\hat{f}(k)\right| = \frac{1}{(k+1)^\alpha}\left|\hat{f}(0)\right|\
 \text{for} \ k\in \Z_{\ge 0}.
 \end{equation}
    Because $f\in D_\alpha$ and (\ref{IF}) holds, we have 
$$
\infty >\sum_{k=0}^\infty (k+1)^\alpha \left|\hat{f}(k)\right|^2 = \left|\hat{f}(0)\right|^2 \sum_{k=0}^\infty \frac{1}{(k+1)^\alpha}.
$$
Because $\alpha \le  1$, the preceding inequality implies $\hat{f}(0) = 0$; hence, by Equation~(\ref{IF}), $\hat{f}(k) = 0$ for $k\in\Z_{\ge 0}$, so that $f$ is the zero function.  If follows that the range of $M_{z-\zeta}$ is dense in $D_\alpha$.

Let $\alpha >1$.   For $w\in \D^-$,  observe that the function $K_{w}(z) = \sum_{k=0}^\infty \frac{\bar{w}^{k} z^k}{(k+1)^\alpha}\in D_\alpha,$ and that for $f\in D_\alpha$,
$$
f(w) = \left\langle f, K_w\right\rangle.
$$
Now observe that for each $\zeta\in \partial \D$, $K_\zeta\in D_\alpha$ belongs to the orthogonal complement of the range of $M_{z-\zeta}$; hence, the range of $M_{z-\zeta}$ is not dense in $D_\alpha$, which completes the proof. 
\end{proof}

The next proposition establishes that  if $\alpha$ satisfies  $2j-1 < \alpha \le  2j +1$ for some positive integer $j$,  then $D_\alpha$ satisfies the first of the two requirements for $D_\alpha$ to have boundary-extension type $j$.  

\begin{proposition}\label{CEPDA} If $2j -1 < \alpha \le  2j +1$ for some positive integer $j$, then $D_\alpha \subset C^{j-1}\left(\D^-\right)$.
\end{proposition}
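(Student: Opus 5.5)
The plan is to show that for $f\in D_\alpha$ the Maclaurin series of $f^{(i)}$ converges absolutely and uniformly on $\D^-$ for every $0\le i\le j-1$. Once this holds, $f$ and its first $j-1$ derivatives extend continuously to $\D^-$. Write $f(z)=\sum_{k\ge0}\hat f(k)z^k$. Then
$$
f^{(i)}(z)=\sum_{k\ge i}k(k-1)\cdots(k-i+1)\hat f(k)z^{k-i},
$$
and on $\D^-$ each term is bounded in modulus by $(k+1)^i|\hat f(k)|$. By the Weierstrass M-test, it therefore suffices to prove
$$
\sum_{k=0}^\infty (k+1)^{i}\left|\hat f(k)\right|<\infty \quad\text{for } 0\le i\le j-1.
$$
Since $(k+1)^i\le (k+1)^{j-1}$, I only need the case $i=j-1$.

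The key estimate is Cauchy--Schwarz, as in the remark in the paper that functions in $D_\alpha$ with $\alpha>1$ have absolutely summable coefficients:
$$
\sum_{k=0}^\infty (k+1)^{j-1}|\hat f(k)| = \sum_{k=0}^\infty (k+1)^{\alpha/2}|\hat f(k)|\,(k+1)^{j-1-\alpha/2}\le \|f\|_\alpha\Big(\sum_{k=0}^\infty (k+1)^{2j-2-\alpha}\Big)^{1/2}.
$$
The last series converges exactly when $2j-2-\alpha<-1$, that is, when $\alpha>2j-1$, and this is the hypothesis. The upper bound $\alpha\le 2j+1$ plays no role here; it only matters for showing that $D_\alpha$ does not have a higher boundary-extension type. (One could also use Lemma~\ref{EL} inductively to reduce to the fact that $D_\beta$ lies in the disk algebra for $\beta>1$, but the direct estimate is simpler.)

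It remains to check that the continuous extensions form a $C^{j-1}(\D^-)$ function, where derivatives at boundary points are taken relative to $\D^-$, in the sense of Remark (i) after Definition~\ref{BETD}. Let $g_i$ denote the uniform limit on $\D^-$ of the differentiated series. Each $g_i$ is continuous on $\D^-$, agrees with $f^{(i)}$ on $\D$, and satisfies $g_i'=g_{i+1}$ on $\D$. I expect this to be the only real subtlety, and it is mild. To conclude that $g_{i+1}$ is the derivative of $g_i$ at boundary points, I would integrate along the segment joining the two points: for $z,w\in\D^-$,
$$
g_i(z)-g_i(w)=\int_{[w,z]} g_{i+1}(\xi)\,d\xi.
$$
This holds for the partial sums, which are polynomials, and passes to the limit by uniform convergence. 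Continuity of $g_{i+1}$ then makes $g_i$ continuously differentiable on the closed disk, which gives $D_\alpha\subset C^{j-1}(\D^-)$.
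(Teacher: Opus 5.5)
Your argument is correct, and it differs from the paper's mainly in packaging. The paper argues inductively. For $\alpha>1$ the Maclaurin coefficients are absolutely summable; this is your Cauchy--Schwarz estimate with $j=1$. Lemma~\ref{EL} then places $f'$ in $D_{\alpha-2}$, so for $\alpha>3$ the base case applies to $f'$. Iterating reaches $f^{(j-1)}\in D_{\alpha-2j+2}$ with $\alpha-2j+2>1$.

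Your one-shot weighted estimate $\sum (k+1)^{j-1}|\hat f(k)|\le \|f\|_\alpha\bigl(\sum (k+1)^{2j-2-\alpha}\bigr)^{1/2}$ is that induction unrolled. Absolute summability of the coefficients of $f^{(j-1)}$ is equivalent to finiteness of your weighted sum, so the threshold $\alpha>2j-1$ arises identically in both.

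Your version buys three things:
\begin{itemize}
\item It is self-contained, with no appeal to Lemma~\ref{EL}.
\item It gives an explicit constant in $\|f\|_{C^{j-1}(\D^-)}\le C\|f\|_\alpha$, which the paper gets only abstractly from the Closed-Graph Theorem in Remark (ii).
\item It explicitly checks that the continuous extension of $f^{(i+1)}$ is the derivative of the extension of $f^{(i)}$ at boundary points. You do this by integrating along segments in the convex set $\D^-$. The paper's phrase ``has continuous derivative'' leaves this step implicit.
\end{itemize}
The paper's version is shorter because it reuses the already-established case $\alpha>1$. You are also right that the upper bound $\alpha\le 2j+1$ plays no role in this proposition.
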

\begin{proof}   We have already noted that if $\alpha > 1$, then $D_\alpha \subset C^0(\D^-)$ (in fact, the Maclaurin coefficients of functions in $D_\alpha$ are absolutely summable when $\alpha > 1$). Thus, in particular, for $1 < \alpha \le 3$, $D_\alpha \subset C^0(\D^-)$.  If $3 < \alpha \le 5$, then every function  $f\in D_\alpha$ also has continuous derivative by the preceding observation because $f' \in D_{\alpha - 2}$  by Lemma~\ref{EL}.  An inductive argument completes the proof.\end{proof}

\begin{theorem}\label{BERC} If $\alpha$ satisfies $2j-1< \alpha \leq 2j+1$ for some positive integer $j$, then for every $\zeta\in \partial \D$, the closure of the range of $M_{(z-\zeta)^{j+1}}: D_\alpha \to D_\alpha$ contains $f(z) = (z-\zeta)^{j}$.
\end{theorem}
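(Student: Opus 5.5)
The plan is a Hilbert-space duality argument. $D_\alpha$ is a Hilbert space, so the closure of $\Ran(M_{(z-\zeta)^{j+1}})$ is the orthogonal complement of $\mathcal{K}:=\Ran(M_{(z-\zeta)^{j+1}})^\perp$. It therefore suffices to show that every $f\in\mathcal{K}$ is orthogonal to $(z-\zeta)^j$. The steps are: (1) reduce to $\zeta=1$; (2) compute $\mathcal{K}$ explicitly from a recurrence on Maclaurin coefficients, as in the proof of Proposition~\ref{DRP}; (3) use the growth condition defining $D_\alpha$ to cut $\mathcal{K}$ down to functionals of the form $E_1^{(i)}$ with $i\le j-1$; (4) check that these annihilate $(z-1)^j$.

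\emph{Step 1.} The rotation $f(z)\mapsto f(\bar\zeta z)$ multiplies the $k$-th Maclaurin coefficient by $\bar\zeta^{k}$. It is therefore unitary on $D_\alpha$, and it carries $(z-\zeta)^{m}$ to a unimodular constant times $(z-1)^{m}$. So we may take $\zeta=1$.

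\emph{Step 2.} Let $f\in\mathcal{K}$ and set $a_k=(k+1)^\alpha\overline{\hat f(k)}$. Expanding $(z-1)^{j+1}z^k=\sum_{i=0}^{j+1}\binom{j+1}{i}(-1)^{j+1-i}z^{k+i}$, the conditions $\langle (z-1)^{j+1}z^k,f\rangle_\alpha=0$ for all $k\ge 0$ say exactly that $(S-I)^{j+1}a=0$, where $S$ is the forward shift $(Sa)_k=a_{k+1}$. The solution space of this constant-coefficient recurrence is $(j+1)$-dimensional, with characteristic root $1$ of multiplicity $j+1$. Hence $a_k=P(k)$ for a polynomial $P$ with $\deg P\le j$, that is, $\hat f(k)=\overline{P(k)}/(k+1)^\alpha$.

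\emph{Step 3.} Now impose $f\in D_\alpha$:
\[
\sum_k (k+1)^\alpha|\hat f(k)|^2=\sum_k |P(k)|^2(k+1)^{-\alpha}<\infty .
\]
This forces $2\deg P-\alpha<-1$, i.e.\ $\deg P<(\alpha-1)/2\le j$. So $\deg P\le j-1$ (and $P\equiv 0$ is allowed).

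\emph{Step 4.} For $g\in D_\alpha$ we get $\langle g,f\rangle_\alpha=\sum_k \hat g(k)P(k)$. Write $P$ in the falling-factorial basis $k(k-1)\cdots(k-i+1)$, $0\le i\le j-1$. Then this pairing is a linear combination of the quantities $\sum_k k(k-1)\cdots(k-i+1)\hat g(k)=g^{(i)}(1)$ for $i\le j-1$.

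\emph{Main obstacle.} The one point needing care is justifying $\sum_k k(k-1)\cdots(k-i+1)\hat g(k)=g^{(i)}(1)$ for $i\le j-1$. The sum converges absolutely by Cauchy--Schwarz: $\sum k^{2i}(k+1)^{-\alpha}<\infty$ because $2i\le 2j-2<\alpha-1$. It then equals $g^{(i)}(1)$ by Abel's theorem together with Proposition~\ref{CEPDA}, which gives $g\in C^{j-1}(\D^-)$.

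\emph{Conclusion.} Every $f\in\mathcal{K}$ is thus a combination of the functionals $E_1^{(i)}$ with $0\le i\le j-1$. Each of these annihilates $(z-1)^j$, since that polynomial has a zero of order $j$ at $1$. Hence $(z-1)^j\in\mathcal{K}^\perp=\left(\Ran(M_{(z-1)^{j+1}})\right)^-$, as required.
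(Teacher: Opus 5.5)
Your argument is correct, but it takes a different route from the paper's.

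\textbf{The two approaches.} The paper argues constructively. It takes the explicit range elements $\frac{(z-\zeta)^{j+1}}{rz-\zeta}$ and computes the Maclaurin coefficients of $g_{r,j}=\frac{(z-\zeta)^{j+1}}{rz-\zeta}-(z-\zeta)^j$ by induction. It then bounds $\|g_{r,j}\|_\alpha$ uniformly in $r$ using the generating function $\sum_n (n+2j+1)\cdots(n+1)r^{2n}=(2j+1)!/(1-r^2)^{2j+2}$. Finally, weak compactness plus pointwise convergence to $0$ puts $(z-\zeta)^j$ in the weak, hence norm, closure of the range.

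You instead compute the annihilator of the range exactly. This is the paper's proof of Proposition~\ref{DRP} (the recurrence $(S-I)a=0$, i.e.\ $j=0$) pushed to the recurrence $(S-I)^{j+1}a=0$.

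\textbf{What each buys.} Your route is shorter, needs no norm estimates, and proves more. Every $P$ with $\deg P\le j-1$ actually occurs, because $\alpha>2j-1$. So $\Ran(M_{(z-1)^{j+1}})^\perp$ is exactly $j$-dimensional. Hence the closure of the range is exactly $\{g\in D_\alpha: g^{(i)}(1)=0,\ 0\le i\le j-1\}$, and Proposition~\ref{DRP} is the case $j=0$ of the same computation. The cost is portability: your argument depends on the $D_\alpha$ norm being diagonal in the monomial basis. The paper's approximants $(z-\zeta)^{j+1}/(rz-\zeta)$ are the same ones it reuses for $S^p_j(\D)$, $D^p(\D)$ and the disk algebra, where no such orthogonality is available.

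\textbf{Two small fixes.}
\begin{itemize}
\item In Step 1 the map $f(z)\mapsto f(\bar\zeta z)$ sends $(z-\zeta)^m$ to $\bar\zeta^m(z-\zeta^2)^m$, not to a multiple of $(z-1)^m$. You want $f(z)\mapsto f(\zeta z)$, which gives $\zeta^m(z-1)^m$. Alternatively, skip the rotation: for general $\zeta$ the recurrence is $(S-\zeta I)^{j+1}a=0$, so $a_k=\zeta^kP(k)$ and Step 3 is unchanged.
\item Your ``main obstacle'' is not needed for the theorem. To get $(z-1)^j\perp\mathcal{K}$ you only pair $f$ with the polynomial $(z-1)^j$. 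That pairing is the finite sum $\sum_{k=0}^{j}\binom{j}{k}(-1)^{j-k}P(k)$, the $j$-th forward difference of $P$ at $0$, which vanishes since $\deg P\le j-1$. So the theorem uses only $\alpha\le 2j+1$, just as the paper's estimate does. The bound $\alpha>2j-1$ matters only for identifying $\mathcal{K}$ with the functionals $E_1^{(i)}$.
\end{itemize}
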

\begin{proof}
Let $r$ be a nonnegative number less than $1$, and let $\zeta\in \partial \D$. Observe that $z\mapsto 1/(rz-\zeta)$ belongs to $D_\alpha$ for all $\alpha$.
 Let $g_{r,j}(z) = \frac{(z-\zeta)^{j+1}}{rz-\zeta} - (z-\zeta)^j$,  where $j$ is a positive integer.  For $z \in \D$, we have
 
 \begin{align*}
g_{r,j}(z) &= \frac{(z-\zeta)^{j+1}}{rz-\zeta} - (z-\zeta)^j\\
& = (z-\zeta)^j \left(\frac{z-\zeta}{rz-\zeta} - 1\right)\\
 & = -\bar{\zeta}(z-\zeta)^j\frac{(1-r)z}{1-\bar{\zeta}rz}\\
 & = -\bar{\zeta}(1-r)z(z-\zeta)^j\sum_{n=0}^\infty \bar{\zeta}^n r^n z^n. 
 \end{align*}
 
For $j\in \Z_{>0}$, set $h_{r,j}(z)=(1-r)(z-\zeta)^j\sum_{n=0}^\infty \bar{\zeta}^n r^n z^n$ (so that $g_{r,j}(z) = -\bar{\zeta}z h_{r,j}(z)$).
By induction, we will show that for each positive integer $j$,  
$$h_{r,j}(z)= (1-r)^{j+1}\sum_{n=j-1}^\infty \bar{\zeta}^{n-j+1} r^{n-j+1} z^{n+1}+p_{r,j}(z),$$ 
where 
$$p_{r,j}(z)=-\sum_{k=1}^{j}\zeta (1-r)^k(z-\zeta)^{j-k}z^{k-1}.$$

For $j=1$, we see that 
\begin{align*}
h_{r,1}(z)&=(1-r)(z-\zeta)\sum_{n=0}^\infty \bar{\zeta}^n r^n z^n\\
& = (1-r)\left(\sum_{n=0}^\infty \bar{\zeta}^n r^n z^{n+1} -  \sum_{n=0}^\infty \bar{\zeta}^{n-1} r^n z^n\right)\\
& = (1-r)\left(\sum_{n=0}^\infty \bar{\zeta}^n r^n z^{n+1} -  \sum_{n=0}^\infty \bar{\zeta}^{n} r^{n+1} z^{n+1} - \zeta\right)\\
& = (1-r)\left(\sum_{n=0}^\infty \bar{\zeta}^n r^n(1-r) z^{n+1}  - \zeta\right)\\
& = (1-r)^2\sum_{n=0}^\infty \bar{\zeta}^n r^n z^{n+1} - (1-r)\zeta\\
& = (1-r)^2\sum_{n=0}^\infty \bar{\zeta}^n r^n z^{n+1}+p_{r,1}(z).
\end{align*}

We assume that $h_{r,j}(z)= (1-r)^{j+1}\sum_{n=j-1}^\infty \bar{\zeta}^{n-j+1} r^{n-j+1} z^{n+1}+p_{r,j}(z)$ for some positive integer $j$.
We have 
 \begin{align*}
 h_{r,j+1}(z)&=(z-\zeta)h_{r,j}(z)\\
& =(1-r)^{j+1}(z-\zeta)\sum_{n=j-1}^\infty \bar{\zeta}^{n-j+1} r^{n-j+1} z^{n+1}+(z-\zeta)p_{r,j}(z)\\
&=(1-r)^{j+1}\sum_{n=j}^\infty (1-r)\bar{\zeta}^{n-j} r^{n-j} z^{n+1}-\zeta (1-r)^{j+1}z^j+(z-\zeta)p_{r,j}(z)\\
&=(1-r)^{j+1}\sum_{n=j}^\infty (1-r)\bar{\zeta}^{n-j} r^{n-j} z^{n+1}-\zeta (1-r)^{j+1}z^j-\sum_{k=1}^j \zeta (1-r)^k (z-\zeta)^{j+1-k} z^{k-1}\\
&=(1-r)^{j+2}\sum_{n=j}^\infty \bar{\zeta}^{n-j} r^{n-j} z^{n+1}+p_{r, j+1}(z),
 \end{align*}
 as desired.

Let $j$ be an arbitrary positive integer; we have 
  $$g_{r,j}(z)=-\overline{\zeta}z h_{r,j}(z)= -\overline{\zeta}(1-r)^{j+1}z \sum_{n=j-1}^\infty \bar{\zeta}^{n-j+1} r^{n-j+1} z^{n+1}-\overline{\zeta}zp_{r,j}(z).$$
 Let $\alpha$ satisfy  $2j-1<\alpha\leq 2j+1$, let  $\left\|M_z\right\|$ be the norm of $M_z: D_\alpha \to D_\alpha$, let 
 $$C_{j} = \sup \left\{\frac{(n+j+1)^{2j+1}}{(n+2j+1)\times \cdots \times(n+2)(n+1)}: n\in \Z_{\geq 0}\right\},$$
 and note that the supremum is finite.
  We have
\begin{align*}
\left\|g_{r,j}\right\|_\alpha & \le (1-r)^{j+1}\left\|M_z\right\|\left(\sum_{n=j-1}^\infty (n+2)^{2j+1} r^{2(n-j+1)}\right)^{1/2}+  \left\|M_z\right\|\left\|p_{r,j}\right\|_\alpha\\
& \le (1-r)^{j+1}\left\|M_z\right\|\left(\sum_{n=0}^\infty (n+j+1)^{2j+1} r^{2n}\right)^{1/2}+ \left\|M_z\right\|\left\|p_{r,j}\right\|_\alpha\\
  & \le (1-r)^{j+1}\left\|M_z\right\|\left(C_{j}\sum_{n=0}^\infty (n+2j+1)\times...\times(n+2)(n+1) r^{2n}\right)^{1/2}+ \left\|M_z\right\|\left\|p_{r,j}\right\|_\alpha\\
  &  = (1-r)^{j+1}\left\|M_z\right\| \sqrt{C_{j}} \frac{\sqrt{(2j+1)!}}{(1-r^2)^{j+1}} + \left\|M_z\right\|\left\|p_{r,j}\right\|_\alpha\\
  & = \frac{1}{(1+r)^{j+1}}\left\|M_z\right\| \sqrt{C_{j}} \sqrt{(2j+1)!} + \left\|M_z\right\|\left\|p_{r,j}\right\|_\alpha\\
  & \le \left\|M_z\right\| \sqrt{C_{j}}\sqrt{(2j+1)!} +\left\|M_z\right\|\left\|p_{r,j}\right\|_\alpha\\.
    \end{align*}
   Because $p_{r,j}$ is a polynomial, $\left\|g_{r,j}\right\|_\alpha$ is uniformly bounded for $0\le r < 1$. Therefore,  there is  a sequence $(r_n)$ of numbers in $[0, 1)$ with $\lim_{n\rightarrow \infty}r_n = 1$ such that $(g_{r_n,j})$ is weakly convergent in $D_\alpha$.  Observe that for each $z\in \D$, $g_{r_n,j}(z)$ converges to $0$. Thus, $(g_{r_n,j})$ is weakly convergent in $D_\alpha$ to the zero function; equivalently, the sequence $(f_{n,j})$ in the range of $M_{(z-\zeta)^{j+1}}$ defined by $f_{n,j}(z) = \frac{(z-\zeta)^{j+1}}{r_n z-\zeta}$ converges weakly to $f(z) = (z-\zeta)^j$. Because $f$ is in the weak closure of the subspace $\Ran\left(M_{(z-\zeta)^{j+1}}\right)$, it is also in the norm closure, which completes the proof.
 \end{proof}
 
Remark:  Let $j\in \Z_{>0}$. It is possible to show for each $\alpha$ with $2j-1< \alpha < 2j+1$ (note the strict inequality on the right-hand side) that the $D_\alpha$ norm of $g_{r,j}(z) = \frac{(z-\zeta)^{j+1}}{rz-\zeta} - (z-\zeta)^j$ converges to $0$ as $r\to 1^-$.  
\end{itemize}

\subsection{The derivative Hardy spaces}

For $j\in \Z_{> 0}$,  the $j$-derivative Hardy space $S_j^p(\D)$  consists of functions having $j$-th derivative in $H^p(\D)$ with the norm given by  
\begin{equation}\label{SPJN} 
 \|f\|^p_{S^p_{j}(\D)} =  \sum_{k=0}^{j-1}\left|f^{(k)}(0)\right|^p + \left\|f^{(j)}\right\|^p_{H^p(\D)}.
\end{equation}
The space $S_1^p(\D)$ is typically denoted by $S^p(\D)$ (see, e.g., \cite{Mac}). Various equivalent norms on $S^2(\D)$ are described in \cite{GL}, including the norm that Korenblum used in his paper \cite{Kor} characterizing the closed, invariant subspaces of $M_z$ on $S^2(\D)$. 

Because we are assuming $p \ge 1$, note that a function in $S_j^p(\D)$ is a $j$-order antiderivative of a function in $H^1(\D)$. Thus, all functions in $S_j^p(\D)$ extend continuously to $\D^-$; in fact, derivatives of functions in $S^p_j(\D)$ up to order $j-1$ have continuous extension to $\D^-$.  Thus,  for $f\in S_j^p(\D)$ and integers $k$ satisfying $0 \le k \le j-1$, we have $f^{(k)} \in H^\infty(\D) \subseteq H^p(\D)$.   

The final observation of the preceding paragraph shows $S_i^p(\D)\subseteq S_j^p(\D)$ when $i\ge j\ge 1$ and suggests the following norm for $S_j^p(\D)$:
\begin{equation}\label{MN}
\|f\|^p_{j} := \sum_{k=0}^j \left\|f^{(k)}\right\|^p_{H^p(\D)}.
\end{equation}
The norm $\|\cdot \|_j$ defined above is equivalent to the norm $\| \cdot\|_{S^p_j(\D)}$ with which we choose to work.   Because $|f(0)| \le \|f\|_{H^p(\D)}$ for $f\in H^p(\D)$, we clearly have $\|f\|_{S^p_j(\D)} \le \|f\|_j$ for all $f\in S^p_j(\D)$.   The reverse inequality may be proved by, e.g., iterating an inequality established in the proof of Theorem 2.1 of \cite{HANDHS} showing that if $f^{(k)}\in H^p(\D)$ for some positive integer $k$, then 
\begin{equation}\label{REFEN}
\left\|f^{(k-1)}\right\|_{H^p(\D)} \le 2^{1-1/p}\left(\left|f^{(k-1)}(0)\right|^p + \left\|f^{(k)}\right\|^p_{H^p(\D)}\right)^{1/p}.
\end{equation}
 We provide a different proof of the reverse inequality at the end of this subsection.  
 
 Basic properties of the spaces $S^p_j(\D)$ are described in \cite{MDHS, HANDHS}; for example,  $S_j^p(\D)$ is a Banach space (\cite[Lemma 2.2]{HANDHS}, \cite[Theorem 2.2]{MDHS}). The norm $\|\cdot\|_j$ of (\ref{MN}) for $S^p_j(\D)$ is used in \cite{MDHS} while \cite{HANDHS} uses another norm easily seen to be equivalent to $\|\cdot\|_{S^p_j(\D)}$ of (\ref{SPJN}). 

We verify that the spaces $S_j^p(\D)$ satisfy properties (i)--(vii) of Definition~\ref{defBS}.
\begin{itemize}[wide, itemsep=4pt]
\item[(i)] The density of the polynomials in $S^p_j(\D)$ for $j\in  \Z_{>0}$ follows from \cite[Proposition 2.8]{MDHS}. 

\item[(ii)] The point-evaluation functionals on the spaces $S^p_j(\D)$ are continuous (see, e.g., \cite[Proposition 2.4]{MDHS} or the final paragraph of this section).

\item[(iii)]  That $M_z$ is bounded on $S^p_j(\D)$ and has spectral radius 1 follows from Theorems 3.1 and 3.4 of \cite{MDHS}.

\item[(iv)]  We provide an inductive proof to show that property (iv) of Definition~\ref{defBS} holds for $S_j^p(\D)$ (which is similar to the proof used to show $D_\alpha$ has property (iv)).  We have already noted that if $qf\in H^p(\D)$ for some polynomial $q$ having all its zeros in $\D$ and $f\in H(\D)$, then $f\in H^p(\D)$. Suppose that $qf\in S_1^p(\D)$ for some polynomial $q$ having all its zeros in $\D$ and $f\in H(\D)$.  Then because the polynomial $q$ is a multiplier of $S_1^p(\D)$ (because, e.g.,  $z$ is a multiplier), we have $q^2f\in S_1^p(\D)$.  Thus,
\begin{equation}\label{INHP}
q^2 f' + \left(2q'\right)qf \in H^p(\D).
\end{equation}
 Because $qf\in S_1^p(\D)$, $qf$ is continuous on $\D^-$; thus, $qf \in H^p(\D)$, and $\left(2q'\right) qf \in H^p(\D)$ because $2q'$ is a polynomial. Because the second summand of (\ref{INHP}) belongs to $H^p(\D)$, the first must as well; that is,  $q^2f' \in H^p(\D)$. Because $q^2$ has all its zeros in $\D$, we have $f'\in H^p(\D)$, so that $f\in S^p_1(\D)$, as desired.  
  
  Suppose that for some positive integer $i$, $qf\in S_i^p(\D)$ implies $f\in S_i^p(\D)$ whenever $q$ is a polynomial with all its zeros in $\D$ and $f\in H(\D)$.  Let $q$ be a polynomial having all its zeros in $\D$ and let $qf\in S_{i+1}^p(\D)$ for some $f\in H(\D)$.  Then, $q^2f$ is also in $S_{i+1}^p(\D)$; thus,
  \begin{equation}\label{AIH}
    q^2 f' + \left(2q'\right)qf \in S_i^p(\D).
    \end{equation} 
    Because $qf\in S_{i+1}^p(\D)\subseteq S_i^p(\D)$ and $2q'$ is a polynomial, we see $(2q')qf \in S^p_i(\D)$. Because the second summand of (\ref{AIH}) belongs to $S_i^p(\D)$,  the first must as well; that is,  $q^2f' \in S_i^p(\D)$. Because $q^2$ has all its zeros in $\D$, we have $f'\in S^p_i(\D)$ by the induction hypothesis, and it follows that $f\in S^p_{i+1}$, as desired.  
      
\item[(v)] The space $S_j^p(\D)$ is an algebra \cite[Proposition 2.7]{MDHS}; thus, every element of the space is a multiplier of the space.  We establish that (v) holds for $S_j^p(\D)$ by an inductive argument (which is similar to the one used to show $D_\alpha$ has property (v)).   Suppose that $g$ is a multiplier of $S_1^p(\D)$ that is bounded away from $0$ on $\D$. Let $f\in S_1^p(\D)$ be arbitrary. We claim that $\frac{1}{g}f\in S_1^p(\D)$.  We have
$$
\left(\frac{1}{g} f\right)' = \frac{1}{g} f' + \frac{f}{g^2}\left(-g'\right).
$$
Because both $1/g$ and $\frac{f}{g^2}$ are in $H^\infty(\D)$ while both $f'$ and $-g'$ are in $H^p(\D)$ (because both $f$ and $-g$ belong to $S_1^p(\D)$), we see both summands in the preceding equation belong to $H^p(\D)$. Thus, $\frac{1}{g} f\in S_1^p(\D)$.  Because $f\in S_1^p(\D)$ is arbitrary, we conclude $1/g$ is a multiplier of $S_1^p(\D)$.  

Suppose that for some positive integer $i$, whenever a multiplier of $S_i^p(\D)$ is bounded away from $0$ on $\D$, its reciprocal is also a multiplier of $S_i^p(\D)$.   Let $g$ be a multiplier of $S_{i+1}^p(\D)$ that is bounded away from $0$ on $\D$.  Because $g \in S_{i+1}^p(\D)\subseteq S_i^p(\D)$ and $S_i^p(\D)$ is an algebra, we see $g$ is also a multiplier of $S_i^p(\D)$ that is bounded away from $0$ on $\D$. Thus, $1/g$ is a multiplier of $S^p_i(\D)$ by the induction hypothesis (equivalently, $1/g$ belongs to the algebra $S_i^p(\D)$).   Let $f\in S_{i+1}^p(\D)$ be arbitrary.  Note that  $f \in S_{i+1}^p(\D)\subseteq S_i^p(\D)$. 
We have
$$
\left(\frac{1}{g} f\right)' = \frac{1}{g} f' + \frac{f}{g^2}\left(-g'\right).
$$
Because $1/g, f, f'$, and $g'$ are all members of the algebra $S_i^p(\D)$, both summands on the right-hand side of the preceding equation are in $S_i^p(\D)$, and it follows that $\left(\frac{1}{g} f\right)' \in S_i^p(\D)$.  We conclude that $\frac{1}{g} f\in S^p_{i+1}(\D)$. Because $f\in S_{i+1}^p(\D)$ is arbitrary, $1/g$ is a multiplier of $S_{i+1}^p(\D)$, as desired.  

\item[(vi)]    Let $l$ be a positive integer.  Suppose that $(q_k)$ is a sequence of polynomials each having degree at most $l$, and  $q_k(z) = \sum_{m=0}^l a_{m,k}z^m$ is such that for each $m\in\{0,1, \ldots, l\}$  the coefficient sequence $(a_{m,k})_{k=1}^\infty$ converges to $0$.  Let $j\in \Z_{>0}$ be arbitrary. The coefficient-sequence condition for the sequence $(q_k)$ shows that for all integers $i$ satisfying $0 \le i \le j$, $\left\|q_k^{(i)}\right\|_\infty\to 0$ as $k\to \infty$. It follows that $(q_k)$ converges to $0$ in $S_j^p(\D)$, as desired.  

\item[(vii)] We show that for every $j\in \Z_{>0}$, $S_j^p(\D)$ has boundary-extension type $j$.  We have already observed that $S^p_j(\D) \subset C^{j-1}(\D^-)$.  Proposition~\ref{SD2} below completes the proof. 

\begin{lemma}\label{UBL} Let $0 \le r < 1$.  If $f(z) =z-\zeta$ and $u_r(z) = rz - \zeta$, then  $\|f/u_r\|_{\infty} \le 2$.
\end{lemma}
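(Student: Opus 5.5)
We need to prove that $|(z-\zeta)/(rz-\zeta)|\le 2$ for all $z\in\D^-$ (where the sup norm is taken over $\D$, equivalently over $\D^-$ by continuity), with $|\zeta|=1$ and $0\le r<1$. The plan is a direct triangle-inequality estimate comparing numerator and denominator.

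First, note that $u_r(z)=rz-\zeta$ does not vanish on $\D^-$, since $|rz|\le r<1=|\zeta|$. Hence $f/u_r$ is analytic on a neighborhood of $\D^-$, and it suffices to bound $|z-\zeta|/|rz-\zeta|$ pointwise for $z\in\D^-$.

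The key step is to write
$$
z-\zeta = (rz-\zeta) + (1-r)z,
$$
so that
$$
\frac{|z-\zeta|}{|rz-\zeta|}\le 1+\frac{(1-r)|z|}{|rz-\zeta|}.
$$
It then remains to show $(1-r)|z|\le |rz-\zeta|$ for $|z|\le 1$. This follows from the reverse triangle inequality:
$$
|rz-\zeta|\ge |\zeta|-r|z| = 1-r|z|\ge |z|-r|z| = (1-r)|z|,
$$
where the middle inequality uses $|z|\le 1$. Combining the two displays gives the pointwise bound $2$, and hence $\|f/u_r\|_\infty\le 2$.

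No real obstacle is expected. The only point needing care is the inequality $1-r|z|\ge(1-r)|z|$, which is equivalent to $|z|\le 1$, together with the observation that the denominator is nonzero so that the quotient is a genuine bounded analytic function on $\D$.
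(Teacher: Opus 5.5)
Your proof is correct and follows essentially the same argument as the paper. Both write $\frac{z-\zeta}{rz-\zeta} = 1 + \frac{(1-r)z}{rz-\zeta}$ and then use $|rz-\zeta|\ge 1-r|z|\ge (1-r)|z|$; your added observation that $u_r$ has no zeros on $\D^-$ is a harmless detail the paper leaves implicit.
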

\begin{proof}  Let $z\in \D$; then
$$
\left|\frac{f(z)}{u_r(z)}\right| = \left| \frac{z-\zeta}{rz -\zeta}\right|\\
   = \left| 1   + \frac{(1-r)z}{rz-\zeta}\right|\\
    \le 1 + \frac{|(1-r)z|}{|\zeta - rz|} \\
     \le 1 + \frac{(1-r)|z|}{1 - r|z|}\\
    \le 2 \quad (|z| < 1) .
    $$
   \end{proof}
   
   The following proposition, together with our observation that $S_1^p(\D) \subset C^0(\D^-)$, shows that $S_1^p(\D)$ has boundary-extension type 1. 
   
    \begin{proposition}\label{SD} Let $p\ge 1$. For $\zeta\in \partial \D$, the closure of the range of the operator $M_{(z-\zeta)^2}: S_1^p(\D)\to S_1^p(\D)$ includes $f(z)= z-\zeta$.
\end{proposition}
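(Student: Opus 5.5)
Fix $\zeta\in\partial\D$. I will use the approximants suggested by Lemma~\ref{UBL}: for $0\le r<1$ set $u_r(z)=rz-\zeta$ and
$$
F_r(z)=\frac{(z-\zeta)^2}{rz-\zeta}=(z-\zeta)^2\cdot\frac{1}{u_r(z)} .
$$
Since $1/u_r$ is analytic on a neighborhood of $\D^-$, it lies in $S_1^p(\D)$, so $F_r\in\Ran\big(M_{(z-\zeta)^2}\big)$. The goal is to show $F_r\to f$ in $S_1^p(\D)$, where $f(z)=z-\zeta$. Failing that, I will show $F_r\to f$ weakly along a sequence $r_n\to1$ and conclude by the weak closedness of closed subspaces, as in the proof of Theorem~\ref{BERC}.

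Write $G_r=F_r-f$, so that
$$
G_r(z)=(z-\zeta)\Big(\frac{z-\zeta}{rz-\zeta}-1\Big)=\frac{(1-r)z(z-\zeta)}{rz-\zeta}.
$$
For the norm, $G_r(0)=0$, so only $\|G_r'\|_{H^p}$ matters. Differentiating gives
$$
G_r'=(1-r)\Big[\frac{2z-\zeta}{rz-\zeta}-\frac{rz(z-\zeta)}{(rz-\zeta)^2}\Big].
$$
Lemma~\ref{UBL} gives $|(z-\zeta)/(rz-\zeta)|\le2$, so the second term is at most $2(1-r)/|rz-\zeta|$ in modulus. Since $|2z-\zeta|\le 3$, the first term is at most $3(1-r)/|rz-\zeta|$. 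Hence
$$
|G_r'(z)|\le \frac{5(1-r)}{|rz-\zeta|},
$$
and on the circle $|\zeta-re^{i\theta}|\ge 1-r$, so $\|G_r'\|_\infty\le 5$ uniformly in $r$. Moreover $(1-r)/|\zeta-re^{i\theta}|\to0$ for every $e^{i\theta}\neq\zeta$ and is bounded by $1$. By dominated convergence, $\|G_r'\|_{H^p}^p=\frac1{2\pi}\int|G_r'(e^{i\theta})|^p\,d\theta\to0$. Thus $G_r\to0$ in norm for every $p\ge1$, and $f$ lies in the norm closure of the range.

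The main obstacle is the uniform bound on $G_r'$. The naive estimate of $(1-r)/(rz-\zeta)^2$ blows up like $1/(1-r)$ near $\zeta$; the cancellation comes from pairing one factor $(z-\zeta)$ with one factor $(rz-\zeta)$ via Lemma~\ref{UBL}, which is exactly why that lemma was proved. A second point to check is that the boundary values of $G_r'$ are the continuous extension, so the $H^p$ norm is the boundary integral. This holds because $G_r'$ is analytic across $\partial\D$ for $r<1$.

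If dominated convergence were somehow unavailable, the uniform bound alone gives boundedness of $(G_r)$ in $S_1^p(\D)$. For $p>1$ the space is reflexive, so weak convergence to $0$ follows along a subsequence, since $G_r\to0$ pointwise. The case $p=1$ is the reason to prefer the direct norm argument above.
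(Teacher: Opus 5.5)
Your proof is correct, and its setup matches the paper's. Both use the approximants $(z-\zeta)^2/(rz-\zeta)$. Both observe that the difference with $f$ vanishes at $0$, so only the $H^p$ norm of its derivative matters. Both use Lemma~\ref{UBL} to absorb one factor $(z-\zeta)/(rz-\zeta)$, which leaves a pointwise bound by a constant times $(1-r)/|1-r\bar\zeta z|$ (the paper's constant is $3$, yours is $5$).

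The two routes diverge only at the last step.

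The paper computes $\|(1-r)/(1-r\bar\zeta z)\|_{H^p}$ with the integral-means estimate from Duren's lemma. This gives the explicit rate $3C(1-r)^{1/p}$, but only for $p>1$; at $p=1$ the integral grows like $\log\frac{1}{1-r}$. The paper therefore handles $p=1$ separately, through $\|\cdot\|_{H^1}\le\|\cdot\|_{H^2}$.

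Your dominated-convergence step needs only two facts: the uniform bound $(1-r)/|re^{i\theta}-\zeta|\le 1$, and pointwise convergence to $0$ for $e^{i\theta}\neq\zeta$. This is more elementary, uses no external estimate, and covers every $p\ge 1$ at once. The cost is that you get no rate of convergence. Since the norm convergence already holds for $p=1$, your weak-compactness fallback is unnecessary.
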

\begin{proof}  Let $p > 1$ and let $\zeta\in \partial \D$.  We show $f(z) = z- \zeta$ is in the closure of the range of $M_{(z-\zeta)^2}: S_1^p(\D) \to S_1^p(\D)$.    

Let $0 < r < 1$.  Because $z\mapsto \frac{1}{rz - \zeta}$ has bounded derivative, it belongs to $S_1^p(\D)$. Thus, $h_r(z) := (z-\zeta)^2\frac{1}{rz-\zeta}$ is in the range of $M_{(z-\zeta)^2}$ on $S_1^p(\D)$.  We have  

 \begin{align*} \|h_r-f\|_{S_1^p(\D)}  &=  \left\|(h_r)' - f' \right\|_{H^p(\D)} \quad (\text{note}\ \left(h_r-f\right)(0)= 0)\\
   & =  \left\|\frac{2(z-\zeta)}{rz - \zeta} - \frac{(z-\zeta)^2r}{(rz-\zeta)^2}- 1\right\|_{H^p(\D)}\\
     & =  \left\|\frac{z-\zeta}{rz - \zeta} - \frac{(z-\zeta)^2r}{(rz-\zeta)^2} - \left(1- \frac{z-\zeta}{rz - \zeta}\right)\right\|_{H^p(\D)}\\
    &\le \left\| \frac{z-\zeta}{rz - \zeta}\right\|_\infty  \left\|1 - \frac{(z-\zeta)r}{rz-\zeta}\right\|_{H^p(\D)} + \left\|1- \frac{z-\zeta}{rz - \zeta}\right\|_{H^p(\D)}\\
    &\le 3 \left\|\frac{1-r}{1-r\bar{\zeta} z}\right\|_{H^p(\D)} \quad (\text{Lemma~\ref{UBL}})\\
    & = 3(1-r)\left(\frac{1}{2\pi} \int_0^{2\pi}\frac{1}{\left(\left|1-r\bar{\zeta}e^{i\theta}\right|^2\right)^{p/2}}\, d\theta\right)^{1/p}\\
    &\le  3C(1-r)\left(\frac{1}{(1-r)^{p-1}}\right)^{1/p} (\text{for some}\ C > 0\ \text{by  \cite[Lemma, p.\ 65]{Dur}})\\
    & = 3 C(1-r)^{1/p}.
         \end{align*}
    Thus,  $\displaystyle \lim_{r\to 1^-} \left\|h_r-f\right\|_{S_1^p(\D)} = 0$, and it follows that $f$ is in the closure of range of $M_{(z-\zeta)^2}$ on $S_1^p(\D)$ for every $p > 1$.  Because $\left\|(h_r - f)'\right\|_{H^1(\D)} \le \left\|(h_r - f)'\right\|_{H^2(\D)}$, we conclude that $f$ is also in the closure of range of $M_{(z-\zeta)^2}$ on $S_1^1(\D)$, which completes the proof.
\end{proof}

\begin{proposition}\label{SD2} Let $p\ge 1$ and let $j\in \Z_{>0}$. For $\zeta\in \partial\D$, the closure of the range of the operator $M_{(z-\zeta)^{j+1}}: S^p_{j}(\D)\to S^p_{j}(\D)$ includes $f(z)= (z-\zeta)^{j}$.
    \end{proposition}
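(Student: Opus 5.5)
Take the same approximants as in Proposition~\ref{SD}: for $0<r<1$ put
$$
h_r(z) = \frac{(z-\zeta)^{j+1}}{rz-\zeta},
$$
which lies in $\Ran(M_{(z-\zeta)^{j+1}})$ because $1/(rz-\zeta)$ is analytic on a neighborhood of $\D^-$ and hence belongs to $S^p_j(\D)$. Write $\phi_r(z) = \frac{z-\zeta}{rz-\zeta}$, so that
$$
h_r - f = (z-\zeta)^j(\phi_r - 1), \qquad \phi_r - 1 = -\bar\zeta\,\frac{(1-r)z}{1-r\bar\zeta z}.
$$
We show $\|h_r-f\|_{S^p_j(\D)}\to 0$ as $r\to1^-$. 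As in Proposition~\ref{SD}, we first treat $p>1$; the case $p=1$ then follows from $\|g\|_{H^1}\le\|g\|_{H^2}$ applied to the $j$-th derivative, together with the fact that the finitely many point terms $|(h_r-f)^{(k)}(0)|$ do not depend on $p$.

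\emph{Point terms.} For $0\le k\le j-1$, the quantity $(h_r-f)^{(k)}(0)$ is a fixed polynomial combination of Taylor coefficients of $\phi_r-1$ at $0$. Each such coefficient carries a factor $(1-r)$ and is otherwise bounded in $r$. Hence $\sum_{k=0}^{j-1}|(h_r-f)^{(k)}(0)|\to0$.

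\emph{Derivative term.} By Leibniz,
$$
(h_r-f)^{(j)} = \sum_{i=0}^{j}\binom{j}{i}\,\bigl((z-\zeta)^j\bigr)^{(j-i)}\,(\phi_r-1)^{(i)},
$$
and $\bigl((z-\zeta)^j\bigr)^{(j-i)} = c_{i}\,(z-\zeta)^{i}$. So it suffices to show $\|(z-\zeta)^i(\phi_r-1)^{(i)}\|_{H^p}\to0$ for each $0\le i\le j$. Write $\phi_r - 1 = -\bar\zeta (1-r) z\,(1-r\bar\zeta z)^{-1}$. Differentiating $i$ times gives a finite sum of terms of the form $(1-r)\,r^{m}\,z^{\epsilon}(1-r\bar\zeta z)^{-1-m}$ with $0\le m\le i$, $\epsilon\in\{0,1\}$, and bounded constants. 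Multiplied by $(z-\zeta)^i$, each term is dominated by
$$
(1-r)\,\frac{|z-\zeta|^{i}}{|1-r\bar\zeta z|^{1+m}} \le (1-r)\,\frac{|z-\zeta|^{m}}{|1-r\bar\zeta z|^{m}}\cdot\frac{2^{i-m}}{|1-r\bar\zeta z|},
$$
using $|z-\zeta|\le2$. Lemma~\ref{UBL} (note $|1-r\bar\zeta z| = |rz-\zeta|$) gives $|z-\zeta|/|1-r\bar\zeta z|\le2$ on $\D$. Each term is therefore bounded by a constant times $(1-r)/|1-r\bar\zeta z|$. Its $H^p$ norm is at most $C(1-r)^{1/p}$ by the same estimate \cite[Lemma, p.\ 65]{Dur} used in Proposition~\ref{SD}, and this tends to $0$ for $p>1$.

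\emph{Main obstacle.} The only delicate point is the bookkeeping of the Leibniz and derivative expansion. One must check that every term produced has at least as many factors of $(z-\zeta)$ as excess powers of $(1-r\bar\zeta z)^{-1}$ beyond the first, so that Lemma~\ref{UBL} absorbs them and leaves a single factor $(1-r)/(1-r\bar\zeta z)$. The count above confirms this: the $i$-th derivative of $\phi_r-1$ has denominators of power at most $i+1$, and it is paired with exactly $(z-\zeta)^i$. Once this is verified, the conclusion is immediate: $f$ lies in the norm closure of $\Ran(M_{(z-\zeta)^{j+1}})$.
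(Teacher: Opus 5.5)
Your proof is correct, and it takes a genuinely different and shorter route than the paper's.

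\textbf{What the paper does.} It never shows $\|h_r-f\|_{S^p_j(\D)}\to0$. It proves only that $g_{r,j}=h_r-f$ is bounded in $S^p_j(\D)$ uniformly in $r$. This is done by induction on $j$ through $g_{r,j+1}=(z-\zeta)g_{r,j}$. In that induction the term $(z-\zeta)g_{r,j}^{(j+1)}$ is estimated only in sup norm via Lemma~\ref{UBL}, so the decay in $1-r$ is lost. For $p>1$ it then invokes reflexivity of $S^p_j(\D)$ to extract a weakly convergent sequence $(g_{r_n,j})$. It identifies the weak limit as $0$ by pointwise convergence and uses that weak and norm closures of a subspace agree. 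The case $p=1$ is handled by comparison with $S^2_j(\D)$, as you do.

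\textbf{What you do differently.} You apply Leibniz to the factored form $(z-\zeta)^j(\phi_r-1)$ rather than to $h_r$ itself, which exposes a factor $1-r$ in every term. Indeed, for $i\ge1$,
\[
(\phi_r-1)^{(i)}=-\bar\zeta(1-r)\,i!\,(r\bar\zeta)^{i-1}(1-r\bar\zeta z)^{-1-i},
\]
which confirms your bookkeeping. Lemma~\ref{UBL} then reduces everything to $\|(1-r)/(1-r\bar\zeta z)\|_{H^p}=O((1-r)^{1/p})$.

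\textbf{What your route buys.} It needs neither the cited reflexivity theorem nor the subsequence and weak-limit step, and it gives an explicit rate. In a few lines it also establishes the claim of the paper's closing Remark, that $\|g_{r,j}\|_{S^p_j(\D)}\to0$, which the authors say requires a longer inductive argument. It even works directly at $p=1$, since $\|(1-r)/(1-r\bar\zeta z)\|_{H^1}=O\bigl((1-r)\log\frac{1}{1-r}\bigr)$.

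\textbf{What the paper's route buys.} Because it needs only boundedness, it uses the same template as Theorem~\ref{BERC} for $D_\alpha$. At the endpoint $\alpha=2j+1$ there, the approximants $g_{r,j}$ stay bounded but do not tend to $0$ in norm (cf.\ the remark after that theorem), so weak compactness is genuinely needed in that setting.
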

    \begin{proof}   Let $0 < r < 1$.  Because $z\mapsto \frac{1}{rz - \zeta}$ has bounded derivatives, it belongs to $S^p_{j}(\D)$ for each positive integer $j$. Thus, for $j\in \Z_{>0}$,  $ (z-\zeta)^{j+1}\frac{1}{rz-\zeta}$ is in the range of $M_{(z-\zeta)^{j+1}}$ on $S^p_j(\D)$.  For $j\in \Z_{>0}$, let $g_{r,j}(z) = \frac{(z-\zeta)^{j+1}}{rz-\zeta} - (z-\zeta)^{j}$.
    
 Let $j$ be a positive integer.  We have
      $$
      \left\|g_{r,j}\right\|^p_{S^p_{j}(\D)} = \sum_{k=0}^{j-1}\left|g^{(k)}_{r,j}(0)\right|^p +  \left\|g^{(j)}_{r,j}\right\|^p_{H^p(\D)}.
      $$   
      We show that $\left\|g_{r,j}\right\|_{S^p_{j}(\D)}$ is uniformly bounded for $0 < r < 1$.  
   
 Note that for $n< j$, we have
     \begin{align*}
     g_{r,j}^{(n)}(z) &=\sum_{k=0}^{n}\binom{n}{k}\left((z-\zeta)^{j+1}\right)^{(n-k)} \bigg(\frac{1}{rz-\zeta}\bigg)^{(k)}-\left((z-\zeta)^{j}\right)^{(n)}\\
     & =\sum_{k=0}^{n}(-1)^k \binom{n}{k} k! r^k\frac{(j+1)!}{(j+1-(n-k))!}(z-\zeta)^{j+1-n+k}\frac{1}{(rz-\zeta)^{k+1}} - \frac{j!}{(j-n)!}(z-\zeta)^{j-n}.
     \end{align*}
   For  $n< j$ and $0 < r < 1$, we obtain from the computation above the following crude overestimate by applying the triangle inequality, using $k!r^k \le n!$ for each term of the initial sum, and replacing the factorials appearing in the denominators with $1$:

   \begin{equation}\label{ZTUB}
     \left|g_{r,j}^{(n)}(0)\right|\leq  n!(j+1)! \sum_{k=0}^n  \binom{n}{k} + j! \le   2^{(j-1)}(j-1)!(j+1)! + j!.
\end{equation}
    
    We claim that for each positive integer $j$, $\left\|g_{r,j}^{(j)}\right\|_{H^{p}(\D)}$  is uniformly bounded for $ 0 < r<1$. By the proof of Proposition~\ref{SD}, $\left\|g'_{r,1}\right\|_{H^{p}(\D)}$ is uniformly bounded for $ 0 < r<1$ (in fact, $\lim_{r\to 1^-} \left\|g'_{r,1}\right\|_{H^{p}(\D)} = 0$). Suppose that for some positive integer $j$,  $\left\|g_{r,j}^{(j)}\right\|_{H^{p}(\D)}$  is uniformly bounded for $0 < r<1$. We see that $g_{r,j+1}(z)=(z-\zeta)g_{r,j}(z)$; thus, we obtain
     \begin{align*}
     g_{r,j+1}^{(j+1)}(z) &=\sum_{k=0}^{j+1}\binom{j+1}{k}(z-\zeta)^{(k)}g_{r,j}^{(j+1-k)}(z)\\
     & =(z-\zeta)g_{r,j}^{(j+1)}(z)+(j+1)g_{r,j}^{(j)}(z).
        \end{align*}
        By the induction hypothesis, there is a constant $C$ such that  $\left\|g_{r,j}^{(j)}\right\|_{H^p(\D)} \leq C$  for $ 0< r<1$. 
     We observe that 
     \begin{align*}
     (z-\zeta)g_{r,j}^{(j+1)}(z)&=(z-\zeta)\sum_{k=0}^{j+1}\binom{j+1}{k}\big((z-\zeta)^{j+1}\big)^{(j+1-k)} \bigg(\frac{1}{rz-\zeta}\bigg)^{(k)}\\
     & =\sum_{k=0}^{j+1}(-1)^k \binom{j+1}{k}r^k (j+1)! (z-\zeta)^{k+1}\frac{1}{(rz-\zeta)^{k+1}}.
     \end{align*}
     By Lemma~\ref{UBL}, $\left\|(z-\zeta)g_{r,j}^{(j+1)}\right\|_{H^p(\D)}\leq \left\|(z-\zeta)g_{r,j}^{(j+1)}\right\|_{\infty}  \le  2^{2j+3}
     (j+1)!$ for $0 < r < 1$. Therefore, $\left\|g_{r,j+1}^{(j+1)}\right\|_{H^{p}(\D)}\le 2^{2j+3}(j+1)! + C(j+1)$ for $ 0 < r<1$, and so, by induction, for each positive integer $j$,   $\left\|g^{(j)}_{r,j}\right\|_{H^p(\D)}$  is  uniformly bounded for $ 0 < r<1$.   Hence,  for every positive integer $j$,  $\left\|g_{r,j}\right\|_{S^p_{j}(\D)}$ is uniformly bounded for $ 0 < r<1$ (because (\ref{ZTUB}) shows that for each nonnegative integer $n<j$, $\left|g_{r,j}^{(n)}(0)\right|$ is uniformly bounded for $0 < r < 1$).
     
    Let  $j$ be an arbitrary positive integer, and let $p > 1$.  Because $\left\|g_{r,j}\right\|_{S^p_{j}(\D)}$  is uniformly bounded for $0 < r<1$ and $S^p_j(\D)$ is a reflexive Banach space \cite[Theorem 2.3]{MDHS},  there is  a sequence $(r_n)$ of numbers in $(0, 1)$ with $\lim _{n \rightarrow \infty}r_n = 1$ such that $(g_{r_n,j})$ is  weakly convergent in $S^p_{j}(\D)$. Note that for each $z \in \D$, $\lim_{n\to \infty} E_{z}(g_{r_n,j})=  0$.
Thus, the weak limit of  $(g_{r_n,j})$ in $S^p_{j}(\D)$ is the zero function.  Equivalently, the sequence $(f_n)$ in the range of $M_{(z-\zeta)^{j+1}}$ defined by $f_n(z) = \frac{(z-\zeta)^{j+1}}{r_n z-\zeta}$ converges weakly to $f(z) = (z-\zeta)^j$ in  $S^p_{j}(\D)$.  Because $f$ is in the weak closure of the subspace $\Ran\left(M_{(z-\zeta)^{j+1}}\right)$, it is also in the norm closure, and we have proved the proposition for $p > 1$. In particular, note that we have established that $f$ is in the norm closure of the range of $M_{(z-\zeta)^{j+1}}$ on   $S^2_j(\D)$.  Since $\|\cdot\|_{S^1_{j}(\D)} \leq  \sqrt{j+1}\, \|\cdot\|_{S^2_{j}(\D)}$, we conclude that $f$ is in the norm closure of  $\Ran\left(M_{(z-\zeta)^{j+1}}\right)$ on $S^1_{j}(\D)$, which completes the proof. 
  \end{proof}
  
  \begin{remark}
 By a longer and  more challenging inductive argument, it is possible to show that  for each $j\in \Z_{>0}$ and $p\ge 1$,   the $S_j^p(\D)$ norm of  $g_{r,j}(z) = \frac{(z-\zeta)^{j+1}}{rz-\zeta} - (z-\zeta)^{j}$ converges to $0$ as $r\to 1^-$. 
\end{remark}  
\end{itemize}

We conclude this subsection by giving an alternative proof of one of the two inequalities that establish the equivalence of the norms (\ref{SPJN}) and (\ref{MN}) by showing there is a constant $\gamma$ such that $\|f\|_j\le \gamma \|f\|_{S^p_j(\D)}$ for all $f\in S_j^p(\D)$.  First, note that if $(f_k)$ is a sequence of functions in $S_j^p(\D)$ converging in the norm $\| \cdot\|_{S_j^p(\D)}$ to the function $f$, then $(f_k^{(j)})$ converges uniformly on compact subsets of $\D$  to $f^{(j)}$ (because $(f_k^{(j)})$ converges to $f^{(j)}$ in $H^p(\D)$).  Because $\left|f_k^{(j-1)}(0)- f^{(j-1)}(0)\right|$ converges to $0$ as $k\to \infty$, we conclude that $f_k^{(j-1)}(z) = \int_0^z f_k^{(j)}(w)\, dw  + f_k^{(j-1)}(0)$ converges uniformly on compact subsets of $\D$ to $f^{(j-1)}(z) = \int_0^z f^{(j)}(w)\, dw + f^{(j-1)}(0)$.  Continuing this integration process if necessary, we conclude that $(f_k)$ converges uniformly on compact subsets of $\D$ to $f$, so that, in particular, the point-evaluation functionals $E_z$ for $z\in \D$ are continuous on $S^p_j(\D)$ with norm $\left\|\cdot \right\|_{S^p(\D)}$.  Because $S^p_j(\D) \subset C^{j-1}(\D^-)$ and the point-evaluation functionals $E_z$, $z\in \D$, are continuous, the Closed-Graph Theorem shows that the identity operator $I: S^p_j(\D) \to C^{j-1}(\D^-)$ is continuous (just as in Remark (ii) following Definition~\ref{BETD}). Thus, there is a constant $\gamma_1$ such that  $\|f\|_{C^{j-1}(\D^-)} \le \gamma_1 \|f\|_{S^p_j(\D)}$ for all $f\in S_j^p(\D)$.  By the definition of $\| \cdot\|_{C^{j-1}(\D^-)}$, we have
$$
\sum_{k=0}^{j-1} \left\|f^{(k)}\right\|_{\infty}  \le \gamma_1 \|f\|_{S^p_j(\D)}, \ \text{for all}\  f\in S_j^p(\D).
$$
Hence, for $f\in S^p_j(\D),$
\begin{align*}
\|f\|_j & = \left(\sum_{k=0}^{j-1}\left\|f^{(k)}\right\|^p_{H^p(\D)} + \left\|f^{(j)}\right\|^p_{H^p(\D)}\right)^{1/p}\\
         & \le \sum_{k=0}^{j-1}\left\|f^{(k)}\right\|_{H^p(\D)} + \left\|f^{(j)}\right\|_{H^p(\D)}\\
         & \le  \sum_{k=0}^{j-1}\left\|f^{(k)}\right\|_\infty + \|f\|_{S^p_j(\D)}\\
         & \le (\gamma_1 +1) \|f\|_{S^p_j(\D)}.
          \end{align*}
 Thus, $\|f\|_j\le \gamma   \|f\|_{S^p_j(\D)}$, where $\gamma = \gamma_1 + 1$, as desired.

\subsection{Additional examples}
Additional examples of Banach spaces of analytic functions on $\D$ satisfying requirements (i)--(vii) of Definition~\ref{defBS} include the disk algebra $A$, consisting of all analytic functions on $\D$ extending continuously to $\D^-$ and endowed with the supremum norm $\|\cdot\|_\infty$.   The space $A$ has boundary-extension type $1$: by definition, $A\subset C^0(\D^-)$, and for each $r\in [0, 1)$ and $\zeta\in \partial\D$, we have
$$
\left\|\frac{(z-\zeta)^2}{rz - \zeta} - (z-\zeta)\right\|_\infty = (1-r)\left\|\frac{z(z-\zeta)}{rz - \zeta}\right\|_\infty \le 2(1-r),
$$
where we have used Lemma~\ref{UBL} to obtain the inequality.  Verifying that properties (i)--(vi) of Definition~\ref{defBS} hold for $A$ is straightforward. 

 It is also easy to verify, using essentially the same arguments as those in Section~\ref{HWBE} for standard-weight Bergman spaces, that all radial-weight Bergman spaces  (see, e.g.,  \cite{KM})  satisfy Definition~\ref{defBS}.  These  radial-weight Bergman spaces $A^p_G(\D)$ are defined by
 $$
 A_G^p(\D) = \left\{f\in H(\D): \|f\|^p_{A^p_G(\D)} : = \int_\D |f(z)|^p\, G\left(|z|\right)\, dA(z)< \infty\right\},
 $$
 where $G$ is a positive continuous function on $(0,1)$ such that  $\int_0^1 G(r)r\, dr <\infty$. These spaces have boundary-extension type $0$.
 
       Another family of Banach spaces for which Definition~\ref{defBS} holds is the Dirichlet-space family $D^p(\D)$, $p\ge 1$, defined by
     $$
     D^p(\D) =\left\{f\in H(\D): \|f\|^p_{D^p(\D)} : = |f(0)|^p + \left\|f'\right\|^p_{A^p(\D)}\right\},
     $$
     where $A^p(\D)$ is the unweighted Bergman-$p$ space.  Note that $D^2(\D)$ is $D_\alpha$ with $\alpha = 1$, where the norms of the two spaces are equivalent.  Verifying that the spaces $D^p(\D)$ (for $p \ge 1$) satisfy properties (i)--(vi) of Definition~\ref{defBS} is straightforward based on the methods already used in this section.  For property (vii), there is a dichotomy: if $1\le p \le 2$, then the space $D^p(\D)$ has boundary-extension type $0$; however, if $p > 2$, the space $D^p(\D)$ has boundary-extension type $1$.  We provide a proof below. 
     
  Let $1 < p \le 2$, let $\zeta\in \partial \D$, and let $t\in (0,1)$.  We have 
     \begin{align*}
      \left\|\frac{z-\zeta}{tz-\zeta} - 1 \right\|_{D^p(\D)} & = (1-t) \left\|\frac{-\zeta}{(tz-\zeta)^2}\right\|_{A^p(\D)}\\
      &  = (1-t)\left( \frac{1}{\pi} \int_\D  \frac{1}{|tz-\zeta|^{2p}}\, dA(z)\,  \right)^{1/p}\\
       &   = (1-t) \left( \frac{1}{\pi} \int_0^1\int_0^{2\pi}\frac{1}{\left(\left|1 - tr\bar{\zeta}e^{i\theta}\right|^2\right)^p}\,  r\, d\theta \, dr\right)^{1/p}\\
         & \le C (1-t) \left(\int_0^1 \frac{1}{(1 - rt)^{2p-1}}\,  dr\right)^{1/p} (\text{for some}\ C >0\  \text{by \cite[Lemma, p.\ 65]{Dur}})\\
        & \le \frac{C}{t^{1/p}(2p-2)^{1/p}} (1-t)^{2/p -1}.
        \end{align*}
            
        For $1 < p < 2$, the inequality above yields $\left\|\frac{z-\zeta}{tz-\zeta} - 1 \right\|_{D^p(\D)} \to 0$ as $t\to 1^-$, so that for these spaces, we have shown $D^p(\D)$ has boundary-extension type $0$.  For $p =2$, the inequality above shows that $\left\|\frac{z-\zeta}{tz-\zeta} - 1 \right\|_{D^p(\D)}$ is uniformly bounded for $0 < t < 1$, and a weak-convergence argument shows that $1$ belongs to the closure of the range of $M_{z-\zeta}: D^2(\D) \to D^2(\D)$. (Alternatively, that $D^2(\D)$ has boundary-extension type $0$ follows from Proposition~\ref{DRP} because $D^2(\D)$ is $D_\alpha$ for $\alpha =1$.)  Finally, if $p =1$, then the final line of the inequality above becomes ``$\le C (1-t)\log(1/(1-t))/t$,'' and we have $\left\|\frac{z-\zeta}{tz-\zeta} - 1 \right\|_{D^1(\D)} \to 0$ as $t\to 1^-$, as desired.

       Now, we show that $D^p(\D)$ has boundary-extension type $1$ for $p > 2$.  First, we note that for all $p\ge1$ and all $z\in \D$, we have
    \begin{equation}
    |f(z)| \le \frac{\|f\|_{A^p(\D)}}{(1-|z|)^{2/p}};
    \end{equation}
         see, e.g, \cite[Theorem 2.1]{ZSHF}.  Thus, if $f'\in A^p(\D)$ for some $p> 2$, then $|f'(z)| \le \frac{\left\|f'\right\|_{A^p(\D)}}{(1-|z|)^{1-\alpha}}$, where $\alpha := 1-2/p$ satisfies $0 < \alpha < 1$; thus, $f$ extends to be continuous on $\D^-$ by a result due to Hardy and Littlewood (\cite[Theorem 5.1]{Dur}).  To see that $D^p(\D)$ has boundary-extension type $1$ for $p > 2$, let $p > 2$, let $\zeta\in \partial\D$, let $t\in (0,1)$, and, as in the proof of Proposition~\ref{SD}, let $h_t(z) = (z-\zeta)^2/(tz - \zeta)$ and $f(z) = z-\zeta$.    
         We have
         $$\left\|h_t - f\right\|_{D^p(\D)} = \left\|h_t' - f'\right\|_{A^p(\D)} \le \left\|h_t' - f'\right\|_{H^p(\D)} \le 3C(1-t)^{1/p}
         $$
         for some $C > 0$ by the proof of Proposition~\ref{SD}.  Thus, for $p > 2$, $D^p(\D)$ has boundary-extension type $1$, as claimed.  

    \section{Questions} 
    
  We have shown that properties (i)--(vii) of Definition~\ref{defBS} are sufficient to ensure that a multiplication operator on $\BS$  is Fredholm if and only if its symbol is bounded away from $0$ near $\partial \D$ (where $\BS$ is a Banach space of analytic functions on $\D$ satisfying (i)--(vii)). To what extent are the properties of Definition~\ref{defBS} necessary for this characterization to be valid?   Sheldon Axler has pointed out (private communication) that certain closed, $M_z$-invariant subspaces of Bergman spaces furnish examples relevant to this question.  For instance, Theorem 6.1 of \cite{HRS} shows that there exists a closed, $M_z$-invariant subspace $\mathcal{M}$ of $A^p(\D)$ ($p\ge 1$), such that $\dim(\mathcal{M}/(z\mathcal{M})) = \infty$.  Thus, $M_z: \mathcal{M}\to\mathcal{M}$ is not Fredholm on $\mathcal{M}$ yet $\psi(z) = z$ is bounded away from $0$ near $\partial \D$.  It is not difficult to check that $\mathcal{M}$   satisfies properties  (ii), (iii), (v), and (vi) of Definition~\ref{defBS} (with (vi) holding vacuously);  also, it is possible to show that (vii) holds: $\mathcal{M}$ has boundary-extension type $0$ in the sense that $M_{z-\zeta}: \mathcal{M}\to\mathcal{M}$ has dense range for each $\zeta\in \partial\D$.   Suppose that $B$ is a Banach space of analytic functions on $\D$ satisfying only (ii)--(v) of Definition~\ref{defBS} and that $\psi$ is a multiplier  of $B$; by the proof of Theorem~\ref{TED} (and the remark after it),  the following implication is valid:
 \begin{equation}\label{EasyD}
\psi\ \text{bounded away from}\ 0\ \text{near}\ \partial \D  \implies  M_\psi: B\to B\ \text{is Fredholm.}
\end{equation}
 Thus, the example $M_z:\mathcal{M}\to \mathcal{M}$ discussed above shows that the inclusion of (iv) in the list (ii)--(v) of properties of $B$ is necessary for the implication (\ref{EasyD}) to hold.

   To what extent can our work be generalized to multiplication operators on Banach spaces of analytic functions on planar domains $\Omega$?   In particular,  if (a) we replace $\D$ in Definition~\ref{defBS} by a bounded planar domain $\Omega$ such that no connected component of $\partial\Omega$ is equal to a point, (b) we modify the spectral-radius assumption for $M_z$ (e.g. $M_z - \lambda I$ is invertible when $\lambda\in \CP\setminus \Omega^-$), and (c) we replace $\BS$ by $\BS(\Omega)$, then are Fredholm multiplication operators on $\BS(\Omega)$ precisely those whose symbols are bounded away from $0$ near $\partial \Omega$?  The preceding question is motivated by Corollary 6 of \cite{AB}, which shows that for such domains $\Omega$, a multiplication operator on the Bergman-$p$ space $A^p(\Omega)$, $p\ge1$, is Fredholm if and only if its symbol is bounded away from $0$ near $\partial \D$.
   
   What conditions on a Banach space $B$ of analytic functions on $\Omega$ will ensure that all closed finite-codimensional, $M_z$-invariant subspaces of $B$ are singly generated?  The question of single generation (cyclicity of $M_z$ restricted to the subspace) for a specific $M_z$-invariant subspace is raised on page 817 of \cite{AB}.

\bibliographystyle{amsplain}

\end{document}